\definecolor{blue0}{RGB}{0,77,153} 
\definecolor{red0}{RGB}{179,0,77} 
\definecolor{green0}{RGB}{134,219,76} 
\definecolor{gray0}{RGB}{84,97,110}
\numberwithin{equation}{section}
\newtheorem{theorem}{Theorem}[section]
\newtheorem{proposition}[theorem]{Proposition}
\newtheorem{lemma}[theorem]{Lemma}
\newtheorem{definition}[theorem]{Definition}
\newtheorem{remark}[theorem]{Remark}
\newtheorem{example}[theorem]{Example}
\DeclareMathOperator{\Tr}{Tr}
\DeclareMathOperator{\Ker}{Ker}
\DeclareMathOperator{\diag}{diag}
\DeclareMathOperator{\Conv}{Conv}
\DeclareMathOperator{\vecop}{vec}
\newcommand{\calC}{\ensuremath{\mathcal{C}}}
\newcommand{\calD}{\ensuremath{\mathcal{D}}}
\newcommand{\calF}{\ensuremath{\mathcal{F}}}
\newcommand{\calL}{\ensuremath{\mathcal{L}}}
\newcommand{\calN}{\ensuremath{\mathcal{N}}}
\newcommand{\bbE}{\ensuremath{\mathbb{E}}}
\newcommand{\bbR}{\ensuremath{\mathbb{R}}}
\newcommand{\bbS}{\ensuremath{\mathbb{S}}}
\numberwithin{equation}{section}
\def\vs#1{\vspace{#1mm}}
\def\be{\begin{align}}
\def\ee{\end{align}}
\def\b*{\begin{eqnarray*}}
\def\e*{\end{eqnarray*}}
\def\be{\begin{eqnarray}}
\def\ee{\end{eqnarray}}
\def\beq{\begin{equation}}
\def\eeq{\end{equation}}
\def\b*{\begin{eqnarray*}}
\def\e*{\end{eqnarray*}}
\def\bi{\begin{itemize}}
\def\ei{\end{itemize}}
\def \1{{\bf 1}}
\def\eps{\varepsilon}
\def\={\;=\;}
\def\x{\times}
\def\diag#1{\mbox{\rm diag}\left[#1\right]}
 \def\reff#1{{\rm(\ref{#1})}}
 \def\vs#1{\vspace{#1mm}}
\def \F{\mathbb{F}}
\def \M{\mathbb{M}}
\def \R{\mathbb{R}}
\def\Cc{{\cal C}}
\def\Dc{{\cal D}}
\def\Lc{{\cal L}}
\def\Nc{{\cal N}}
\def\Lc{{\cal L}}
\newcommand{\inwardoutward}{%
	\begin{minipage}{.4\textwidth}
		\begin{center}
			\begin{tikzpicture}
			\draw [rounded corners] (-2,-1.7*1.7) rectangle (2,1.7*1.7);
			\draw[blue0, pattern color=lightgray, pattern=north west lines, ultra thick, domain=-1.7:1.7] plot (\x, {\x*\x});
			\draw [<->] (0,1.7*1.7)--(0,-1.7*1.7) -- (0,0)--(-2,0) -- (2,0);
			\draw[red0, ultra thick] [<-] (0,1) -- (0,0);
			\node at (-0.8,2.6) {$\{ \tilde x \geq \bar x^2\}$}; 
			\node at (0,3.3) {(i)  \textit{Affine diffusion}}; 
			\node at (0,1.2) {\textcolor{red0}{$b(0)$}}; 
			\end{tikzpicture}
		\end{center}
	\end{minipage}
	\begin{minipage}{.4\textwidth}
		\begin{center}
			\begin{tikzpicture}
			\draw [ pattern color=lightgray, pattern=north west lines,rounded corners] (-2,-1.7*1.7) rectangle (2,1.7*1.7);
			\draw[blue0, fill=white, ultra thick, domain=-1.7:1.7] plot (\x, -{\x*\x});
			\draw [<->] (0,1.7*1.7)--(0,-1.7*1.7) -- (0,0)--(-2,0) -- (2,0);
			\draw[red0, ultra thick] [<-] (0,-1) -- (0,0);
			\node at (-1,2.6) {$\{ \tilde x  \geq -\bar x^2\}$}; 
			\node at (0,3.3) {(ii)  \textit{Polynomial diffusion}}; 
			\node at (0,-1.2) {\textcolor{red0}{$b(0)$}}; 
			\end{tikzpicture}
		\end{center}
	\end{minipage}  
}%
\title{Stochastic invariance of closed sets with non-Lipschitz coefficients}
\date{\today}
\author[1,2]{Eduardo Abi Jaber \thanks{abijaber@ceremade.dauphine.fr,  {eduardo.abijaber@axa-im.com}}}
\author[1]{Bruno Bouchard\thanks{bouchard@ceremade.dauphine.fr. I would like to thank Christa Cuchiero,  {Josef} Teichmann and Damir Filipović for very fruitful discussions and comments.}}
\author[2]{Camille Illand \thanks{camille.illand@axa-im.com}}
\affil[1]{Universit\'e Paris-Dauphine, PSL  University, CNRS, CEREMADE, 75016 Paris, France.}
\affil[2]{AXA Investment Managers,  Multi Asset Client Solutions, Quantitative Research, \break
                        6 place de la Pyramide, 92908 Paris - La Défense, France.}
\begin{document}

	\maketitle

	\begin{abstract}

		This paper provides a new characterization of the  stochastic invariance of a  closed subset of $\mathbb{R}^d$ with respect to a diffusion. We extend the well-known inward pointing Stratonovich drift condition to the case where the diffusion matrix can fail to be differentiable: we only assume that the covariance matrix is.  In particular, our result can be applied to construct  affine   and   polynomial diffusions on any arbitrary closed set.\\ \\

		\noindent  {\textit{Keywords:}} Stochastic differential equation, stochastic invariance, affine diffusions, {polynomial  diffusions.}  \\ 
		
		\noindent  {\textit{MSC Classification:}}  93E03, 60H10. 
		
	\end{abstract}

\section{Introduction}
Let $b:\mathbb{R}^d \mapsto \mathbb{R}^d$ and $\sigma:\mathbb{R}^d \mapsto \mathbb{M}^d$  be continuous functions, where $ \mathbb{M}^d$  denotes the space of   $d \times d$ matrices.  We assume that $b$ and $\sigma$ satisfy the following linear growth conditions: there exists $L >0 $   such that 
\begin{equation}\label{growthconditions}
\|b(x)\| + \|{\sigma\sigma^{\top}(x)}\|^{\frac{1}{2}} \leq L (1+ \|x\|), \quad \forall\; x \in \mathbb{R}^d, \tag{$H_1$} 
\end{equation}
  and we consider a weak solution of the stochastic differential equation 
\begin{equation}\label{diffusionsdeinvariance}
dX_t=b(X_t)dt+\sigma(X_t)dW_t, \quad X_0=x,
\end{equation}
i.e.~a $d$-dimensional Brownian motion $W$ and an adapted process $X$ such that the above equation holds. 
\vs2

The aim of  this paper is to provide a characterization of  the \textit{stochastic invariance} of {a} closed set $\mathcal{D}\subset \R^{d}$,  {i.e.}~find   necessary and sufficient conditions on the instantaneous drift $b$ and the instantaneous covariance matrix $\sigma\sigma^{\top}$ under which there exists a  {weak solution} of \eqref{diffusionsdeinvariance} that remains in $\mathcal{D}$ for all $t \geq 0$, almost surely, given that $x \in \mathcal{D}$. (See Definition \ref{def: stock inv} below for a precise formulation.)\\

The first  {stochastic invariance} results can be found in {Stroock and Varadhan \cite{SV72}}, Friedman \cite{fri} and Doss \cite{dos} . Since then, many extensions were considered in the literature.  For an arbitrary closed set, the  {stochastic invariance} was characterized through the second order normal cone in Bardi and Goatin \cite{barg} and Bardi and Jensen \cite{barj}. Aubin and Doss \cite{aubd} used the notion of  curvature, while Da Prato and Frankowska \cite{da} provided a characterization in terms of the Stratonovich drift. For a closed convex set, the distance function was used in Da Prato and Frankowska \cite{daf1},  and the invariance was characterized for  {affine jump-diffusions} in Tappe \cite{tap}.

Although these approaches  {differ}, they have at least one thing in common: the tradeoff one has to make between the assumptions on the topology/smoothness of the domain and the regularity of the coefficients $b$ and $\sigma$. This makes all of these existing results difficult to apply in practice. Let us start by highlighting this difficulty through the two main contributions to  the literature:
\begin{enumerate}[(i)]
	\item 
	In Bardi and Jensen \cite{barj}{,} the {stochastic invariance} is characterized by using Nagumo-type geometric conditions on the second order normal cone. Their main result states that the closed set $\mathcal{D}$ is  {stochastically invariant} if and only if
 \begin{equation*}\label{eqbardi}
 u^\top b(x) + \frac{1}{2} \Tr(vC(x)) \leq 0, \;\forall\; x \in \mathcal{D} \mbox{ and } (u,v) \in \mathcal{N}^2_{D}(x), 
 \end{equation*}
 in which {$C:=\sigma\sigma^{\top}$} on $\Dc$ and $ \mathcal{N}^2_{D}(x)$ is the second order normal cone at the point $x$: 
\begin{equation}\label{eq: def second order cone}	
\mathcal{N}^2_{\mathcal{D}}(x) :=\left\{(u,v) \in \mathbb{R}^d \times \mathbb{S}^d: \langle u, y-x \rangle + \frac{1}{2} \langle y-x , v (y-x) \rangle\leq o(\|y-x\|^2), {\forall\; y \in \calD} \right\}.
\end{equation}
Here, $\mathbb{S}^d$ {stands for} the cone of symmetric {$d\times d$} matrices. 
In practice, we  face  two restrictions. Prior to deriving the conditions on $b$ and $\sigma$, we have to determine the second order normal cone at all points of a given set. When the boundary is smooth, the computation of the second order normal cone is an easy task, see e.g.~\cite[Example 1]{barj}. However, it is much more challenging in general, by lack of efficient techniques. 
This renders the result of \cite{barj}   difficult to use in practice.  {This also corresponds to the positive maximum principle of  Ethier and  Kurtz \cite{eth}.} 
\item
Building on   Doss \cite{dos},  Da Prato and Frankowska \cite{da} {give} necessary and sufficient conditions for  the  stochastic invariance in terms of  the Stratonovich drift and the first order normal cone:
	\begin{equation}\label{dapratocondintro}
	\sigma(x)^\top u =0   \mbox{ and }  
	\langle u, b(x)-\frac{1}{2} \sum_{j=1}^{d} D\sigma^j(x)\sigma^j(x)  \rangle \leq 0, \;\forall\; x \in \mathcal{D} \mbox{ and }u \in \mathcal{N}^1_{\mathcal{D}}(x),
	\end{equation}	  
	where $\sigma^j(x)$ denotes the $j$-th column of the matrix $\sigma(x)$, $D\sigma^j$ is the Jacobian of $\sigma^j$, and the first order normal cone   $\mathcal{N}^1_{\mathcal{D}}(x)$ at $x$ (sometimes simply called \emph{normal cone}) is defined as
\begin{equation}\label{eq: def first order cone}
		\mathcal{N}^1_{\mathcal{D}}(x) :=\left\{u \in \mathbb{R}^d: \langle u, y-x \rangle \leq o(\|y-x\|), {\forall\; y \in \calD} \right\}.
\end{equation}
{In practice, the first order normal  cone is much  simpler to compute than the second order {cone} used in \cite{barj}}, see \cite{aubf}  and \cite{roc}. However, the price to pay is to impose a strong regularity {condition} on the diffusion matrix $\sigma$, which is assumed to be bounded and differentiable on $\mathbb{R}^d$, with a bounded Lipschitz derivative. Again, this   constitutes a sticking point for  applications, it {cannot be applied} to simple models (think about {square-root} processes for instance, see below).
\end{enumerate}

The aim of the present paper is   to extend the characterization \eqref{dapratocondintro}, given in terms of the \emph{easy-to-compute}  first order normal cone,   under weaker regularity conditions on the diffusion matrix $\sigma$. We make the following seemingly {trivial} observation: ${C:=\sigma \sigma^\top}$ might be differentiable at a point $x$ {while} $\sigma$ is not. It is the case for the {square-root process  mentioned above,} {at the boundary point $x=0$}. Moreover, the terms $ D\sigma^j(x)\sigma^j(x)$ can be rewritten in terms of   the Jacobian of $C$ whenever both quantities are well defined, see Proposition \ref{propequivalencedaprato} for a precise formulation. {This suggests to reformulate \eqref{dapratocondintro} with the Jacobian matrices of the columns of $C$ instead of $\sigma$.} \vs2

We prove that this is actually possible. Our main result, Theorem \ref{MainTheorem} below, states that the  {stochastic invariance} is   equivalent to the following conditions: 
	\begin{equation}\label{eq: nec suff cond intro}
	C(x) u =0   \mbox{ and }  
	\langle  u, b(x)-\frac{1}{2} \sum_{j=1}^{d} D C^j(x)(CC^+)^j(x)    \rangle \leq 0, \;\forall\; x \in \mathcal{D} \mbox{ and } u \in \mathcal{N}^1_{\mathcal{D}}(x).
	\end{equation}
 Here,  $(CC^+)^j(x)$ is the $j$-th column of $(CC^{+})(x)$ with $C(x)^+$ defined as the Moore-Penrose pseudoinverse of $C(x)$, see Definition \ref{defpseudoinverse} in the Appendix.  {We    only assume that 
\begin{equation} \label{eq: extension C}
\mbox{$C$ can be extended to a $\mathcal{C}^{1,1}_{loc}(\mathbb{R}^d,\mathbb{S}^d)$ function that coincides with $\sigma\sigma^{\top}$ on $\Dc$,}\tag{$H_2$}
\end{equation}
in which $\mathcal{C}^{1,1}_{loc}$ means $\Cc^{1}$ with {a} locally Lipschitz {derivative}. Note that we do not impose the extension of $C$ to be positive semi-definite outside $\Dc$, so that $\sigma$ might only match with its square-root on $\Dc$. Also, it should be clear that the extension {needs} only to be local around $\Dc$.}
\vs2 

The term $CC^+$ in \eqref{eq: nec suff cond intro} plays the role of {the} projection on the image of $C$, see Proposition \ref{proppseudoinverseproj} in the Appendix and the discussion in Remark \ref{rem : projection} below. This projection term {cannot} be removed. To see this,  let us consider the square-root process with  $C(x)=x$ and   $\mathcal{D}={\mathbb R}_{+}$, so that  {  $\mathcal{N}^1_{\mathcal{D}}(0)=\R_{-}$}. Then, 
 	\begin{equation*}
	C(0) (-1) =0 \quad \mbox{ and } \quad
	\langle  -1, b(0)-\frac{1}{2} D C(0)    \rangle \leq 0 
	\end{equation*}
	leads to $b(0)\ge 1/2$ while the correct condition {for invariance}  is $b(0)\ge 0$, which is recovered from \eqref{eq: nec suff cond intro} by using the fact that $(CC^{+})(0)=0$. 
	\\

This extension of the characterization of Da Prato and Frankowska \cite{da} provides for the first time a unified criteria for the case where the volatility matrix may not be $\Cc^{1}$ on the whole domain, which is of importance in practical situations. {In fact,} many models used in practice, in mathematical finance {for instance}, do not have $\Cc^{1}$ volatility maps but satisfy our conditions.  {This is in particular the case of   {affine diffusions}   ({see \cite{dfs,film}}), or of {polynomial  diffusions} that are characterized by a quadratic covariance matrix ({see \cite{cuchkr,fla}}), etc.}  When applied to such processes,  {stochastic invariance} results have  been so far tweaked  in order to fit in the previous set up, or {have been proved} under limiting conditions, on a case by case basis. For instance, in their construction of  {affine processes} on the cone of symmetric semi-definite matrices,    Cuchiero \textit{et al.}~\cite{cuchf} {start by regularizing} the martingale problem {before applying} the  {stochastic invariance characterization} of \cite{da} and then pass to the limit. In Spreij and Veerman \cite{sp12}, some  {stochastic invariance} results are also derived for  {affine diffusions} but only on  convex sets with smooth boundary. {More recently}, the mathematical foundation for  {polynomial  diffusions} is given in  Filipović and Larsson \cite{fla}. Necessary conditions for the  {stochastic invariance} are derived for basic closed semialgebraic sets. However, these conditions are not sharp,  their sufficient conditions differ from their necessary conditions. All the above cases can now be treated by using our characterization. See Section \ref{SectionExamples} for a generic  example.    
 \vs2

Our proof of the necessary condition is in  the spirit of Buckdahn \textit{et al.}~\cite{buc}. They use a second order stochastic Taylor expansion together with small time behavior results for double stochastic integrals. {However, in our case,} the stochastic Taylor expansion {cannot} be applied directly since $\sigma$ is not differentiable and $\sigma(X)$ fails to be a semi-martingale whenever an eigenvalue {vanishes} (see \cite[Example 1.2]{mij}). We therefore need to develop new ideas.  We first observe that, if $\sigma$ is diagonal, then vanishing eigenvalues can   be eliminated {by taking the} conditional expectation with respect to the path of the Brownian motion acting on the {non-vanishing} ones.   This corresponds to the projection term $CC^{+}$ in \eqref{eq: nec suff cond intro}.      If $\sigma$ is not diagonal, we can essentially reduce to the former case by considering its spectral decomposition and a suitable change of Brownian motion (based on the corresponding basis change), see Lemma \ref{lemmadistincteigen} below. {However, it requires}  a smooth spectral decomposition which is not guaranteed when repeated eigenvalues are present. To avoid this, we need an additional transformation of the {state} space, see Proposition \ref{propnecessity}. 
\vspace{2mm}

Conversely, we show that   the infinitesimal {generator} of our diffusion satisfies the \textit{positive maximum principle}  whenever \eqref{eq: nec suff cond intro} holds, see Section \ref{SectionSuf} below. {Applying    \cite[Theorem 4.5.4]{eth}} {shows} that this condition is indeed sufficient. (Note that the approach based on the comparison principle for viscosity solutions used in \cite{barj,buc} {cannot} be applied to our case since $\sigma$ is not Lipschitz.)
\\

The rest of the paper is organized as follows. Our main result is stated in Section \ref{SectionMain}. The proofs are collected in Sections \ref{SectionNec} and \ref{SectionSuf}.
In Section \ref{SectionExamples}, we exemplify {our characterization} by deriving explicit  stochastic invariance conditions for  various typical examples of application{s}. Finally, Section \ref{sectionboundarynon} provides a complementary  tractable sufficient {condition} ensuring {the stochastic invariance of the interior of a domain}.  
For the convenience of the reader, we collect some standard {results} of matrix calculus and differentiation in the Appendix. 
\vs2

From now on, all identities involving random variables have to be considered in the a.s.~sense, the probability space and the probability measure being given by the context. Elements of $\R^{d}$ are viewed as column vectors. The vector $e_{i}\in \R^{d}$ is the $i$-th element of the canonical basis, and we use the standard notation $I_{d}$ to denote the $d\times d$ identity matrix.  We denote by $\M^{d}$ the collection of $d\times d$ matrices. We say that $A\in \mathbb{S}^{d}$ (resp.~$\mathbb{S}^{d}_{+}$) if it is a symmetric (resp.~and positive semi-definite) element of $\M^{d}$.
Given $x=(x^{1},\ldots,x^{d})\in \R^{d}$, $\diag{x}$ denotes the diagonal matrix whose $i$-th diagonal component is $x^{i}$.  If $A$ is a symmetric positive semi-definite matrix, then $A^{\frac12}$ stands for its symmetric square-root.


\section{Main result}\label{SectionMain}

In this section, we state our main result, Theorem \ref{MainTheorem}, that extends Theorem 4.1 in Da Prato and Frankowska \cite{da} to weaker regularity assumptions. 

\vs2
Since we are dealing with general coefficients $b$ and $\sigma$,  {i.e.}~not necessarily Lipschitz coefficients, solutions to the  {stochastic differential equation} \eqref{diffusionsdeinvariance} should be considered in the  {weak} sense rather than in the  {strong} sense. Existence is guaranteed by our condition \eqref{growthconditions}, together with our standing assumption of  continuity of $b$ and  $\sigma$:  there exist a filtered probability space $(\Omega,\mathcal{F},{\F=}(\mathcal{F}_t)_{t \geq 0},\mathbb{P})$ satisfying the usual conditions, a $d$-dimensional $\F$-Brownian motion $W$ and a ${\F}$-adapted process $X$ with continuous sample paths such that   \eqref{diffusionsdeinvariance} holds $\mathbb{P-}$a.s. See e.g.~\cite[Theorems IV.2.3 and IV.2.4]{ike}. 
 
For later use, note that  \eqref{growthconditions} implies that, for any positive integer $p$, there exists $K_{p,x} > 0$ such that
\begin{equation}\label{holdertrajectories}
\mathbb{E}\left[ \|X_t-X_s\|^p \right] \leq K_{p,x} |t-s|^{\frac{p}{2}}
\end{equation}
 for all $0\le  s,t\le  1$. Hence, Kolmogorov's continuity criterion ensures that the sample paths of $X$ are (locally) $\eta$-H\"older continuous for any $\eta \in (0,\frac{1}{2})$ (up to considering a suitable modification).

\begin{remark}\label{rmkchoiceofsigma}
	The collection $\cal Q$ of possible distributions of $X$ is entirely determined by the infinitesimal generator $\mathcal{L}$ defined on the space of smooth functions $\phi$ by 
	$
	\Lc \phi:=D\phi \,b + \frac12 {\rm Tr}[\sigma\sigma^\top D^{2}\phi].
	$
Therefore, $\cal Q$ is the same if $\sigma$ is replaced by  $\tilde \sigma$ such that $\tilde \sigma\tilde \sigma^{\top}=\sigma\sigma^\top$, {see e.g.}~\cite[Remark 5.1.7]{str}.  Hence, we can reduce   to the case where $\sigma$ is the  {symmetric} square-root of $C$ on $\Dc$, which we will assume from now on.
\end{remark}
 
  Before   stating our main result, let us make precise the definition of stochastic invariance. 

\begin{definition}[\textit{Stochastic invariance}]\label{def: stock inv} A closed subset $\mathcal{D} \subset \mathbb{R}^d$ is said to be \textit{stochastically invariant} with respect to the diffusion \eqref{diffusionsdeinvariance} if, for all $x \in \mathcal{D}$, there exists a weak solution $(X,W)$ to \eqref{diffusionsdeinvariance} starting at $X_0=x$ such that $X_t \in \mathcal{D}$ for all $t\geq0$, almost surely. 
\end{definition}

Our characterization of stochastic invariance reads as follows (see Propositions \ref{propnecessity}   and   \ref{propsufficiency} below for the proof).  {From now on we use the same notation $C$ for $C$ defined as $\sigma\sigma^{\top}$ on $\Dc$ and for its extension defined in Assumption \eqref{eq: extension C}. }

\begin{theorem}[Invariance characterization]\label{MainTheorem}
	Let  $\mathcal{D}$ be closed. Assume that $b${, $\sigma$} and $C$ are continuous and satisfy assumptions \eqref{growthconditions}-\eqref{eq: extension C}.  Then, the set $\mathcal{D}$ is \textit{stochastically invariant} with respect to the diffusion \eqref{diffusionsdeinvariance} if and only if 
	\begin{subnumcases}{}
	C(x) u =0 \label{ourcondcov}
	\\
	\langle  u, b(x)-\frac{1}{2} \sum_{j=1}^{d} D C^j(x)(CC^+)^j(x)    \rangle \leq 0 \label{ourconddrift}
	\end{subnumcases}
	 for every $x \in \mathcal{D}$ and for all $u \in \mathcal{N}^1_{\mathcal{D}}(x)$.  
\end{theorem}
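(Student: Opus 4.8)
The plan is to prove the two implications separately, exploiting the structure laid out in the introduction. For the necessity direction, I would follow the Buckdahn et al. strategy via a second-order stochastic Taylor expansion, but first reduce to a tractable setting where the non-differentiability of $\sigma$ can be circumvented. Concretely, take $u \in \mathcal{N}^1_{\mathcal{D}}(x)$ and a weak solution $X$ staying in $\mathcal{D}$. The invariance gives $\langle u, X_t - x\rangle \le o(\|X_t - x\|^2)$ in expectation, so I would expand $\mathbb{E}[\langle u, X_t - x\rangle]$ to second order in $t$. The drift contributes $t\,\langle u, b(x)\rangle$, while the martingale part, after Itô's formula and taking conditional expectations, produces the correction term. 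The key technical device (flagged in the introduction) is that, because $C = \sigma\sigma^\top$ is $\mathcal{C}^{1,1}_{loc}$ even where $\sigma$ is not, one should work with $C$ rather than $\sigma$: I would first diagonalize $\sigma(x)$ via its spectral decomposition and perform a change of Brownian motion so that the diffusion is effectively diagonal, then take conditional expectation with respect to the Brownian path acting on the non-vanishing eigenvalues. This annihilates the components along vanishing eigenvalues and is precisely what conjures the projection $CC^+$ in the drift correction, recovering \eqref{ourconddrift}; the covariance condition \eqref{ourcondcov} comes out of the requirement that the $O(\sqrt{t})$ fluctuations along $u$ vanish, i.e. $\sigma(x)^\top u = 0$, equivalently $C(x)u = 0$. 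The main obstacle here is handling repeated eigenvalues, where a smooth spectral decomposition fails; I expect to resolve this by the additional state-space transformation invoked in Proposition \ref{propnecessity}, together with Lemma \ref{lemmadistincteigen}.

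For the sufficiency direction, I would abandon the Taylor-expansion/viscosity approach (which requires Lipschitz $\sigma$) and instead verify the \emph{positive maximum principle} for the generator $\mathcal{L}\phi = D\phi\, b + \frac12 \Tr[C\, D^2\phi]$, then invoke \cite[Theorem 4.5.4]{eth} to conclude the existence of a $\mathcal{D}$-valued solution. The plan is: assume \eqref{ourcondcov}--\eqref{ourconddrift} and let $\phi \in \mathcal{C}^2$ attain a nonnegative maximum over $\mathcal{D}$ at a point $x_0 \in \mathcal{D}$; I must show $\mathcal{L}\phi(x_0) \le 0$. The first-order optimality condition forces $D\phi(x_0) \in -\mathcal{N}^1_{\mathcal{D}}(x_0)$, and a second-order analysis along admissible directions, combined with $C(x_0)\,D\phi(x_0) = 0$ from \eqref{ourcondcov}, should let me relate $\Tr[C\,D^2\phi]$ to the drift correction term appearing in \eqref{ourconddrift}. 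The delicate point is decomposing $\Tr[C(x_0) D^2\phi(x_0)]$ so that the contribution orthogonal to the kernel of $C(x_0)$ is controlled by the geometry of $\mathcal{D}$; here the identity rewriting $\sum_j DC^j (CC^+)^j$ in terms of the Stratonovich correction (Proposition \ref{propequivalencedaprato}) is what makes the bookkeeping match \eqref{ourconddrift}.

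Throughout, I would lean on the reduction in Remark \ref{rmkchoiceofsigma} to assume $\sigma$ is the symmetric square-root of $C$ on $\mathcal{D}$, on the Hölder estimate \eqref{holdertrajectories} to justify the small-time expansions and control the remainder terms, and on the pseudoinverse facts from the Appendix (Definition \ref{defpseudoinverse}, Proposition \ref{proppseudoinverseproj}) to interpret $CC^+$ as the orthogonal projection onto the image of $C$. The single hardest step, in my estimation, is the necessity argument at points where $\sigma$ degenerates with repeated vanishing eigenvalues: one must simultaneously keep the spectral transformation regular enough to differentiate and sharp enough that the conditional-expectation trick isolates exactly the projected drift, without spurious contributions from the non-semimartingale behavior of $\sigma(X)$ noted in \cite[Example 1.2]{mij}.
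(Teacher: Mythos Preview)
Your necessity outline is essentially the paper's approach: reduce to proximal normals, build a test function $\phi$ with $D\phi(x)=u^\top$, apply Itô, diagonalize via the spectral decomposition of $C(x)$, change Brownian motion, and condition on the components of $B$ corresponding to nonzero eigenvalues to produce the $CC^+$ projection (Lemma~\ref{lemmadistincteigen}); repeated eigenvalues are then separated by the linear perturbation $A^\epsilon$ of Proposition~\ref{propnecessity}. One imprecision: working with the full expectation $\mathbb{E}[\langle u, X_t-x\rangle]$ kills the martingale part outright and yields only $\langle u,b(x)\rangle\le 0$, not the sharper \eqref{ourconddrift}. The paper instead works pathwise from $\phi(X_t)\le\phi(x)$ and applies the small-time estimates of Lemma~\ref{lemmadoubleintsto} to the \emph{conditionally}-expected Itô expansion; it is the double stochastic integral, analyzed pathwise, that produces the $DC^j(CC^+)^j$ correction.

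Your sufficiency plan has the right architecture (positive maximum principle, then Ethier--Kurtz), but the step ``a second-order analysis along admissible directions \ldots\ should let me relate $\Tr[C\,D^2\phi]$ to the drift correction'' is the crux and you have not identified the mechanism. The paper's device (Proposition~\ref{propmaximum}) is to run, for each $j$, the \emph{deterministic} control system $y'(t)=C(y(t))\sigma(x)^+e_j$, $y(0)=x$: since $C$ is locally Lipschitz and satisfies \eqref{ourcondcov}, \cite[Proposition~2.5]{da} makes $\mathcal{D}$ invariant for this flow, so $y(\sqrt h)\in\mathcal{D}$ and the second-order normal-cone inequality applied to $y(\sqrt h)$ yields, after a Taylor expansion and summation over $j$, exactly
\[
\Tr\bigl[(I_d\otimes u^\top)DC(x)\,C(x)C(x)^+\bigr]+\Tr\bigl[vC(x)\bigr]\le 0.
\]
Combined with \eqref{ourconddrift} (via the trace identity \eqref{eq: trace prod vec}) this gives $\langle u,b(x)\rangle+\tfrac12\Tr[vC(x)]\le 0$ for all $(u,v)\in\mathcal N^2_{\mathcal D}(x)$, which is the positive maximum principle. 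Your appeal to Proposition~\ref{propequivalencedaprato} does not substitute for this step: that proposition assumes $\sigma\in\mathcal C^{1,1}_b$, which is precisely the regularity the theorem dispenses with. (A minor sign slip as well: at a maximum of $\phi$ over $\mathcal D$ one has $D\phi(x_0)^\top\in\mathcal N^1_{\mathcal D}(x_0)$, not its negative.)
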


Clearly, the regularity conditions of Theorem \ref{MainTheorem} are much weaker than those of Theorem 4.1 in Da Prato and Frankowska \cite{da}.   Let us immediately exemplify this by considering  the case of the square-root process {already mentioned} in the introduction. Let $\mathcal{D}=\mathbb{R}_+$, $C(x)= \eta^2 x$ with $\eta > 0$, and consider the   diffusion  $dX_t= b(X_t) dt +  \eta \sqrt{X_t}dW_t$. Since $C(x)C(x)^+=\mathds{1}_{\{ x > 0 \}}$ and $\mathcal{N}^1_{\mathbb{R_+}}{(x)}=\mathds{1}_{\{x=0\}}\R_{-}$, Theorem \ref{MainTheorem} implies that $\R_{+}$ is stochastically invariant   if  and only if $b(0)\geq 0$, {while} $\sigma: x\in \R_{+}\mapsto \eta\sqrt{x}$ is not differentiable at $0$.

  On the other hand, one can easily recover \cite[Theorem 4.1]{da} under their smoothness assumptions. {This is the object of the next proposition (recall that, by Remark \ref{rmkchoiceofsigma}, the study can be reduced to the case $C=\sigma^2$ on $\Dc$)}.

\begin{proposition}\label{propequivalencedaprato}
	Fix  $\sigma \in \mathcal{C}^{1,1}_b(\mathbb{R}^d, {\mathbb{S}^d})$ (\textit{i.e.}~$\sigma$ is  differentiable with a bounded and a globally Lipschitz derivative). Then $C:=\sigma^{2} \in \mathcal{C}^{1,1}_{loc}(\mathbb{R}^d, {\mathbb{S}^d_{+}})$ and  
	\begin{equation*}
 \langle u, \sum_{j=1}^{d} D \sigma^j(x)\sigma^j(x) \rangle = \langle u, \sum_{j=1}^{d} D C^j(x)(CC^+)^j(x) \rangle, \quad \mbox{for all } x  \in \mathcal{D} \mbox{ and  } u \in \Ker\sigma(x).
	\end{equation*}
\end{proposition}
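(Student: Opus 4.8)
The plan is to treat the two assertions separately. The regularity claim $C=\sigma^2\in\mathcal{C}^{1,1}_{loc}(\mathbb{R}^d,\mathbb{S}^d_+)$ is the easy part: since matrix multiplication is bilinear and $\sigma\in\mathcal{C}^{1,1}_b$, the product $C=\sigma\sigma$ is $\mathcal{C}^1$ with $DC$ given by the Leibniz rule, and as a product of bounded functions with bounded Lipschitz derivatives its derivative is (locally) Lipschitz; moreover $C=\sigma^2$ is symmetric positive semi-definite because $\sigma$ is symmetric, so $C$ takes values in $\mathbb{S}^d_+$.

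For the identity, I would first record the structural facts that do all the work. Since $\sigma(x)$ is symmetric, $\Ker\sigma(x)=\Ker C(x)$ and $\mathrm{range}(\sigma(x))=\mathrm{range}(C(x))$. By Proposition \ref{proppseudoinverseproj}, $P(x):=C(x)C(x)^+$ is the orthogonal projection onto $\mathrm{range}(C(x))=\mathrm{range}(\sigma(x))$; in particular $P(x)$ is symmetric and, because every column of $\sigma(x)$ lies in $\mathrm{range}(\sigma(x))$, we have $P(x)\sigma(x)=\sigma(x)$. Finally, for $u\in\Ker\sigma(x)$ symmetry gives $u^\top\sigma(x)=(\sigma(x)u)^\top=0$.

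Then I would expand both sides in coordinates. Writing $C_{kj}=\sum_m\sigma_{km}\sigma_{mj}$, the product rule gives $\partial_i C_{kj}=\sum_m(\partial_i\sigma_{km})\sigma_{mj}+\sum_m\sigma_{km}(\partial_i\sigma_{mj})$. Contracting $\sum_j (DC^j)(CC^+)^j$ against $u$ thus produces two terms. In the second term the factor $\sum_k u_k\sigma_{km}=(\sigma(x)u)_m$ vanishes, so that term drops out. In the first term the contraction over $j$ is $\sum_j\sigma_{mj}P_{ij}=(P\sigma)_{im}=\sigma_{im}$ by $P\sigma=\sigma$, and what remains is exactly $\sum_{k,i,m}u_k(\partial_i\sigma_{km})\sigma_{im}=\langle u,\sum_{j=1}^d D\sigma^j(x)\sigma^j(x)\rangle$, i.e.\ the left-hand side, which closes the argument.

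The subtle point to stress is that the whole mechanism depends on the projection $CC^+$ being \emph{evaluated, not differentiated}, in the statement: hence the product rule generates only the two $\partial\sigma$-terms above, and the pair of facts $u^\top\sigma=0$ (kills one term) together with $P\sigma=\sigma$ (collapses the other) is precisely what recovers the Da Prato--Frankowska expression from \cite{da}. I expect the only delicate step to be the careful justification of $\mathrm{range}(C)=\mathrm{range}(\sigma)$ and $P\sigma=\sigma$, since it is exactly there that the symmetry of $\sigma$ and the projection interpretation of $CC^+$ are used; everything else is a routine index bookkeeping.
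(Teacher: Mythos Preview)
Your proof is correct and follows essentially the same approach as the paper: both apply the product rule to $C=\sigma^2$, use $u^\top\sigma(x)=0$ to kill one of the two resulting terms, and use $C(x)C(x)^+\sigma(x)=\sigma(x)$ (your $P\sigma=\sigma$) to collapse the other into the Da Prato--Frankowska expression. The only difference is notational: the paper casts the computation in Kronecker-product/trace form (via Propositions~\ref{propappendixformulas} and~\ref{propdifferentiationrules}), while you carry out the equivalent index bookkeeping in coordinates.
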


\begin{proof}  	Fix $x \in \mathcal{D}$ and $u \in \Ker \sigma(x)$.  	
	By using Definition \ref{defjacobianmatrix} and Proposition \ref{propdifferentiationrules} in the Appendix,  we first compute that   
	\begin{equation*}
	D C(x)=D (\sigma(x)^{2}) =  (\sigma(x)\otimes I_d  )D \sigma(x)+ (I_d \otimes \sigma(x))D \sigma(x),
	\end{equation*}
	which clearly shows that $C$ is $\mathcal{C}^{1,1}_{loc}$. It then follows from Proposition \ref{propappendixformulas} and  the fact that $ u \in \Ker\sigma(x)$    that  	
	\begin{align*}
	 (I_d \otimes u^\top)D C(x)C(x)C(x)^+ &=   (\sigma(x) \otimes u^\top)D \sigma(x) C(x)C(x)^+ .
	\end{align*}
Observe now that  $C(x)C(x)^+\sigma(x) = \sigma(x)$ since  $C(x)= \sigma(x)^{2}$ (use the spectral decomposition of $\sigma$ as in Proposition \ref{proppseudoinversespectral}). Using  Proposition \ref{propappendixformulas} again, the above implies that 
	\begin{align*}
	\Tr\left[(I_d \otimes u^\top)D C(x)C(x)C(x)^+\right]&= \Tr\left[\sigma(x)( {I_d}\otimes u^\top)D \sigma(x)C(x)C(x)^+ \right]\\
	&=\Tr\left[( {I_d}\otimes u^\top)D \sigma(x) \sigma(x)\right].
	\end{align*}
	Then, by   Proposition   \ref{propappendixformulas} and \ref{propdifferentiationrules},   
	\begin{eqnarray}
	 \langle u, \sum_{j=1}^{d} D \sigma^j(x)\sigma^j(x) \rangle &=& 
	 \sum_{j=1}^{d} u^\top  D (\sigma(x)e_j)\sigma(x)e_j \nonumber\\
	 &=& \sum_{j=1}^{d} u^\top   {(e_j^\top\otimes I_{d} )}D \sigma(x)\sigma(x)e_j \nonumber\\
	  &=& \sum_{j=1}^{d}  e_j^\top (I_d \otimes u^\top)D \sigma(x)\sigma(x)e_j \nonumber\\
	 &=& \Tr \left[ (I_d \otimes u^\top)D \sigma(x)\sigma(x)\right] \nonumber\\
	 &=& 	\Tr\left[(I_d \otimes u^\top)D C(x)C(x)C(x)^+\right] \nonumber\\
	 &=& \langle u, \sum_{j=1}^{d} D C^j(x)(CC^+)^j(x) \rangle,\label{eq: trace prod vec}
	\end{eqnarray}
	in which the last identity follows by reproducing exactly the same computations in the reverse order with $C$ in place of $\sigma$. 
\end{proof}

 The following provides another formulation of   \eqref{ourconddrift} that highlights the notion of projection on the image of $C$. 

\begin{remark}[Interpretation of the projection formulation]\label{rem : projection}
Fix $x \in \partial\mathcal{D}$ and assume that the spectral decomposition of $C$ at $x$ takes the form $C(x)=Q(x)\diag{\lambda_1(x),\ldots,\lambda_r(x),0,\ldots,0}Q(x)^\top$, where $Q(x)Q(x)^\top=I_d$ and $\lambda_j(x) >0$ for all $1 \leq j \leq r$. Hence, the $r$-first columns of $Q(x)$, denoted by $(q_1,\ldots,q_r)=(q_1(x),\ldots,q_r(x))$, span the image of $C(x)$ and the projection matrix on the image of $C(x)$ is given by $C(x)C(x)^+=\sum_{j=1}^{r} q_jq_j^\top$, see Propositions \ref{proppseudoinverseproj} and \ref{proppseudoinversespectral} in the Appendix {and recall that $q_{j}$ is a column vector}.  Thus,  by  \eqref{eq: trace prod vec} in the proof of Proposition \ref{propequivalencedaprato} and Proposition   \ref{propappendixformulas} in the Appendix, 
\begin{eqnarray*}
\langle u, \sum_{j=1}^{d} D C^j(x)(CC^+)^j(x) \rangle &=& 	\Tr\left[(I_d \otimes u^\top)D C(x)C(x)C(x)^+\right]\\
&=& \sum_{j=1}^{r} \Tr\left[(I_d \otimes u^\top)D C(x)q_jq_j^\top\right]  \\
&=& \sum_{j=1}^{r} \Tr\left[q_j^\top(I_d \otimes u^\top)D C(x)q_j\right] \\
&=& \sum_{j=1}^{r}  u^\top(q_j^\top\otimes I_d )D C(x)q_j
\end{eqnarray*}
so that, by Proposition \ref{propdifferentiationrules}, 
\begin{eqnarray*}
\langle u, \sum_{j=1}^{d} D C^j(x)(CC^+)^j(x) \rangle  &=& \langle u , \sum_{j=1}^{r} D(Cq_j)(x)q_{j} \rangle= \langle u , \sum_{j=1}^{r} D_{q_j}(Cq_j)(x) \rangle
\end{eqnarray*}
in which $D_{q_j}$ is the directional derivative with respect to $q_j$:
\begin{equation*}
D_{q_j}(Cq_j)(x):=\displaystyle\lim_{t \rightarrow 0} \frac{C(x+t q_j)q_j- C(x)q_j}{t}.
\end{equation*}
Therefore   \eqref{ourconddrift} reads $\langle u , b(x) - \frac{1}{2} \sum_{j=1}^{r} D_{q_j}(Cq_j)(x)\rangle \le 0$. Otherwise stated, $C$ is first projected onto the basis of the image of $C(x)$ before being derived only in the directions of $(q_1,...,q_r)$. This is clearly consistent with   \eqref{ourcondcov} {that states} that there {cannot} be any transverse diffusion of $C(x)$ to the boundary. Therefore, the drift $b(x)$ should only compensate the tangential diffusion given by the projection onto the image of $C(x)$ in order to keep the diffusion in the domain. 
\end{remark}

{Let us conclude this section with an additional comment for the jump-diffusion case.  }
{\begin{remark}[Adding jumps]
		Note that jumps could be included in the dynamics of $X$. Based on the current work, we provide in \cite{aj16} an extension of the first order characterization of Theorem \ref{MainTheorem} to the jump-diffusion case. We
		also derive an equivalent formulation in the semimartingale framework.
\end{remark}}

\section{Necessary {conditions}}\label{SectionNec}

In this section, we prove  that the {conditions} of Theorem \ref{MainTheorem} {are} necessary for $\Dc$ to be invariant. 

Our general strategy is {similar to \cite{buc}}. { We fix $x \in \Dc$ and} we consider a smooth function  $\phi:\mathbb{R}^d \mapsto \mathbb{R}$ such that {$\displaystyle\max_{ \mathcal{D}}\phi=\phi(x)$}. Since $\mathcal{D}$ is  {stochastically invariant}, {let $X$ be a $\Dc$-valued solution starting from $X_0=x$. In particular,} $\phi(X_t) \leq \phi(x)$,   for all $t \geq 0$. Then, {if $\sigma$ is sufficiently smooth}, by applying It\^{o}'s Lemma twice, we obtain
\begin{equation*}\label{eqdoublestochasticintro}
 			\int_{0}^{t} \mathcal{L}\phi(X_s)ds + \int_{0}^{t} \left(  D\phi \sigma(x) + \int_0^s \mathcal{L}(D\phi\sigma)(X_r) dr + \int_0^s D ( D\phi{\sigma})\sigma(X_r) dW_r\right)^\top dW_s \leq 0.
 		\end{equation*}
		{Recall Remark \ref{rmkchoiceofsigma} for the definition of the infinitesimal generator $\Lc$.}
Given    (now standard) estimates on the small time behavior of single and double stochastic integrals, see e.g.~\cite{buc,cst05a}, this readily implies 
		\begin{equation*} 
 		D\phi(x)\sigma(x)=0\quad \mbox{ and } \quad \langle D\phi(x), b(x) - \frac{1}{2}\sum_{j=1}^{d} D  \sigma^j(x)\sigma^j(x) \rangle \leq 0,
 		\end{equation*}
		under appropriate regularity conditions. 
		It remains to choose a suitable test function $\phi$, i.e.~such that $ D\phi(x)=u{^\top}$,  to deduce that 
 \eqref{ourcondcov}-\eqref{ourconddrift}	must hold {when $\sigma$ is differentiable}, recall Proposition \ref{propequivalencedaprato}. 
 
 In our setting, one can however not differentiate $\sigma^{j}$ in general. To surround this problem the above can be rewritten in term of the covariance matrix $C$. The projection term in  \eqref{ourcondcov}-\eqref{ourconddrift} will appear through a conditioning argument. 
 
 In order to separate the difficulties, we shall first consider the case where {$C$ admits} a locally smooth spectral decomposition. The general case will be handled in Section \ref{Sec: change state space} below.
	
\subsection{The case of distinct eigenvalues}

As mentioned above, we shall first make profit of  having distinct eigenvalues before considering the general case. The main idea consists in using the spectral decomposition of $C$ in the form $Q\Lambda Q^{\top}$ in which $Q$ is an orthogonal matrix and $\Lambda$ is diagonal positive semi-definite. Then,  the dynamics of $X$ can be written as 
$$
dX_{t}=b(X_{t})dt+Q(X_{t}) {\Lambda(X_{t})}^{\frac12}dB_{t}
$$
in which $B=\int_{0}^{\cdot} Q(X_{s})^{\top}dW_{s}$ is a Brownian motion. If $Q$ and $\Lambda$ are smooth enough, then we can apply the same ideas as the one exposed at the beginning of this section. An additional locali{z}ation and conditioning argument will {allow} us to reduce to the case where $\Lambda$ has only (strictly) positive entries. 

{Note that eigenvalues and the eigenvectors can always be  chosen   measurable. However, multiple eigenvalues and their corresponding eigenvectors can fail to have the same regularity as $C$. To ensure a sufficient regularity, we therefore assume in the following Lemma that non-zero eigenvalues are distinct. The general case will be treated later, thanks to a change of variable argument, see Section \ref{Sec: change state space} below.} 

\begin{lemma}\label{lemmadistincteigenmatrix}
	Assume that $C \in \mathcal{C}^{1,1}_{loc}(\mathbb{R}^d,\mathbb{S}^d)$.
	Let $x \in \mathcal{D}$ be such that the spectral decomposition of $C(x)$ is given by
	\begin{equation}\label{eq: sectral decompo}
	C(x)=Q(x)\diag{\lambda_1(x),\ldots ,\lambda_r(x),0,\ldots ,0}Q(x)^\top
	\end{equation}
	with $\lambda_1(x) >\lambda_2(x) >\cdots >\lambda_r(x)>0$ and $Q(x)Q(x)^\top=I_d$, $r\le d$.
	
	Then there {exist} an open (bounded) neighborhood $N(x)$ of $x$ and two measurable $\M^{d}$-valued functions on $\bbR^d$, $y \mapsto Q(y):=[q_1(y)\cdots q_d(y)]$ and $y \mapsto \Lambda(y):=\diag{\lambda_1(y),\ldots ,\lambda_d(y)}$ such that
	\begin{enumerate}[{\rm (i)}]
		\item
		$C(y)=Q(y)\Lambda(y)Q(y)^\top$ and $Q(y)Q(y)^\top=I_d$, for all $y \in \bbR^d$,
		\item
		 $\lambda_1(y) >\lambda_2(y) >...>\lambda_r(y)>\max\{\lambda_{i}(x), r+1\le i\le d\}\vee 0$, for all $y \in N(x)$,
		\item
		$\bar{\sigma}: y \mapsto \bar{Q}(y)   \bar{\Lambda}(y)^{\frac12}$ is $C^{1,1}(N(x),\mathbb{M}^d)$, in which 
		  $\bar{Q}:=[q_1\cdots q_r\; 0\cdots 0]$ and  $\bar{\Lambda}=$ ${\rm diag}[\lambda_1,...,$ $\lambda_r,0,...,0]$.
		
	\end{enumerate}
	
	Moreover, we have: 
	\begin{equation}\label{eqsigmabarc}
	\langle u, \sum_{j=1}^{d} D  \bar{\sigma}^j(x) \bar{\sigma}^j(x) \rangle = \langle u, \sum_{j=1}^{d} D  C^j(x) (CC^+)^j(x)  \rangle, \quad \mbox{for all } u \in \Ker(C(x)).
	\end{equation}
\end{lemma}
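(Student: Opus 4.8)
The plan is to produce, on a small neighborhood $N(x)$ of $x$, genuinely $\Cc^{1,1}$ fields of eigenvalues and eigenvectors for the $r$ simple positive eigenvalues $\lambda_1(x)>\cdots>\lambda_r(x)>0$, while selecting the remaining (possibly degenerate, near-zero) eigendata only measurably; the point is that $\bar\sigma$ depends on the bad eigendata only through the zero block and is therefore built entirely out of the good, regular part. Granting such fields, properties (i) and (ii) are immediate from the spectral decomposition \eqref{eq: sectral decompo} and from continuity of eigenvalues: shrinking $N(x)$ keeps $\lambda_r(y)$ above the cluster of small eigenvalues and above $0$, so the first $r$ eigenvalues stay distinct and separated from the rest. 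Property (iii) then holds because each nonzero column $\bar\sigma^j=\lambda_j^{1/2}q_j$ ($j\le r$) is a product of $\Cc^{1,1}$ maps, $\lambda_j^{1/2}$ being $\Cc^{1,1}$ since $\lambda_j$ stays strictly positive.

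For the construction I would use the Riesz spectral projection. Since each $\lambda_j(x)$ ($j\le r$) is a simple eigenvalue isolated from the rest of the spectrum, pick disjoint small circles $\Gamma_1,\dots,\Gamma_r$ enclosing $\lambda_1(x),\dots,\lambda_r(x)$ and no other eigenvalue, and set $P_j(y)=\frac{1}{2\pi i}\oint_{\Gamma_j}(zI_d-C(y))^{-1}\diff z$. By continuity this separation persists on a small $N(x)$, so $P_j(y)$ remains the rank-one orthogonal projection onto the $\lambda_j(y)$-eigenline there; since matrix inversion is smooth and $C\in\Cc^{1,1}_{loc}$, the resolvent and hence $P_j$ are $\Cc^{1,1}$ on $N(x)$. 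Then $\lambda_j=\Tr(C P_j)$ is $\Cc^{1,1}$, and $q_j:=P_j q_j(x)/\|P_j q_j(x)\|$ is a $\Cc^{1,1}$ unit $\lambda_j$-eigenvector, the denominator being nonzero near $x$ because $P_j(x)q_j(x)=q_j(x)$. Completing $q_1,\dots,q_r$ to an orthonormal basis measurably on $N(x)$, and using any global measurable spectral decomposition of $C$ off $N(x)$, yields the required measurable $Q,\Lambda$ on $\bbR^d$ satisfying (i)--(iii).

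For the final identity, first note $\bar\sigma\bar\sigma^\top=\bar Q\bar\Lambda\bar Q^\top=\sum_{j=1}^r\lambda_j q_jq_j^\top=C$ on $N(x)$, so $\Ker C(x)=\Ker\bar\sigma(x)^\top$ and any $u\in\Ker C(x)$ satisfies $u^\top\bar\sigma(x)=0$, $u^\top C(x)=0$, and $u^\top q_j(x)=0$ for $j\le r$. On the left-hand side, $\bar\sigma^j=\lambda_j^{1/2}q_j$ for $j\le r$ and $\bar\sigma^j(x)=0$ for $j>r$, so only the first $r$ terms survive; differentiating and using $u^\top q_j(x)=0$ to kill the $D\lambda_j^{1/2}$ contribution gives $u^\top D\bar\sigma^j(x)\bar\sigma^j(x)=\lambda_j(x)\,u^\top D_{q_j}q_j(x)$, where $D_{q_j}$ denotes the directional derivative in the direction $q_j(x)$. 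On the right-hand side, arguing exactly as in Remark \ref{rem : projection} (see also \eqref{eq: trace prod vec}) reduces $\langle u,\sum_{j}DC^j(x)(CC^+)^j(x)\rangle$ to $\sum_{j=1}^r u^\top D_{q_j}(Cq_j)(x)=\sum_{j=1}^r u^\top (D_{q_j}C(x))q_j(x)$. To match the two sides I would differentiate the eigenrelation $C(y)q_j(y)=\lambda_j(y)q_j(y)$ at $y=x$ in the direction $q_j(x)$ and apply $u^\top$: the terms $u^\top C(x)D_{q_j}q_j(x)$ and $(D_{q_j}\lambda_j(x))\,u^\top q_j(x)$ both vanish, leaving $u^\top (D_{q_j}C(x))q_j(x)=\lambda_j(x)\,u^\top D_{q_j}q_j(x)$. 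Summing over $j=1,\dots,r$ identifies the two expressions.

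The main obstacle is the regularity of the eigendata around the degenerate zero eigenvalue: one cannot diagonalize $C$ smoothly near $x$ because the zero eigenvalue has multiplicity $d-r$ and may split non-smoothly. The resolution is precisely that only the simple positive eigenvalues, which are isolated from the degenerate cluster (a separation encoded in property (ii)), need to be tracked smoothly via their Riesz projections, while the degenerate block is relegated to a measurable choice that is annihilated inside $\bar\sigma$. Making the separation and continuity argument precise, and verifying the $\Cc^{1,1}$ (rather than merely $\Cc^{1}$) bookkeeping of the resolvent and of $\lambda_j^{1/2}$, is the technical heart of the argument; by contrast the trace identity is a short computation once the smooth eigenframe is in hand.
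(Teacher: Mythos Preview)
Your argument is correct, with one harmless slip: the claim $\bar\sigma\bar\sigma^\top=C$ ``on $N(x)$'' is false in general, since the eigenvalues $\lambda_{r+1}(y),\dots,\lambda_d(y)$ need not vanish for $y\ne x$ (and $C$ is only $\mathbb{S}^d$-valued, not $\mathbb{S}^d_+$-valued, off $\calD$). But you only use the equality at $x$ to conclude $\Ker C(x)=\Ker\bar\sigma(x)^\top$ and $u^\top q_j(x)=0$, which is fine.

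Your route differs from the paper's in both halves. For the regularity of $\bar\sigma$, the paper simply invokes Weyl's perturbation theorem for (ii) and \cite{mag85} for the $\Cc^{1,1}$ dependence of simple eigenpairs; your Riesz projection construction is a self-contained alternative that achieves the same $\Cc^{1,1}$ conclusion and makes transparent why only the simple eigenvalues are needed. For the identity \eqref{eqsigmabarc}, the paper works with the Kronecker/trace machinery from the Appendix: it differentiates the matrix identities $\bar C=\bar\sigma\bar\sigma^\top$ and $\bar C=C\,\bar Q\bar Q^\top$ (the latter holding on all of $N(x)$, unlike $\bar C=C$), contracts against $(I_d\otimes u^\top)$, and uses $\bar Q(x)\bar Q(x)^\top=C(x)C(x)^+$ to land on the right-hand side. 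Your approach, differentiating the eigenrelation $Cq_j=\lambda_j q_j$ in the direction $q_j(x)$ and contracting with $u$, is more elementary and avoids the Kronecker formalism entirely; the price is that it is tied to the eigenbasis, whereas the paper's computation is essentially the same trace manipulation as in Proposition~\ref{propequivalencedaprato} and Remark~\ref{rem : projection}, which keeps the arguments uniform across the paper.
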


\begin{proof} Note that the fact that $(q_{i})_{i\le d}$ can be chosen measurable is guaranteed when $(C,\Lambda)$ is measurable by the fact that each eigenvector solves a quadratic minimization problem, see e.g.~\cite[Proposition 7.33(p.153)]{ber78}. Moreover,  the continuity of the eigenvalues follows from {Weyl's perturbation theorem}, \cite[Corollary III.2.6]{bha}, and   the  smoothness of $(\bar \Lambda,\bar Q)$ is a consequence of \cite[Theorem 1]{mag85} since all the positive eigenvalues are simple and $C$ is $\mathcal{C}^{1,1}_{loc}(\mathbb{R}^d,\mathbb{S}^d)$. 
	
Let us now observe that any $u \in \Ker(C(x))$ satisfies	
	\begin{equation*}
	u^\top \bar{Q}(x)=u^\top \bar{\sigma}(x)=0.
	\end{equation*}
	Since  $\bar{C}:=\bar{\sigma}\bar{\sigma}^\top$  is  differentiable at $x$, the product rule of Proposition \ref{propdifferentiationrules} combined with Proposition \ref{propappendixformulas} yields
	\begin{align*}
	(I_d \otimes u^\top) D \bar{C}(x)&=(I_d \otimes u^\top)\left[ (\bar{\sigma}(x) \otimes I_d)D \bar{\sigma}(x) + (I_d \otimes \bar{\sigma}(x))D \bar{\sigma}(x)^\top\right]\nonumber
	\\
	&=(\bar{\sigma}(x) \otimes u^{\top})D \bar{\sigma}(x) 
	\\
	&=\bar{\sigma}(x)(I_{d} \otimes u^{\top})D \bar{\sigma}(x).
	\end{align*}

	Observing that $\bar{C}=\bar{\sigma}\bar{\sigma}^\top=C \bar{Q}\bar{Q}^\top$ and that  $\bar{Q}(x)\bar{Q}(x)^\top = C(x)C(x)^+$, we   get by similar computations:
	\begin{align*}
	(I_d \otimes u^\top) D\bar{C}(x)&= (I_d \otimes u^\top) \left[(C(x)C(x)^+ \otimes I_d)D C(x) + (I_d \otimes C(x))D \left(\bar{Q}\bar{Q}^\top \right)(x) \right]
		\\
			&=C(x)C(x)^+( I_d\otimes u^{\top})D C(x). 
	\end{align*} 

	Combining the above leads to
	\begin{equation*}
	\Tr\left[ (I_d \otimes u^\top) D \bar{\sigma}(x)\bar{\sigma}(x)\right] =\Tr\left[ (I_d \otimes u^\top) D C(x)C(x)C(x)^+\right],
	\end{equation*} 
	 which proves \eqref{eqsigmabarc} by similar computations as in the proof of \eqref{eq: trace prod vec}. 
\end{proof}

We can now adapt the arguments of \cite{buc}. {In the following we use the notion of  proximal normals. A vector $u \in \mathbb{R}^d$ is said to be a proximal normal to $\Dc$ at a point $x$ if  $ \|u\|=d_{\calD}(x+u)$, where  $d_\calD$ is the distance function to $\mathcal{D}$. We denote by 
	$\mathcal{N}^{1,prox}_{\mathcal{D}}(x)$ the cone spanned by all proximal normals.}
Note however that 
 \eqref{ourcondcov}-\eqref{ourconddrift}  holds at $x$ for all    proximal normals $u \in \mathcal{N}^{1,prox}_{\mathcal{D}}(x)$ if and only if it  holds for all     $u \in \mathcal{N}^{1}_{\mathcal{D}}(x)$.
 {Indeed,     
 	\begin{align}\label{eq:normallimsup}
 	 \mathcal{N}^{1,prox}_{\mathcal{D}}(x)\subset \mathcal{N}^1_{\mathcal{D}}(x)\subset \bar{\mbox{{co}}}\left( \displaystyle \limsup_{ \Dc \ni y \to x}  \mathcal{N}^{1,prox}_{\mathcal{D}}(y) \right),
 	 \end{align}
 	 where  $\limsup$ stands for the Painlevé-Kuratowski upper limit (see e.g.~\cite{aubf,da}) and $\bar{\mbox{co}}$ is the closed convex hull (see also \cite[Remark 4.2 (a)]{da}).}

\begin{lemma}\label{lemmadistincteigen}
	Assume that $\mathcal{D}$ is \textit{stochastically invariant} with respect to the diffusion \eqref{diffusionsdeinvariance}. Let $x\in \Dc$ and $C$ be as in Lemma \ref{lemmadistincteigenmatrix}. 
	Then, \eqref{ourcondcov} and \eqref{ourconddrift} hold at $x$ for   all $u \in \mathcal{N}^{1}_{\mathcal{D}}(x)$. 
\end{lemma}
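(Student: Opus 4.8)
The plan is to follow the stochastic Taylor expansion strategy of \cite{buc}, adapting it to the non-differentiable columns of $\sigma$ by exploiting the smooth spectral data produced by Lemma \ref{lemmadistincteigenmatrix}. By \eqref{eq:normallimsup} it suffices to verify \eqref{ourcondcov}--\eqref{ourconddrift} for a proximal normal $u \in \mathcal{N}^{1,prox}_{\mathcal{D}}(x)$, the case of a general $u \in \mathcal{N}^1_{\mathcal{D}}(x)$ then following by passing to closed convex hulls of upper limits. For a proximal $u$ there are a constant $C_0>0$ and a ball $N(x)$ on which $\langle u, y-x\rangle \le C_0\|y-x\|^2$ for every $y \in \mathcal{D}$, so I would introduce the test function $\phi(y):=\langle u, y-x\rangle - C_0\|y-x\|^2$, which satisfies $\phi(x)=0$, $D\phi(x)=u^\top$, $D^2\phi(x)=-2C_0 I_d$ and $\phi\le 0$ on $\mathcal{D}\cap N(x)$. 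Letting $X$ be a $\mathcal{D}$-valued solution started at $x$ and $\tau$ its first exit time from $N(x)$, we then have $\phi(X_{t\wedge\tau})\le 0$ for all $t\ge 0$.

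The first condition \eqref{ourcondcov} is the easy one and requires only continuity of $\sigma$. Applying It\^o once gives $\phi(X_{t\wedge\tau})=\int_0^{t\wedge\tau}\mathcal{L}\phi(X_s)ds+\int_0^{t\wedge\tau}D\phi(X_s)\sigma(X_s)dW_s$, whose martingale part has leading term $u^\top\sigma(x)W_t$ of order $\sqrt t$ while the drift is of order $t$. Since $\phi(X_{t\wedge\tau})\le 0$ and, by the law of the iterated logarithm, $u^\top\sigma(x)W_t$ changes sign in every neighbourhood of $0$ unless $u^\top\sigma(x)=0$, I would conclude $\sigma(x)^\top u=0$, which is equivalent to $C(x)u=0$. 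In particular $u\in\Ker C(x)$, so that $u^\top\bar\sigma(x)=0$ as established in the proof of Lemma \ref{lemmadistincteigenmatrix}.

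For the drift condition \eqref{ourconddrift} I would pass to the spectral frame of Lemma \ref{lemmadistincteigenmatrix}. Writing $\sigma=Q\Lambda^{1/2}$ and $B:=\int_0^\cdot Q(X_s)^\top dW_s$, which is a standard $d$-dimensional Brownian motion since $Q$ is orthogonal, the dynamics become $dX_t=b(X_t)dt+Q(X_t)\Lambda(X_t)^{1/2}dB_t$, and I split the driving matrix into the smooth part $\bar\sigma=\bar Q\bar\Lambda^{1/2}$ carrying the $r$ distinct positive eigenvalues and the remainder $\tilde\sigma=\sigma-\bar\sigma$ carrying the directions $j>r$ whose eigenvalues vanish at $x$. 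The columns $\tilde\sigma^j$ are exactly the ones along which $\sigma$ may fail to be differentiable; since $\tilde\sigma^j(x)=0$ and $\tilde\sigma$ is driven by $B^{r+1},\dots,B^d$, which are independent of $B^1,\dots,B^r$, I would eliminate them by conditioning on $\mathcal{F}^{B'}$, the $\sigma$-algebra generated by $B^1,\dots,B^r$: the contribution $\int_0^{t\wedge\tau}D\phi(X_s)\tilde\sigma(X_s)dB_s$ is, for a.e.\ frozen path of $B'$, a martingale in the remaining Brownian motion and hence has vanishing $\mathcal{F}^{B'}$-conditional expectation. This conditioning is precisely what produces the projection factor $CC^+$ in \eqref{ourconddrift}. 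On the smooth part I would then run the second-order expansion of \cite{buc}: a second application of It\^o to $\psi:=D\phi\,\bar\sigma$, using $\psi(x)=u^\top\bar\sigma(x)=0$ from the previous step, reduces $\int D\phi\,\bar\sigma\,dB$ to a double stochastic integral, whose small-time behaviour (\cite{buc,cst05a}), combined with the constraint $\phi(X_{t\wedge\tau})\le 0$ evaluated along a sequence $t_n\downarrow 0$ for which $(B^j_{t_n})^2/t_n\to 0$, yields, after the decisive cancellation of the Hessian terms (so that the proximal constant $C_0$ drops out), the Da Prato--Frankowska inequality $\langle u, b(x)-\tfrac12\sum_j D\bar\sigma^j(x)\bar\sigma^j(x)\rangle\le 0$. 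The identity \eqref{eqsigmabarc} then rewrites this exactly as \eqref{ourconddrift}.

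The hard part will be to make the two mechanisms cooperate: the conditioning on $\mathcal{F}^{B'}$ removes the non-differentiable columns $\tilde\sigma$, but the extraction of the drift from the smooth part $\bar\sigma$ rests on a pathwise small-time asymptotic for the double integral together with the almost-sure constraint $\phi(X_{t\wedge\tau})\le 0$. I would have to check that the conditioning does not destroy these pathwise estimates, controlling for instance $\int_0^{t}\|D\phi(X_s)\tilde\sigma(X_s)\|^2 ds=O(t^{3/2})$ conditionally (using $\|\tilde\sigma^j(X_s)\|^2\le\lambda_j(X_s)=O(\|X_s-x\|)$, since the vanishing eigenvalues are locally Lipschitz), and verifying that the off-diagonal double integrals average out while the diagonal ones deliver exactly $-\tfrac12\sum_{j\le r}D\bar\sigma^j(x)\bar\sigma^j(x)$. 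Carrying the stopping time $\tau$ and the $o(t)$ error terms through the conditioning is the main bookkeeping obstacle.
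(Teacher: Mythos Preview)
Your strategy coincides with the paper's: reduce to proximal normals, build a test function $\phi$ with $D\phi(x)=u^\top$ and $\phi\le 0$ on $\calD$, pass to the spectral Brownian motion $B=\int Q(X)^\top dW$, kill the non-smooth directions $j>r$ by conditioning on the filtration of $(B^1,\dots,B^r)$, and extract the drift inequality from a second-order expansion of the smooth part $\bar\sigma$ via small-time estimates for double stochastic integrals. The identity \eqref{eqsigmabarc} then converts the $\bar\sigma$ inequality into \eqref{ourconddrift}.

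There are three technical points where your sketch diverges from the paper and where your ``main bookkeeping obstacle'' is actually resolved. First, the paper does \emph{not} use a stopping time: it multiplies $\psi(y)=\langle u,y-x\rangle-\tfrac{\kappa}{2}\|y-x\|^2$ by a smooth cutoff $\rho$ supported in $N(x)$, so that $\phi(X_t)\le 0$ holds for \emph{all} $t$. This matters because the conditioning step relies on results of Liptser--Shiryaev and Kurtz identifying $\E[\int_0^t H_s\,dW_s\mid \Fc^{\bar B}_t]$ with $\int_0^t \E[H_s\mid \Fc^{\bar B}_s]\,d\bar B_s$; carrying a stopping time $\tau$ (which depends on the full path of $X$, hence on $\bar B^\perp$) through such an identity is exactly the difficulty you anticipate, and the compactly supported test function sidesteps it entirely. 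Second, $\bar\sigma$ is only $\Cc^{1,1}$ on $N(x)$, so your ``second application of It\^o to $\psi=D\phi\,\bar\sigma$'' is not directly available; the paper proves a Koml\'os-based substitute (Lemma \ref{lemmakomlos}) giving $d\psi(X_t)=(D\psi\,b+\eta_t)\,dt+D\psi\,\sigma\,dW_t$ with a merely bounded adapted $\eta$. Third, after conditioning, the coefficients $\theta,\beta,\gamma$ in the small-time lemma are conditional expectations $\E_{\Fc^{\bar B}_\cdot}[\,\cdot\,]$, not the raw processes; the paper checks their continuity/H\"older regularity at $0$ (using independence of increments of $\bar B$ and Kolmogorov's criterion) before invoking Lemma \ref{lemmadoubleintsto}, which simultaneously delivers $u^\top\sigma(x)=0$ and the drift inequality. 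Your separate LIL argument for \eqref{ourcondcov} is fine but unnecessary once this machinery is in place.
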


\begin{proof} It follows from the discussion before our lemma that it suffices to prove our claim for $u \in \mathcal{N}^{1,prox}_{\mathcal{D}}(x)$.    
	Let $(X,W)$ denote a  {weak solution} starting at $X_0=x$ such that  $X_t \in \mathcal{D}$ for all $t \geq 0 $. If $x\notin \partial \Dc$, then  $\mathcal{N}^{1,prox}_{\mathcal{D}}(x)=\{0\}$ and there is nothing to prove. We therefore assume from now on that $x\in {\partial\Dc}$. We fix $u \in \mathcal{N}^{1,prox}_{\mathcal{D}}(x)$.
	
{Step 1.} We first claim that    there exists a function $\phi \in \mathcal{C}^{\infty}_b(\mathbb{R}^{d} ,\mathbb{R})$  with compact support in $N(x)$ such that $\displaystyle\max_{\mathcal{D}} \phi = \phi(x) = 0$ and $D \phi(x)= u{^\top}$. Indeed, it follows from \cite[Chapter 6.E]{roc} that one can find $\kappa >0$ such that
	$
	\langle u, y-x \rangle \leq \frac{\kappa}{2} \|y-x\|^2 $ for all $ y \in \mathcal{D}$.
	Then, one can set $
	\psi:=\langle u, \cdot-x \rangle - \frac{\kappa}{2} \|\cdot-x\|^{2}
	$
	and define $\phi:=\psi \rho$ in which $\rho$ is a $\mathcal{C}^{\infty}_b$ function with   values in $[0,1]$, compact support  included in $ N(x)$, and  satisfying $\rho=1$ in  a neighborhood of $x$.

{Step 2.}	 Since $\mathcal{D}$ is invariant under the diffusion $X$, $\phi(X_t) \leq \phi(x)$, for all $t\geq 0$. {From now on, we use the notations of Lemma \ref{lemmadistincteigenmatrix}. By the above and It\^{o}'s lemma:} 
	\begin{align*}
	0\ge & \int_{0}^{t } \mathcal{L}\phi(X_{s})ds + \int_{0}^{t} D\phi(X_{s}) \sigma(X_{s})dW_s   
	=\int_{0}^{t } \mathcal{L}\phi(X_{s})ds + \int_{0}^{t} (D\phi Q \Lambda^{\frac12}Q^\top)(X_{s})dW_s 
	\end{align*}
	in which    $\Lc$ is the infinitesimal generator of $X$. Let us define the Brownian motion $B=\int_{0}^{\cdot} Q(X_{s})^\top dW_s$, recall that $Q$ is orthogonal, together with $\bar{B}=\Lambda(x)\Lambda(x)^+ B =(B^1,..,B^r,0,...,0)^\top $ and $\bar{B}^{\perp}=(I_d-\Lambda(x)\Lambda(x)^+ )B=(0,...,0,B^{r+1},...,B^d)$, recall Proposition \ref{proppseudoinversespectral}. Since $Q {\bar{\Lambda}}^{\frac12}=\bar{Q} {\bar{\Lambda}}^{\frac12}$, the above inequality can be written in the form 
	\begin{align*}
	0\ge & \int_{0}^{t } \mathcal{L}\phi(X_{s})ds + \int_{0}^{t} D\phi(X_{s}) \bar \sigma(X_{s})d\bar B_s + \int_{0}^{t} (D\phi Q \Lambda^{\frac12})(X_{s})d\bar B^{\perp}_s.
	\end{align*}
	Let $(\mathcal{F}_s^{\bar{B}})_{s\ge 0}$ be the completed filtration   generated by $\bar B$. By  \cite[Corollaries 2 and 3 of Theorem 5.13]{lip01}, \cite[Lemma 14.2]{kur},  and the fact that the martingale $\bar B^{\perp}$ is independent of $\bar B$, we obtain 
	\begin{align*}
	0&\ge  \int_{0}^{t } \mathbb{E}_{\mathcal{F}_s^{\bar{B}}} [\mathcal{L}\phi(X_{s})]ds + \int_{0}^{t} \mathbb{E}_{\mathcal{F}_s^{\bar{B}}} [D \phi(X_{s}) \bar \sigma(X_{s})]d\bar B_s\\
		&= \int_{0}^{t } \mathbb{E}_{\mathcal{F}_s^{\bar{B}}} [\mathcal{L}\phi(X_{s})]ds + \int_{0}^{t} \mathbb{E}_{\mathcal{F}_s^{\bar{B}}} [D \phi(X_{s}) \bar \sigma(X_{s})]d B_s,
	\end{align*}	
	{where the last equality holds because  the $(d-r)$ columns of $\bar \sigma$ are 0.} We now apply Lemma \ref{lemmakomlos} below to $(D  \phi \bar{\sigma})(X)$ and use  \cite[Corollaries 2 and 3 of Theorem 5.13]{lip01} and \cite[Lemma 14.2]{kur} again to find a bounded adapted process $\eta$ such that 	
	\begin{equation}\label{eq: for applying small time behavior}
	0\ge \int_{0}^{t} \theta_sds + \int_{0}^{t} \left( \alpha + \int_{0}^{s} \beta_rdr +  \int_{0}^{s} \gamma_r dB_r \right)^\top  dB_s  
	\end{equation}
	where 
	\begin{eqnarray*}
		\theta:= \mathbb{E}_{\mathcal{F}_{\cdot}^{\bar{B}}}\left[\mathcal{L}\phi(X_{\cdot})\right]&,&
	\alpha^{\top}:=  ( D\phi \bar{\sigma})(x) = u^\top  Q(x){\Lambda(x)}^{\frac12} \\
	\beta := \mathbb{E}_{\mathcal{F}_\cdot^{\bar{B}}}\left[D (D \phi \bar{\sigma} )(X_{\cdot})b(X_{\cdot}) + \eta_{\cdot} \right]&,&
	\gamma :=  \mathbb{E}_{\mathcal{F}_\cdot^{\bar{B}}}\left[D (D \phi  \bar{\sigma})\bar{\sigma}(X_{\cdot})\right],  
	\end{eqnarray*} 
	recall from Step 1 that $D\phi(x)=u^{\top}$.

{Step 3.} We now check that we can apply   Lemma \ref{lemmadoubleintsto} below.  First note that all the above processes are bounded. This follows from Lemma \ref{lemmadistincteigenmatrix}, \eqref{growthconditions} and the fact that $\phi$ has compact support. 
In addition, given $T >0$, the independence of the increments of  $\bar{B}$ implies that $\theta_s= \bbE_{\calF^{\bar{B}}_T}\left[ \calL \phi (X_s) \right]$ for all $s \leq T$. It follows that $\theta$ is a.s.~continuous at $0$. 

Similarly, $\gamma =  \mathbb{E}_{\mathcal{F}_T^{\bar{B}}}\left[D (D \phi  \bar{\sigma})\bar{\sigma}(X_{\cdot})\right]$ on $[0,T]$.
Moreover, since $ D\phi\bar{\sigma}$ is $\mathcal{C}^{1,1}$, $F:=D ( D\phi  \bar{\sigma})\bar{\sigma}$ is Lipschitz and  Jensen's inequality combined with \eqref{holdertrajectories} implies that we can find $L'>0$ such that  
	\begin{equation*}
	\bbE\left[ \|\gamma_s - \gamma_r \|^{4} \right] \leq \bbE\left[ \|F(X_s) - F(X_r) \|^{4} \right]  \leq L' |s-r|^{2}, \quad \mbox{for all } 0 \leq s,r \leq 1.
	\end{equation*}
	
 By  Kolmogorov's continuity criterion, up to considering a suitable modification,   $\gamma$  has $\epsilon$-Hölder sample paths for all $0<\epsilon<\frac14$, in particular $\int_{0}^{t} \|\gamma_s - \gamma_0\|^2ds = O(t^{1+\epsilon})$ for  $0<\epsilon<\frac12$.

{Step 4.} In view of Step 3, we can apply  Lemma \ref{lemmadoubleintsto} to \eqref{eq: for applying small time behavior} to deduce  that $\alpha=0$ and 
		$\theta_0-\frac{1}{2}\Tr(\gamma_0) \leq 0$. {Multiplying the first equation  by $ {\Lambda(x)}^{\frac12} Q^{\top}(x)$ implies that $0=\alpha^{\top} {\Lambda(x)}^{\frac12} Q^{\top}(x)= u^\top Q(x) {\Lambda(x)}^{\frac12}  {\Lambda(x)}^{\frac12} Q^{\top}(x)   = u^\top  C(x)$,} or equivalently $C(x)u=0$ since $C(x)$ is symmetric. The second identity combined with $D\phi(x)=u^{\top}$ and Proposition \ref{propdifferentiationrules} shows that  
	\begin{align*}
	0\ge&  \Lc\phi(x)-\frac12 \Tr\left[ {  \bar \sigma^{\top} D^{2}  \phi \bar \sigma} +( {I_d\otimes u^\top})D \bar{\sigma}\bar{\sigma}\right](x) 
	 = u^{\top}b(x)-\frac12 \Tr\left[( {I_d\otimes u^\top}) D \bar{\sigma}\bar{\sigma}\right](x),
	\end{align*}
which is equivalent to \eqref{ourconddrift} by  \eqref{eqsigmabarc}  and  similar computations as in the proof of \eqref{eq: trace prod vec}. 		 
\end{proof}

The rest of this section is dedicated to the proof of the two technical lemmas that were used above. Our first result is a slight extension of It\^{o}'s lemma to only $\mathcal{C}^{1,1}$ function. It is based on a simple application of   Koml\'os lemma (note that the assumption that $f$ has a compact support in the following is just for convenience, it can obviously be removed by a localization argument, {in which case the process $\eta$ is only locally bounded}). 

\begin{lemma}\label{lemmakomlos}
	Assume that $b$ and $\sigma$ are continuous and that there exists a solution $(X,W)$  to \eqref{diffusionsdeinvariance}. Let $f \in \mathcal{C}^{1,1} (\mathbb{R}^d,\mathbb{R})$ have compact support. Then, there exists an adapted bounded process $\eta$ such that 	
	\begin{equation*}\label{eqkomlos}
	f(X_t)= f(x) + \int_{0}^{t} \left( D  f(X_s)  b(X_s) + \eta_s \right) ds + \int_{0}^{t} D  f(X_s) \sigma(X_s)dW_s
	\end{equation*}
	for all $t\ge 0$.
\end{lemma}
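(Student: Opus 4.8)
The plan is to establish this extension of It\^{o}'s formula to $\mathcal{C}^{1,1}$ functions by regularizing $f$ through mollification and passing to the limit, with Koml\'os' lemma providing the compactness needed to control the second-order terms that only converge weakly. First I would fix a standard mollifier sequence $(\zeta_n)_n$ on $\mathbb{R}^d$ and set $f_n:=f*\zeta_n \in \mathcal{C}^\infty(\mathbb{R}^d,\mathbb{R})$. Since $f$ has compact support, the $f_n$ are smooth with uniformly compact support, $f_n \to f$ and $D f_n \to D f$ uniformly, and because $D f$ is globally Lipschitz (as $f\in\mathcal{C}^{1,1}$ with compact support), the second derivatives $D^2 f_n$ are uniformly bounded, say by a constant $M$, and converge to the (a.e.-defined, bounded) second derivative of $f$ at every point of differentiability.

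Next I would write the ordinary It\^{o} formula for each smooth $f_n$:
\begin{equation*}
f_n(X_t)=f_n(x)+\int_0^t\Big(D f_n(X_s) b(X_s)+\tfrac12\Tr[\sigma\sigma^\top(X_s)D^2 f_n(X_s)]\Big)ds+\int_0^t D f_n(X_s)\sigma(X_s)dW_s.
\end{equation*}
The left-hand side converges to $f(X_t)$ pointwise. For the drift and the stochastic integral, the uniform convergence of $D f_n$ together with the continuity of $b,\sigma$ and the bound \eqref{holdertrajectories} lets me pass to the limit directly: $\int_0^t D f_n(X_s)b(X_s)ds\to\int_0^t D f(X_s)b(X_s)ds$ and, by It\^{o}'s isometry and dominated convergence, $\int_0^t D f_n(X_s)\sigma(X_s)dW_s\to\int_0^t D f(X_s)\sigma(X_s)dW_s$ in $L^2$. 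The genuine obstacle is the second-order term $\eta^n_s:=\tfrac12\Tr[\sigma\sigma^\top(X_s)D^2 f_n(X_s)]$: the sequence $D^2 f_n(X_s)$ does not converge pointwise wherever $f$ fails to be twice differentiable, so this term has no classical limit.

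This is exactly where Koml\'os' lemma enters. The processes $(\eta^n)_n$ are uniformly bounded (by $\tfrac12\|\sigma\sigma^\top(X_\cdot)\|M$, which is bounded on the compact support of $f$), hence bounded in $L^2(\Omega\times[0,T])$ for each $T$; Koml\'os' lemma therefore yields a subsequence whose Ces\`aro means $\tilde\eta^n:=\frac1n\sum_{k=1}^n\eta^{m_k}$ converge $d\mathbb{P}\otimes dt$-almost everywhere to a bounded limit process $\eta$. Applying the same Ces\`aro averaging to the already-convergent terms of the It\^{o} expansion for the $f_{m_k}$ (averaging does not disturb convergent sequences) and letting $n\to\infty$, all other terms retain their limits while $\int_0^t\eta^{m_k}_sds$ is replaced by $\int_0^t\tilde\eta^n_sds\to\int_0^t\eta_sds$ by bounded convergence. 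The identity in the statement follows, with $\eta$ adapted (as an a.e.-limit of adapted processes) and bounded. I expect the main subtlety to be the bookkeeping that guarantees the limit object $\eta$ is genuinely adapted and that the two sides agree for all $t\ge 0$ simultaneously rather than for a fixed $t$; the former is handled by taking an adapted modification, and the latter by the almost-sure continuity of all other terms in $t$ together with a standard argument along rationals.
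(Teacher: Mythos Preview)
Your proposal is correct and follows essentially the same strategy as the paper: mollify $f$, apply It\^{o}'s formula to the smooth approximants, pass to the limit in the first-order terms directly, and handle the second-order term $\eta^n=\tfrac12\Tr[\sigma\sigma^\top(X)D^2 f_n(X)]$ via a Koml\'os-type compactness argument. The only cosmetic difference is that the paper invokes the Delbaen--Schachermayer version (forward convex combinations $\tilde\eta^n\in\Conv(\eta^k,\,k\ge n)$) rather than the classical Ces\`aro-mean formulation you use; both are equivalent for the purpose at hand and the remaining details are identical.
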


\begin{proof} Since $f\in \calC^{1,1}$ has a compact support, 	we can find a sequence $(f_n)_{n}$ in $\mathcal{C}^\infty$ with  compact support (uniformly) and a constant $K>0$ such that 
	\begin{enumerate}[(i)]
		\item 
		$\|D^2 f_n\| \leq K$, 
		\item
		$\|f_n-f\|+\|D  f_n-D  f\| \leq \frac{K}{n}$,	
		\end{enumerate}
	for all $n\ge 1$. This is obtained by considering a simple mollification of $f$. By applying Itô's Lemma to $f_n(X)$, we get 
	\begin{equation*}
	f_n(X_t)= f_n(x) + \int_{0}^{t}   D  f_n(X_s)  b(X_s) ds +  \int_{0}^{t} \eta^{n}_{s}ds + \int_{0}^{t} D  f_n(X_s)  \sigma(X_s)dW_s
	\end{equation*}
	in which $\eta^{n}:=\frac{1}{2}{\Tr[D ^2 f_n\sigma\sigma^{\top}](X)}$.  Since {$\sigma\sigma^{\top}$} is continuous, (i) above implies that $(\eta^{n})_{n}$ is uniformly bounded in $L^{\infty}(dt \times d\mathbb{P})$.  By \cite[Theorem 1.3]{del99}, there exists $(\widetilde{\eta}^n) \in \Conv( \eta^k, k\geq n )$  such that $\widetilde{\eta}^n \rightarrow \eta$ $dt\otimes d\mathbb{P}$ almost surely. 
	Let $N_n \geq 0$ and $(\lambda^n_k)_{n \leq k \leq N_n}\subset [0,1]$ be such that 
	$\widetilde{\eta}^n=\sum_{k=n}^{N_n} \lambda^n_k \eta^k$ and $\sum_{k=n}^{N_n} \lambda^n_k =1$. Set  $\widetilde{f}_n:=\sum_{k=n}^{N_n} \lambda^n_k f_k$. Then, 	
	\begin{equation}\label{eqitofntilde}
	\widetilde{f}_n(X_t)= \widetilde{f}_n(x) + \int_{0}^{t} D  \widetilde{f}_n(X_s)  b(X_s) ds +  \int_{0}^{t} \widetilde{\eta}^n_s  ds + \int_{0}^{t} D  \widetilde{f}_n(X_s) \sigma(X_s)dW_s.
	\end{equation}
	By   dominated convergence, 
	$
	\int_{0}^{t} \widetilde{\eta}^n_s  ds$ converges a.s.~to $\int_{0}^{t} \eta_s  ds
	$.  
	Moreover,  (ii) implies that  
	\begin{equation*}
	\|\widetilde{f}_n(X_t)-f(X_t)\| \leq  \sum_{k=n}^{N_n} \lambda^n_k \|\widetilde{f}_k(X_t)-f(X_t)\| \leq \sum_{k=n}^{N_n} \lambda^n_k \frac{K}{k}\leq \frac{K}{n},
	\end{equation*} 
	so that   $\widetilde{f}_n(X_t)$ converges a.s.~to $f(X_t)$. Similarly, 
	$$ \int_{0}^{t} D\widetilde{f}_n(X_s) \sigma(X_s)dW_s \rightarrow \int_{0}^{t} D f(X_s) \sigma(X_s)dW_s\mbox{ and } \int_{0}^{t} D  \widetilde{f}_n(X_s)  b(X_s) ds \rightarrow \int_{0}^{t} {D  f}(X_s)  b(X_s) ds 
	$$ 
	in 	$L^2(\Omega,\mathcal{F},\mathbb{P})$ as $n \rightarrow \infty$, and therefore a.s.~after possibly considering a subsequence. It thus remains to send $n\to \infty$ in \eqref{eqitofntilde} to obtain the required result.  
\end{proof}

The following adapts \cite[Lemma 2.1]{buc} to our setting, see also \cite{brud,cst05a,cst05b}. 

\begin{lemma}\label{lemmadoubleintsto}
Let $(W_t)_{t \geq 0}$ denote a standard $d$-dimensional Brownian motion on a filtered probability space $(\Omega,\mathcal{F},(\mathcal{F}_t)_{t \geq 0},\mathbb{P})$. Let $\alpha \in \mathbb{R}^d$ and $(\beta_t)_{t\geq 0}$,  $(\gamma_t)_{t\geq 0}$ and  $(\theta_t)_{t\geq 0}$ be  adapted processes taking values respectively in $\mathbb{R}^d$, $\mathbb{M}^{d }$ and $\mathbb{R}$ and satisfying  
	
	\begin{enumerate}[{\rm (1)}]
		\item
		$\beta$ is bounded,
		\item  $\int_{0}^t \|\gamma_s\|^2 ds < \infty$, for all $ t\geq 0$,
		\item
		there exists a random variable $\eta >0$ such that a.s.
		\begin{equation}\label{eqgammacontinuity}
		\int_{0}^{t}\|\gamma_s - \gamma_0\|^2ds = O(t^{1+\eta}) \quad \mbox{for } t \rightarrow 0,
		\end{equation}
		\item
		$\theta$ is a.s.~continuous at $0$.
	\end{enumerate}
	Suppose that for all $t \geq 0$
	\begin{equation}\label{eqintegraledouble}
	\int_{0}^{t} \theta_s ds + \int_{0}^{t} \left( \alpha + \int_{0}^{s} \beta_rdr +  \int_{0}^{s} \gamma_r dW_r \right)^\top  dW_s  \leq 0.
	\end{equation}
	
	Then, 
	\begin{enumerate}[{\rm (a)}]
		\item
		$\alpha=0$,
		\item 
		$-\gamma_0 \in \mathbb{S}^d_+$,
		\item
		$\theta_0-\frac{1}{2}\Tr(\gamma_0) \leq 0$. 
	\end{enumerate}
	
\end{lemma}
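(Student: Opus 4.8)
The plan is to read off the small-time behaviour of the left-hand side of \eqref{eqintegraledouble} by Brownian rescaling, isolating first the $O(t^{1/2})$ contribution and then the $O(t)$ contribution. Write $g(t)$ for the left-hand side and set $N_t:=\int_0^t(\alpha+\int_0^s\beta_r dr+\int_0^s\gamma_r dW_r)^\top dW_s$, so $g(t)=\int_0^t\theta_s ds+N_t\le 0$ for all $t$. For fixed $t>0$ I would introduce the rescaled Brownian motion $\tilde W^{(t)}_u:=t^{-1/2}W_{tu}$, $u\in[0,1]$. Under the changes of variable $s=tu$, $r=tv$, every building block of $g(t)$ acquires a clean power of $t$: one has $\int_0^t\theta_s ds=O(t)$, $\alpha^\top W_t=t^{1/2}\alpha^\top\tilde W^{(t)}_1$, the $\beta$-integral is $O(t^{3/2})$ since $\beta$ is bounded, and the double $\gamma$-integral equals $t\int_0^1(\int_0^u\gamma_{tv}d\tilde W^{(t)}_v)^\top d\tilde W^{(t)}_u=O(t)$. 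Throughout I would condition on $\mathcal F_0$, treating $\alpha,\theta_0,\gamma_0$ as constants.

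For part (a), I divide \eqref{eqintegraledouble} by $t^{1/2}$. Every term except $\alpha^\top\tilde W^{(t)}_1$ tends to $0$ in probability as $t\downarrow 0$: the $\theta$- and $\beta$-parts by the explicit powers of $t$ above, and the double integral because $\int_0^1(\int_0^u\gamma_{tv}d\tilde W^{(t)}_v)^\top d\tilde W^{(t)}_u$ converges in law (using $\gamma_{tv}\to\gamma_0$, legitimate by hypothesis (3)) while being multiplied by $t^{1/2}$. Hence $\alpha^\top\tilde W^{(t)}_1+R_t\le 0$ a.s. with $R_t\to 0$ in probability. Since $\alpha^\top\tilde W^{(t)}_1\sim\mathcal N(0,\|\alpha\|^2)$ for every $t$, if $\alpha\neq 0$ then $\{\alpha^\top\tilde W^{(t)}_1>\varepsilon\}$ has a fixed positive probability while $\mathbb P(R_t>-\varepsilon/2)\to 1$; on the intersection the left-hand side is strictly positive, contradicting $g(t)\le 0$. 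Thus $\alpha=0$.

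For parts (b)–(c), with $\alpha=0$ I instead divide by $t$, so that $t^{-1}g(t)=t^{-1}\int_0^t\theta_s ds+o_{\mathbb P}(1)+\int_0^1(\int_0^u\gamma_{tv}d\tilde W^{(t)}_v)^\top d\tilde W^{(t)}_u$. By Slutsky's lemma this converges in distribution to $\theta_0+Z$, where $Z:=\int_0^1(\gamma_0\tilde W_u)^\top d\tilde W_u$ and $\tilde W$ is a standard Brownian motion on $[0,1]$. Since $t^{-1}g(t)\le 0$ a.s.~for every $t$, the portmanteau theorem applied to the open set $(0,\infty)$ gives $\theta_0+Z\le 0$ a.s. A direct computation of the double Itô integral yields $Z=\tfrac12\big(G^\top\gamma_0 G-\Tr\gamma_0\big)+A$, where $G:=\tilde W_1\sim\mathcal N(0,I_d)$, only the symmetric part of $\gamma_0$ enters the quadratic form and the trace, and $A$ is the mean-zero Lévy-area functional carrying the antisymmetric part of $\gamma_0$. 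I then exploit $\theta_0+Z\le 0$ in three moves. First, the Lévy area has unbounded support even conditionally on $G$, so a nonzero antisymmetric part of $\gamma_0$ would render $Z$ unbounded above, which is impossible; hence $\gamma_0=\gamma_0^\top$ and $A=0$. Second, writing $G^\top\gamma_0 G$ in the eigenbasis of $\gamma_0$ as a weighted sum of independent $\chi^2_1$ variables, any strictly positive eigenvalue would again make $Z$ unbounded above, so $-\gamma_0\in\mathbb S^d_+$, giving (b). Third, the inequality now reads $\theta_0-\tfrac12\Tr\gamma_0\le\tfrac12 G^\top(-\gamma_0)G$ a.s.; since $-\gamma_0\succeq 0$ the right-hand side is non-negative with essential infimum $0$ (approached as $G\to 0$, the origin having positive Gaussian density), whence $\theta_0-\tfrac12\Tr\gamma_0\le 0$, which is (c).

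The main obstacle will be the careful justification that the remainder produced by replacing $\gamma_r$ by $\gamma_0$, by the $\beta$-integral, and by $\theta_s-\theta_0$ is genuinely $o_{\mathbb P}(1)$ after dividing by $t$; this is precisely where hypotheses (1)–(4) are used, the Hölder-type control (3) being tailored so that $\int_0^t(\int_0^s(\gamma_r-\gamma_0)dW_r)^\top dW_s=o(t)$ (a BDG estimate combined with (3)). A secondary subtlety worth stressing is that none of the three conclusions can be obtained by taking expectations—$\mathbb E[Z]=0$ would only yield the weaker $\theta_0\le 0$—so they must instead be extracted from the pathwise inequality $\theta_0+Z\le 0$ through the full support properties of the Gaussian quadratic form (unboundedness above, essential infimum zero) and of the Lévy area.
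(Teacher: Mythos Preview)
Your overall strategy coincides with the paper's: extract the leading contributions $\alpha^\top W_t$ and $\int_0^t(\gamma_0 W_s)^\top dW_s$, show that the three remainders (from $\theta_s-\theta_0$, from $\beta$, and from $\gamma_s-\gamma_0$) are $o(t)$, and then read off (a)--(c) from the small-time behaviour of the leading part. The paper does this last step by invoking \cite[Lemma~2.1]{buc}; your Brownian rescaling, convergence in distribution, portmanteau, and analysis of $Z=\tfrac12(G^\top\gamma_0G-\Tr\gamma_0)+A$ amount to reproving that lemma, and the argument is correct (the claim that the L\'evy-area part has unbounded support conditionally on $G=\tilde W_1$ holds, e.g.\ via the support theorem for the Brownian bridge).

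The one place where your sketch is too quick is the remainder $R^3_t:=\int_0^t\bigl(\int_0^s(\gamma_r-\gamma_0)dW_r\bigr)^\top dW_s$. You propose ``a BDG estimate combined with~(3)'', but hypothesis~(3) is a \emph{pathwise} bound $\int_0^t\|\gamma_s-\gamma_0\|^2\,ds=O(t^{1+\eta})$ with \emph{random} exponent $\eta$ and random implicit constant; you cannot simply take expectations and feed this into a moment inequality. A localisation would repair this, but the paper's route is cleaner and matches the pathwise nature of the assumption: by Dambis--Dubins--Schwarz each component of $M_s:=\int_0^s(\gamma_r-\gamma_0)dW_r$ is a time-changed Brownian motion with clock $O(s^{1+\eta})$, so the law of the iterated logarithm gives $\|M_s\|^2=O(s^{1+\eta/2})$ a.s.; iterating the same DDS/LIL step on $R^3$ then yields $R^3_t=o(t)$ a.s. This almost-sure conclusion is stronger than the convergence in probability you need for the portmanteau step, but it comes for free once you pair the pathwise hypothesis with a pathwise tool.
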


\begin{proof}	Since $(W^i_t)^2 = 2 \int_0^t W^i_s d W^i_s + t$,   \eqref{eqintegraledouble} reduces to 
	\begin{equation*}
	(\theta_0- \frac{1}{2} \Tr(\gamma_0)) t + \sum_{i=1}^d \alpha^i W^i_t + \sum_{i=1}^d \frac{\gamma^{ii}_0}{2} (W^i_t)^2 + \sum_{1 \leq i \neq j \leq d} \gamma^{ij}_0 \int_0^t W^i_s d W^j_s + R_t \leq 0,
	\end{equation*}
where
	\begin{eqnarray*}
	R_t &=& \int_{0}^{t} (\theta_s - \theta_0) ds + \int_{0}^{t} \left(  \int_{0}^{s} \beta_rdr \right)^\top  dW_s  +  \int_{0}^{t}\left(\int_{0}^{s} (\gamma_r -\gamma_0) dW_r \right)^\top  dW_s \\
	 &=:&  R^1_t + R^2_t + R^3_t.
	\end{eqnarray*}
	In view of    \cite[Lemma 2.1]{buc}, it suffices to show that  $R_t/t \rightarrow 0$ almost surely. To see this, first note that $R^1_t = o(t)$  a.s.~since $\theta$ is continuous at $0$. Moreover, \cite[Proposition 3.9]{cst05a} implies that $R^2_t = o(t)$ a.s., as $\beta$ is bounded.  
	
	It remains to prove that $R^3_t = o(t)$ a.s. To see this, define $M^{ij}=\gamma^{ij} - \gamma^{ij}_0$
	and $M^i=\int_0^\cdot \sum_{j=1}^{d}M^{ij}_r dW^j_r$ for all $1\leq i,j\leq d$. The continuity assumption \eqref{eqgammacontinuity} implies that  $\langle M^i \rangle_s = O(s^{1+\eta})$ almost surely. By the Dambis-Dubins-Schwarz theorem, $(M^i_s)_{s\geq 0}$ is therefore a time-changed Brownian motion, see e.g.~\cite[Theorem V.1.6]{rev}. By the law of iterated logarithm for Brownian motion $(M^i_s)^2 = O(s^{1+\frac{\eta}{2}})$ almost surely. Hence, $\langle R^3\rangle_t= O(t^{2+\frac{\eta}{2}})$ almost surely. By applying the Dambis-Dubin-Schwarz theorem and the law of iterated logarithm again, we obtain that   $R^3_t = o(t)$ a.s.
\end{proof}

\subsection{The general case}\label{Sec: change state space}

We can now turn to the general case. 

\begin{proposition}[Necessary conditions of Theorem \ref{MainTheorem}]\label{propnecessity}
	Let the conditions of Theorem \ref{MainTheorem}  hold and assume that $\calD$  is \textit{stochastically invariant} with respect to the diffusion \eqref{diffusionsdeinvariance}. Then conditions \eqref{ourcondcov} and \eqref{ourconddrift} hold for all $x \in \calD$ and  $u \in \mathcal{N}^{1}_{\mathcal{D}}(x)$. 
\end{proposition}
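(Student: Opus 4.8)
The plan is to reduce the general case to Lemma \ref{lemmadistincteigen}, which already handles points where the nonzero eigenvalues of $C$ are simple and admit a locally smooth spectral decomposition. The obstacle in the general setting is exactly the one flagged in the text: when $C(x)$ has repeated nonzero eigenvalues, the eigenprojections (and hence the map $\bar\sigma$) can fail to be $\calC^{1,1}$ in a neighborhood of $x$, so Magnus's theorem \cite{mag85} no longer applies and the stochastic Taylor expansion argument breaks down. My strategy is therefore to \emph{perturb} the diffusion by adding a deterministic quadratic-in-state correction to the covariance that splits the repeated eigenvalues, carry out the invariance argument for the perturbed process, and then send the perturbation to zero.

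Concretely, fix $x\in\Dc$ and $u\in\mathcal{N}^1_{\Dc}(x)$. Using the spectral decomposition $C(x)=Q\diag{\lambda_1,\ldots,\lambda_d}Q^\top$ (with the $\lambda_i$ now possibly repeated), I would introduce, for small $\eps>0$, a new covariance field
\begin{equation*}
C_\eps(y):=C(y)+\eps\, Q\,\diag{\mu_1(y),\ldots,\mu_d(y)}\,Q^\top,
\end{equation*}
where the $\mu_i$ are chosen (e.g.\ affine or quadratic functions of the coordinate $Q^\top(y-x)$, designed to vanish appropriately so as not to disturb the kernel constraint at $x$) so that at $x$ the perturbed nonzero eigenvalues become \emph{distinct} while $\Ker C_\eps(x)=\Ker C(x)$ and $C_\eps$ stays $\calC^{1,1}_{loc}$. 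The point is that $C_\eps$ now satisfies the hypotheses of Lemma \ref{lemmadistincteigenmatrix} at $x$, so the smooth-spectral-decomposition machinery of Lemma \ref{lemmadistincteigen} is available for the diffusion driven by $C_\eps$. I would then need to verify two things: first, that the perturbed set is still stochastically invariant (or, more carefully, that the inequality \eqref{eq: for applying small time behavior} underlying the argument persists for the perturbed generator — this is really a statement about the generator, via the positive maximum principle / test-function inequality, not about re-solving an SDE), and second, that the conclusion \eqref{ourcondcov}--\eqref{ourconddrift} for $C_\eps$ converges to the one for $C$ as $\eps\downarrow 0$.

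The cleaner route, and the one I would actually pursue, is to work at the level of the state-space change rather than the covariance: since the trouble is only the lack of a \emph{smooth} eigenbasis, I would apply a $\calC^{1,1}$ diffeomorphism $\Psi$ near $x$, chosen so that in the new coordinates $z=\Psi(y)$ the pushed-forward covariance $\tilde C(z)=D\Psi\,C\,D\Psi^\top\circ\Psi^{-1}$ has nonzero eigenvalues that are distinct at $\Psi(x)$ — this is possible locally because $D\Psi$ can rotate and rescale the degenerate eigenspace, and the text explicitly announces this ``additional transformation of the state space.'' Stochastic invariance is preserved under a $\calC^{1,1}$ diffeomorphism (the image set $\Psi(\Dc)$ is invariant for the transformed diffusion, whose coefficients inherit \eqref{growthconditions}-\eqref{eq: extension C}), and the normal cone and the drift condition transform covariantly, so Lemma \ref{lemmadistincteigen} applied at $\Psi(x)$ for $\tilde C$ yields \eqref{ourcondcov}--\eqref{ourconddrift} there; pulling back via $\Psi^{-1}$ gives the desired conditions at $x$.

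The main obstacle will be bookkeeping the transformation laws: I must check that the specific quantity $\sum_j DC^j(x)(CC^+)^j(x)$ — which involves the pseudoinverse and hence the image of $C$ — transforms correctly under $\Psi$, since $D\Psi$ deforms the image/kernel splitting and the Moore–Penrose inverse does not behave as simply as a true inverse under congruence. I expect to handle this by exploiting the projection reformulation of Remark \ref{rem : projection}, namely $\langle u,\sum_j DC^j(CC^+)^j\rangle=\langle u,\sum_{j=1}^r D_{q_j}(Cq_j)\rangle$, because the directional-derivative-along-eigenvectors form is manifestly compatible with a change of basis; verifying that this identity is respected by $\Psi$ (together with the kernel constraint \eqref{ourcondcov}, which kills the transverse terms) is the crux. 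Once the two conditions are shown invariant under the diffeomorphism, the proposition follows by combining the local reduction with Lemma \ref{lemmadistincteigen}, and the reduction from proximal normals to all of $\mathcal{N}^1_\Dc(x)$ via \eqref{eq:normallimsup} closes the argument.
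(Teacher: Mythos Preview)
Your second route---a state-space change to split repeated eigenvalues, then apply Lemma~\ref{lemmadistincteigen} and pull back---is exactly the paper's strategy. What you are missing is the specific choice of transformation that makes the pullback trivial. The paper takes $\Psi$ to be the \emph{linear} map
\[
A^\eps = Q(x)\,\diag{\sqrt{(1-\eps)},\sqrt{(1-\eps)^2},\ldots,\sqrt{(1-\eps)^d}}\,Q(x)^\top,
\]
where $Q(x)$ diagonalizes $C(x)$. Then $C_\eps(x^\eps)=A^\eps C(x)(A^\eps)^\top$ has eigenvalues $(1-\eps)^i\lambda_i(x)$, which are distinct whenever nonzero, so Lemma~\ref{lemmadistincteigen} applies at $x^\eps=A^\eps x$ on $A^\eps\Dc$. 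The normal cone transforms as $\mathcal N^1_{A^\eps\Dc}(x^\eps)=(A^\eps)^{-1}\mathcal N^1_\Dc(x)$ (immediate from the definition), and since $A^\eps\to I_d$ as $\eps\to0$, one simply sends $\eps\to0$ in the resulting inequalities by continuity---no pullback computation is needed at all.

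The ``bookkeeping'' you flag as the crux is therefore a non-issue under this choice: because $A^\eps$ and $C(x)$ share eigenvectors, one has $C_\eps(x^\eps)C_\eps(x^\eps)^+=C(x)C(x)^+$ identically in $\eps$, and all other quantities ($b_\eps$, $DC_\eps$, $u_\eps$) converge to their unperturbed counterparts as $A^\eps\to I_d$. Your worry that the Moore--Penrose inverse does not behave well under congruence is legitimate for a generic $\Psi$, but the paper's $A^\eps$ is engineered precisely so that the projection $CC^+$ is unchanged and everything else limits trivially. Your first route (perturbing $C$ directly) is indeed problematic for the reason you note: you would lose the stochastic invariance input.
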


\begin{proof}  If $x$ lies in the interior of $\mathcal{D}$, then $\mathcal{N}^{1}_{\mathcal{D}}(x)=\{0\}$ and there is nothing to prove. 
We therefore assume from now on that   $x \in \partial\mathcal{D}$. Let 	$\Lambda$ and $Q$ be defined through the spectral decomposition of $C$, as in \eqref{eq: sectral decompo} but with only  $\lambda_{1}(x)\ge \cdots\ge \lambda_{d}(x)$. We shall perform a change of   variable to reduce to the conditions of Lemma \ref{lemmadistincteigen}. To do this, we fix $0 < \epsilon < 1$ and define 
\begin{equation*}
 A^{\epsilon} = Q(x) \diag{\sqrt{(1-\epsilon)},\sqrt{(1-\epsilon)^2},\ldots,\sqrt{(1-\epsilon)^d}} Q(x)^\top. 
\end{equation*}

Since $\mathcal{D}$ is invariant with respect to the diffusion $X$, $ \mathcal{D}^{\epsilon}:=A^{\epsilon} \mathcal{D}$ is invariant with respect to the diffusion $X^{\epsilon}:= A^{\epsilon}X$.  Note that 
$$
dX^{\epsilon}=b_{\epsilon}(X^{\epsilon})dt+C_{\eps}(X^{\epsilon})^{\frac12}dW
$$
in which
\begin{eqnarray*}
b_{\epsilon}  :=  A^{\epsilon} b( (A^{\epsilon})^{-1} \cdot) &\mbox{ and }&
C_{\epsilon} := A^{\epsilon}  C( (A^{\epsilon})^{-1} \cdot)   ( A^{\epsilon})^\top
\end{eqnarray*}
have the same regularity and growth as $b$ and $C$. Moreover,   the positive eigenvalues of $C_{\epsilon}$ are all distinct at $x^{\epsilon}:=A^{\epsilon} x$, as $C_{\epsilon}(x^{\eps})=Q(x) \diag{{(1-\epsilon)}\lambda_{1}(x),\ldots,{(1-\epsilon)^d}\lambda_{d}(x)} Q(x)^\top$. We can therefore apply Lemma \ref{lemmadistincteigen} to $(X^{\epsilon},\mathcal{D}^{\epsilon})$:  
	\begin{subnumcases}{}
C_{\epsilon}(x^{\epsilon})u_{\epsilon}=0 \label{eqcase(ii)cov}\\
	\langle  u_{\epsilon}, b_{\epsilon}(x^{\epsilon})-\frac{1}{2} \sum_{j=1}^{d} D C_{\epsilon}^j(x^{\epsilon})(C_{\epsilon}C_{\epsilon}^+)^j(x^{\epsilon})    \rangle \leq 0 \label{eqcase(ii)drift}
	\end{subnumcases}
for all  $u_{\epsilon} \in \mathcal{N}^{1}_{A^{\epsilon} \mathcal{D}}(x^{\epsilon})$. 
We  now easily verify that $ \mathcal{N}^{1}_{A^{\epsilon}\mathcal{D}} (x^{\epsilon}) = (A^{\epsilon})^{-1} \mathcal{N}^{1}_{\mathcal{D}}(x)$, recall the definition in \eqref{eq: def first order cone}. Finally, by sending $\epsilon \rightarrow 0$ in  \eqref{eqcase(ii)cov} and \eqref{eqcase(ii)drift}, we get by continuity: 
	\begin{subnumcases}{}
C(x)u=0 \nonumber\\ 
	\langle  u, b(x)-\frac{1}{2} \sum_{j=1}^{d} D C^j(x)(CC^+)^j(x)    \rangle \leq 0,\nonumber
	\end{subnumcases}
	for all $u\in \mathcal{N}^{1}_{\mathcal{D}}(x)$, 
which ends the proof.
\end{proof}


\section{Sufficient conditions}\label{SectionSuf}

In this section, we prove that the necessary conditions of Proposition \ref{propnecessity} are also sufficient. We start by showing in Proposition \ref{propmaximum} that   \eqref{ourcondcov} and \eqref{ourconddrift}  imply that the generator $\mathcal{L}$ of $X$ satisfies the \textit{positive maximum principle}: $\mathcal{L}\phi(x) \leq 0$ for any $x \in \mathcal{D}$ and any function $\phi \in \mathcal{C}^2(\mathbb{R}^d,\mathbb{R})$  such that $\displaystyle\max_{ \mathcal{D}} \phi=\phi(x) \geq 0$, see e.g.~\cite[p165]{eth}. Then, classical arguments, mainly \cite[Theorem 4.5.4]{eth}, yield the existence of a solution to the corresponding martingale problem  that stays in $\mathcal{D}$, see  Proposition \ref{propsufficiency} below. 
\vs2

The following proposition is inspired by \cite[Remark 5.6]{da}. 

\begin{proposition}\label{propmaximum}
	Under the assumptions of Theorem \ref{MainTheorem}, assume that   \eqref{ourcondcov}-\eqref{ourconddrift} hold for all $x \in \calD$ and  $u \in {\mathcal{N}^{1}_{\mathcal{D}}(x)}$. Then, the generator $\mathcal{L}$  satisfies the \textit{positive maximum principle}. 
\end{proposition}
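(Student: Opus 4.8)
The plan is to verify the positive maximum principle directly, in the spirit of the smooth argument of Da Prato and Frankowska but reformulated through the covariance $C$. Fix $x \in \mathcal{D}$ and $\phi \in \mathcal{C}^2(\mathbb{R}^d,\mathbb{R})$ with $\max_{\mathcal{D}}\phi=\phi(x)\ge 0$, and set $u:=D\phi(x)^\top$. Since $\phi(y)\le\phi(x)$ for all $y\in\mathcal{D}$, a first-order Taylor expansion gives $\langle u, y-x\rangle \le o(\|y-x\|)$, so that $u\in\mathcal{N}^1_{\mathcal{D}}(x)$ by \eqref{eq: def first order cone}. Hence \eqref{ourcondcov} applies and yields $C(x)u=0$. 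Writing the spectral decomposition $C(x)=\sum_{j=1}^{r}\lambda_j q_j q_j^\top$ with $\lambda_j>0$ and $(q_j)$ orthonormal as in Remark \ref{rem : projection}, this forces $q_j^\top u=0$ for every $j\le r$. Since $\Lc\phi(x)=\langle u, b(x)\rangle + \frac12\Tr[C(x)D^2\phi(x)] = \langle u,b(x)\rangle + \frac12\sum_{j=1}^{r}\lambda_j\, q_j^\top D^2\phi(x) q_j$, the whole difficulty lies in controlling the second-order term, as $\sigma$ is not available as a smooth vector field.

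To this end I would, for each $j\le r$, introduce the autonomous ODE $\dot\zeta_j = C(\zeta_j)q_j$ with $\zeta_j(0)=x$. The vector field $y\mapsto C(y)q_j$ (with $q_j$ the fixed eigenvector of $C(x)$) is locally Lipschitz by \eqref{eq: extension C}, so the flow is well defined. The key point is that this flow leaves $\mathcal{D}$ invariant: for any $y\in\mathcal{D}$ and any proximal normal $u'\in\mathcal{N}^{1,prox}_{\mathcal{D}}(y)$, condition \eqref{ourcondcov} gives $\langle C(y)q_j, u'\rangle = q_j^\top C(y) u' = 0 \le 0$, which is exactly the Nagumo-type criterion for strong invariance of a closed set under a Lipschitz field, stated through proximal normals. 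Thus $\zeta_j(t)\in\mathcal{D}$ for all small $t\ge 0$.

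Since $t=0$ maximises $t\mapsto\phi(\zeta_j(t))$ over $\mathcal{D}$, and $\dot\zeta_j(0)=C(x)q_j=\lambda_j q_j$ with $\langle u,\lambda_j q_j\rangle=0$, the first derivative vanishes; because the resulting second-order term is even in $t$, the one-sided inequality $\phi(\zeta_j(t))\le\phi(x)$ already forces the second derivative to be nonpositive. Using $\ddot\zeta_j(0)=\lambda_j D_{q_j}(Cq_j)(x)$ this reads
\[
\lambda_j^2\, q_j^\top D^2\phi(x) q_j + \lambda_j\,\langle u, D_{q_j}(Cq_j)(x)\rangle \le 0 .
\]
Dividing by $\lambda_j>0$ and summing over $j\le r$ bounds $\Tr[C(x)D^2\phi(x)]=\sum_{j=1}^{r}\lambda_j q_j^\top D^2\phi(x)q_j$ from above by $-\langle u,\sum_{j=1}^{r}D_{q_j}(Cq_j)(x)\rangle$. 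By the computation of Remark \ref{rem : projection} this last sum equals $\langle u,\sum_{j=1}^{d}DC^j(x)(CC^+)^j(x)\rangle$, so that, invoking \eqref{ourconddrift},
\[
\Lc\phi(x) \le \langle u,\, b(x)-\tfrac12\textstyle\sum_{j=1}^{d}DC^j(x)(CC^+)^j(x)\rangle \le 0 .
\]

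The main obstacle is the invariance step: one must justify rigorously that the proximal condition $\langle C(y)q_j,u'\rangle\le 0$ implies that the Lipschitz flow of $C(\cdot)q_j$ keeps $\zeta_j$ inside the closed, possibly nonsmooth set $\mathcal{D}$, with due care for the contingent-versus-proximal normal cone distinction. This is exactly where the regularity \eqref{eq: extension C} of $C$ (rather than of $\sigma$) is essential, since it makes $C(\cdot)q_j$ Lipschitz even at boundary points where $\sigma$ degenerates; conveniently, the symmetry in $t$ of the second-order term removes any need for two-sided invariance of the curves.
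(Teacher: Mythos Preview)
Your proposal is correct and follows essentially the same route as the paper. Both arguments run, for fixed directions $v$, the deterministic flow $\dot y=C(y)v$ (the paper takes $v=\sigma(x)^+e_j$, you take the eigenvectors $v=q_j$ of $C(x)$; these span the same subspace), establish forward invariance of $\mathcal{D}$ for this Lipschitz flow from \eqref{ourcondcov}, and read off a second-order inequality by Taylor expansion. The paper phrases this through the second-order normal cone $\mathcal{N}^2_{\mathcal{D}}(x)$ evaluated at $y(\sqrt h)$, whereas you differentiate $t\mapsto\phi(\zeta_j(t))$ directly; these are equivalent formulations, and the ``evenness'' you invoke is just the observation that $g'(0)=0$ together with $g(t)\le g(0)$ for $t\ge 0$ forces $g''(0)\le 0$. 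The invariance step you flag as the main obstacle is exactly what the paper disposes of by citing \cite[Proposition 2.5]{da} together with \eqref{eq:normallimsup}; note also that since \eqref{ourcondcov} is assumed for all of $\mathcal{N}^1_{\mathcal{D}}(y)$, not just proximal normals, the Nagumo condition $\langle C(y)q_j,u'\rangle=0$ holds for every $u'\in\mathcal{N}^1_{\mathcal{D}}(y)$ directly, so no contingent-versus-proximal subtlety actually arises.
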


\begin{proof} 
We fix $x \in \mathcal{D}$. For $1 \leq j \leq d$, let us consider the following deterministic control system:
\begin{equation}\label{detcontrolsystem2}\begin{cases}
y'(t)=C(y(t))  \sigma(x)^+ e_j \\
y(0)=x,
\end{cases}
\end{equation}
where $\sigma(x)^{+}$ is the  pseudoinverse  of $\sigma(x)$. Since $C$ is locally Lipschitz and verifies condition (\ref{ourcondcov}), \cite[Proposition 2.5]{da} {combined with \eqref{eq:normallimsup}}  implies that  $\mathcal{D}$ is  invariant with respect to the deterministic control system \eqref{detcontrolsystem2}. Then, by definition of the second order normal cone in \eqref{eq: def second order cone}, 
\begin{equation*}
\langle u , y(\sqrt{h})-x \rangle + \frac{1}{2}  \langle v(y(\sqrt{h})-x) , y(\sqrt{h})-x \rangle \leq o(||y(\sqrt{h})-x||^2)
\end{equation*}
 for any $(u,v) \in  \mathcal{N}^2_{\mathcal{D}}(x) $.
On the other hand, since $C$ is $\mathcal{C}^{1,1}_{loc}$, a Taylor expansion around $0$ yields
\begin{equation*}
y(\sqrt{h}) = x +\sqrt{h} C(x) \sigma(x)^+ e_j  + \frac{h}{2} (e_j^\top \sigma(x)^+ \otimes I_d)D   C(x)C(x)  \sigma(x)^+ e_j  + o(h),
\end{equation*}
recall Proposition \ref{propdifferentiationrules} and note that $(\sigma^{+})^{\top}=\sigma^{+}$ since $\sigma$ is symmetric. 
Now observe that $u\in \mathcal{N}^{1}_{\mathcal{D}}(x)$  whenever $(u,v)\in   \mathcal{N}^2_{\mathcal{D}}(x) $. In particular,  $u^\top C(x)=0$ under \eqref{ourcondcov}. Combining the above, and recalling Proposition \ref{propappendixformulas} then leads to 
\begin{equation*}\label{tempinegaliteh}
\frac{h}{2} e_j^\top(\sigma(x)^+ \otimes u^\top)D   C(x)C(x)  \sigma(x)^+ e_j  + \frac{h}{2} e_j^\top \sigma(x)^+  C(x) v C(x) \sigma(x)^+ e_j \leq o(h).
\end{equation*}
Note that $\sigma^{+}\sigma^{+}=C^{+}$ and that $C\sigma^{+}\sigma^{+}C=CC^{+}C=C$, see e.g.~Definition \ref{defpseudoinverse} and Proposition \ref{proppseudoinversespectral}, {and recall that $(\sigma(x)^+ \otimes u^\top)=\sigma(x)^+(I_{d} \otimes u^\top)$ by Proposition \ref{propappendixformulas}}. Then, dividing the above by $h/2$ and sending $h\to 0$ before summing over $1 \leq j \leq d$ yields
\begin{equation*}
\Tr\left((I_d \otimes u^\top )D   C(x)C(x)  C(x)^+   \right) + \Tr\left( v C(x) \right)\leq 0.
\end{equation*}
In view of \eqref{ourconddrift} and \eqref{eq: trace prod vec}, this shows that 
\begin{equation*}
\langle b(x), u \rangle + \frac{1}{2} \Tr(vC(x)) \leq \langle u, b(x)-\frac{1}{2} \sum_{j=1}^{d} D C^j(x) (CC^+)^j(x) \rangle \leq 0
\end{equation*}
for all $(u,v) \in \mathcal{N}^2_{\mathcal{D}}(x)$.
To conclude, it remains to observe that  $(D \phi(x),D ^2\phi(x)) \in \mathcal{N}^2_{\mathcal{D}}(x)$ whenever $\phi \in \mathcal{C}^2(\mathbb{R}^d,\mathbb{R})$  is such that $\displaystyle\max_{  \mathcal{D}} \phi=\phi(x) \geq 0$. Hence, $\mathcal{L}\phi(x) \leq 0$.  
\end{proof}

\begin{proposition}[Sufficient conditions of Theorem \ref{MainTheorem}]\label{propsufficiency}
	Under the assumptions of Theorem \ref{MainTheorem}, assume that conditions \eqref{ourcondcov} and \eqref{ourconddrift} hold for all $x \in \calD$ and  $u \in \mathcal{N}^{1}_{\mathcal{D}}(x)$. Then, $\calD$ is \textit{stochastically invariant} with respect to the diffusion \eqref{diffusionsdeinvariance}. 
\end{proposition}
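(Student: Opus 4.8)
The plan is to derive stochastic invariance from the positive maximum principle already established in Proposition \ref{propmaximum}, using the abstract existence theory for martingale problems and then converting the resulting solution into a genuine weak solution of \eqref{diffusionsdeinvariance}. Fix $x\in\mathcal{D}$. Since $\mathcal{D}$ is a closed subset of $\mathbb{R}^d$, it is a locally compact separable metric space, so its one-point compactification $\mathcal{D}^{\Delta}$ is compact metric. I would take as domain for $\mathcal{L}$ the restrictions to $\mathcal{D}$ of functions in $\mathcal{C}^{\infty}_c(\mathbb{R}^d,\mathbb{R})$: these map into $C_c(\mathcal{D})\subset C_0(\mathcal{D})$ (as $b$ and $C$ are continuous, hence bounded on compacts), they form a point-separating subalgebra of $C_0(\mathcal{D})$ and are therefore dense by Stone--Weierstrass, and by Proposition \ref{propmaximum} the operator $\mathcal{L}$ on this domain satisfies the positive maximum principle on $\mathcal{D}$ (if such a $\phi$ attains a nonnegative maximum over $\mathcal{D}$ at a point $y$, then $\max_{\mathcal{D}}\phi=\phi(y)\geq0$, whence $\mathcal{L}\phi(y)\leq0$). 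Applying \cite[Theorem 4.5.4]{eth} to the initial law $\delta_x$ then produces a solution of the martingale problem for $\mathcal{L}$ with sample paths in $D_{\mathcal{D}^{\Delta}}[0,\infty)$, that is, a $\mathcal{D}^{\Delta}$-valued c\`adl\`ag process $X$ with $X_0=x$ such that $\phi(X_t)-\int_0^t\mathcal{L}\phi(X_s)\,ds$ is a martingale for every $\phi$ in the domain.

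Next I would rule out the cemetery point and establish path continuity. Applying the a priori argument behind \eqref{holdertrajectories} to a localising sequence of test functions approximating $y\mapsto\|y\|^2$, the linear growth bound \eqref{growthconditions} yields through Gronwall's lemma a uniform second-moment estimate on the stopped process; this shows non-explosiveness, so that almost surely $X_t\in\mathcal{D}$ (rather than $\Delta$) for all $t\geq0$. Path continuity follows because $\mathcal{L}$ is a purely second-order diffusion operator with no jump part: testing against the coordinate functions and their products $y^iy^j$ identifies the carr\'e-du-champ of $\mathcal{L}$ as $(D\phi)^{\top}C(D\phi)$, which has an absolutely continuous compensator and no jump contribution, forcing the associated martingales, and hence $X$, to have continuous sample paths (up to the usual modification).

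I would then convert this martingale-problem solution into a weak solution of \eqref{diffusionsdeinvariance}. Testing with the coordinate functions shows that $M:=X-x-\int_0^{\cdot}b(X_s)\,ds$ is a continuous $\mathbb{R}^d$-valued local martingale, and testing with the products $y^iy^j$ identifies its quadratic covariation as $\langle M^i,M^j\rangle=\int_0^{\cdot}C^{ij}(X_s)\,ds$. By the standard representation theorem for continuous local martingales with absolutely continuous covariation---after possibly enlarging the probability space to absorb the degeneracy of $C$---there exists a $d$-dimensional Brownian motion $W$ with $M=\int_0^{\cdot}\sigma(X_s)\,dW_s$, where $\sigma=C^{1/2}$ on $\mathcal{D}$, recall Remark \ref{rmkchoiceofsigma}; see e.g.\ \cite{ike}. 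Thus $(X,W)$ is a weak solution of \eqref{diffusionsdeinvariance} with $X_0=x$ remaining in $\mathcal{D}$ for all $t\geq0$ almost surely, and since $x\in\mathcal{D}$ was arbitrary, $\mathcal{D}$ is stochastically invariant in the sense of Definition \ref{def: stock inv}.

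The main obstacle is the interface between the abstract existence result and the concrete diffusion. One must correctly handle the non-compactness of $\mathcal{D}$ via the one-point compactification and discard the cemetery point through the growth condition, and one must manufacture the driving Brownian motion in spite of the possible degeneracy of $C$ on $\partial\mathcal{D}$ (where $\sigma$ has a nontrivial kernel), which is exactly what forces the enlargement of the probability space in the representation step. Establishing sample-path continuity from the c\`adl\`ag regularity provided by \cite[Theorem 4.5.4]{eth}, while routine for diffusion generators, likewise requires the short carr\'e-du-champ argument above rather than being automatic.
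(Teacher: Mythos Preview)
Your proposal is correct and follows essentially the same route as the paper: invoke Proposition \ref{propmaximum} to get the positive maximum principle, apply \cite[Theorem 4.5.4]{eth} to obtain a c\`adl\`ag $\mathcal{D}^{\Delta}$-valued solution of the martingale problem, use the linear growth condition \eqref{growthconditions} to rule out explosion and obtain continuous $\mathcal{D}$-valued paths, and finally pass to a weak solution of \eqref{diffusionsdeinvariance}. The only difference is packaging: where you sketch the non-explosion/continuity argument by hand and appeal to the martingale representation theorem in \cite{ike}, the paper simply cites the discussion preceding \cite[Proposition 3.2]{CFY05} together with \cite[Proposition 5.3.5]{eth} for the continuous $\mathcal{D}$-valued modification, and \cite[Theorem 5.3.3]{eth} for the passage from the martingale problem to a weak solution.
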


\begin{proof} We already know from Proposition \ref{propmaximum} that $\mathcal{L}$ satisfies the  {positive maximum principle}. Then,  \cite[Theorem 4.5.4]{eth} yields the existence of a solution to the martingale problem associated to $\mathcal{L}$ with sample paths in the space of càdlàg functions with values in $\mathcal{D}^{\Delta}:=\mathcal{D} \cup \{\Delta\}$, the one-point compactification of $\mathcal{D}$. 
The discussion preceding \cite[Proposition 3.2]{CFY05} and \cite[Proposition 5.3.5]{eth}, recall our  linear growth conditions \eqref{growthconditions}, then  shows that the solution has a modification with continuous sample paths in $\mathcal{D}$. Finally, \cite[Theorem 5.3.3]{eth} implies the existence of a weak solution $(X,W)$ such that $X_t \in \mathcal{D}$ for all $t \geq 0$ almost surely. 

\end{proof}
 
\section{A  generic   application}\label{SectionExamples}
 
We show in this section how   Theorem \ref{MainTheorem} can be applied in various examples of application. We restrict to a two-dimensional setting for ease of computations and notations. 
\vs2

We first provide a generic tractable characterization for the stochastic invariance of all state spaces $\calD \subset \bbR^2$ of the following form:
\begin{equation}\label{eq:Dexample}
\calD=\{(\bar x,\widetilde x) \in \bbR^2, \bar x \in \calD_1 \mbox{ and } \phi(\bar x,\widetilde x) \in  \calD_2\},
\end{equation} 
where $\calD_1 \subset \bbR$ and $\calD_2 \subset \bbR$ are closed subsets and $\phi$ is a continuously differentiable function. 
 
Then, $\calD$ can be characterized through 
$
\Phi: (\bar x,\widetilde x) \mapsto (\bar x, \phi(\bar x,\widetilde x))
$
by 
 $$
 \calD= \Phi^{-1}(\calD_1 \times  \calD_2), 
 $$ 
 and \cite[Exercise 6.7 and Proposition 6.41]{roc} provides the following description of the normal cone whenever
\begin{equation}\label{eq: Hx}
\mbox{  $\Phi$ is differentiable at $x$ and   its Jacobian $D  \Phi(x)$ has full rank} \tag{$H_x$}
\end{equation}
 holds at any point $x\in \Dc$.
 
\begin{proposition}\label{prop:conenormalapplication} Fix $x=(\bar x,\widetilde x) \in \mathcal{D}$ such that  \reff{eq: Hx} holds. Then, 
	\begin{equation*}
	\calN^1_{\calD}(x) = \left \{  \left( \begin{array}{c} \bar{u} + \partial_1 \phi(x) \widetilde u     \\  \partial_2 \phi(x) \widetilde u  \end{array} \right), \bar u \in \calN^1_{ \calD_1}(\bar x) \mbox{ and } \widetilde u \in \calN^1_{ \calD_2}(\phi(\bar x, \widetilde x)) \right\},
	\end{equation*}
	in which $\partial_{i}\phi$ is the derivative with respect to the $i$-th component. 
\end{proposition}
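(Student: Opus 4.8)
The plan is to exploit the characterization of $\calD$ as a preimage $\calD = \Phi^{-1}(\calD_1 \times \calD_2)$ and apply the chain rule for normal cones under a smooth change of variables. The reference \cite[Exercise 6.7 and Proposition 6.41]{roc} gives precisely the tool needed: whenever $\Phi$ is smooth and $D\Phi(x)$ has full rank (which is exactly the constraint qualification \reff{eq: Hx}), the normal cone to the preimage satisfies
\begin{equation*}
\calN^1_{\calD}(x) = D\Phi(x)^\top \, \calN^1_{\calD_1 \times \calD_2}(\Phi(x)).
\end{equation*}
So the first step is simply to invoke this result, which transfers the whole problem to computing two easier objects: the Jacobian $D\Phi(x)^\top$ and the normal cone to the product set $\calD_1 \times \calD_2$ at the image point $\Phi(x) = (\bar x, \phi(\bar x, \widetilde x))$.

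Next I would compute the Jacobian of $\Phi: (\bar x, \widetilde x) \mapsto (\bar x, \phi(\bar x, \widetilde x))$ directly. Since the first component is the identity in $\bar x$ and independent of $\widetilde x$, while the second is $\phi$, one has
\begin{equation*}
D\Phi(x) = \begin{pmatrix} 1 & 0 \\ \partial_1 \phi(x) & \partial_2 \phi(x) \end{pmatrix},
\quad\text{hence}\quad
D\Phi(x)^\top = \begin{pmatrix} 1 & \partial_1 \phi(x) \\ 0 & \partial_2 \phi(x) \end{pmatrix}.
\end{equation*}
In parallel, I would use the standard fact that the normal cone to a product of closed sets is the product of the normal cones, so $\calN^1_{\calD_1 \times \calD_2}(\bar x, \phi(x)) = \calN^1_{\calD_1}(\bar x) \times \calN^1_{\calD_2}(\phi(\bar x, \widetilde x))$. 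Writing a generic element of this product as $(\bar u, \widetilde u)$ with $\bar u \in \calN^1_{\calD_1}(\bar x)$ and $\widetilde u \in \calN^1_{\calD_2}(\phi(x))$, applying $D\Phi(x)^\top$ gives exactly the claimed vector $(\bar u + \partial_1 \phi(x)\, \widetilde u,\; \partial_2 \phi(x)\, \widetilde u)^\top$, which closes the argument.

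The one point requiring care — and the only genuine obstacle — is verifying that the constraint qualification in \cite[Proposition 6.41]{roc} is met so that the preimage formula holds with equality rather than merely an inclusion. This is where the hypothesis \reff{eq: Hx} enters: the full rank of $D\Phi(x)$ guarantees the transversality condition that no nonzero normal vector to $\calD_1 \times \calD_2$ at $\Phi(x)$ lies in the kernel of $D\Phi(x)^\top$, which is the metric regularity condition underlying the chain rule. Since $D\Phi(x)$ is square and of full rank, it is invertible, so $D\Phi(x)^\top$ has trivial kernel and the qualification is automatic; I would state this explicitly to justify invoking the equality form of the formula. Everything else is routine matrix bookkeeping, so the proof is short.
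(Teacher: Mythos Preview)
Your proposal is correct and follows exactly the approach the paper indicates: the paper does not give a proof but simply cites \cite[Exercise 6.7 and Proposition 6.41]{roc} as providing the description of the normal cone under \reff{eq: Hx}, and your argument is precisely the unpacking of that citation---the preimage formula under the full-rank qualification, the product rule for normal cones, and the explicit Jacobian computation.
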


When $x$ lies in the interior of $\calD$, $\Nc^1_{\Dc}(x)={\{0\}}$ and  \eqref{ourcondcov}-\eqref{ourconddrift} are trivially verified. Hence, it suffices to control  $b$ and $C$ on the boundary of the domain in order to ensure the stochastic invariance of $\Dc$ as stated by the following proposition, in which we use the notations 
\begin{equation}\label{eq: notations example}
b=(\bar b,\widetilde b )^{\top}, \;C=(C_{ij})_{ij}\;\mbox{ and } \;\partial_u=u_2 \partial_1  - u_1 \partial_2.
\end{equation}  

\begin{proposition}\label{prop:condbordapplication} Let $\calD$ be as in \eqref{eq:Dexample} and $x=(\bar{x}, \widetilde{x}) \in \partial \calD$ be such that \reff{eq: Hx} holds. Fix  $u=(u_1,u_2)^\top \in \calN^1_{\calD}(x)$   as in Proposition \ref{prop:conenormalapplication}. Under the assumptions of Theorem \ref{MainTheorem},  \eqref{ourcondcov}-\eqref{ourconddrift} are equivalent to the following:
	\begin{enumerate}[(a)]
		\item Either  $\widetilde u \neq 0$ and 
		\begin{subnumcases}{}
		C(x)= C_{11}(x)   \left( \begin{array}{cc} 1 & -\frac{u_1 }{u_2 }     \\ -\frac{u_1 }{u_2 }  & \frac{u_1^2 }{u_2^2 }   \end{array} \right), \label{ourcondcovapplication}
		\\
		\langle u, b(x) \rangle -\frac{\mathds{1}_{\{ C_{11}(x) \neq 0\}}}{2(u_1^2 + u_2^2)}  \left[ u_1u_2\partial_u (C_{11}-C_{22})(x)  + (u_2^2 - u_1^2) \partial_uC_{12}(x) \right] \leq 0. \label{ourconddriftapplication}
		\end{subnumcases}
		\item Or,  $\widetilde u = 0$, $u_1=\bar u$ and  
		\begin{subnumcases}{}
		C(x)\mathds{1}_{\{\bar u\ne 0\}}= C_{22}(x)   \left( \begin{array}{cc} 0 & 0    \\ 0  & 1   \end{array} \right)\mathds{1}_{\{\bar u\ne 0\}}, \label{ourcondcovapplicationb}
		\\
		\bar u \left(\bar b(x)  -\frac{\mathds{1}_{\{ C_{22}(x) \neq 0\}}}{2} \partial_2 C_{12}(x)\right) \leq 0. \label{ourconddriftapplicationb}
		\end{subnumcases}
	\end{enumerate}
	
\end{proposition}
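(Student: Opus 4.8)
The plan is to specialize the general characterization \eqref{ourcondcov}-\eqref{ourconddrift} to the explicit normal cone computed in Proposition \ref{prop:conenormalapplication}, treating the two cases $\widetilde u \neq 0$ and $\widetilde u = 0$ separately since they correspond to geometrically different boundary directions. First I would fix $u=(u_1,u_2)^\top \in \calN^1_{\calD}(x)$ written in the parametrized form $u_1=\bar u + \partial_1\phi(x)\widetilde u$, $u_2=\partial_2\phi(x)\widetilde u$. The key structural observation is that condition \eqref{ourcondcov}, namely $C(x)u=0$, forces $u$ to span the kernel of the symmetric positive semi-definite matrix $C(x)$; since $C(x)\in \bbS^2_+$, its rank is $0$, $1$, or $2$, and $C(x)u=0$ with $u\neq 0$ rules out rank $2$. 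I would then translate $C(x)u=0$ into the explicit parametrizations \eqref{ourcondcovapplication} and \eqref{ourcondcovapplicationb}.

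For case (a) with $\widetilde u\neq 0$, we have $u_2=\partial_2\phi(x)\widetilde u\neq 0$ (using \eqref{eq: Hx}, which guarantees $\partial_2\phi(x)\neq 0$ so that $D\Phi$ has full rank). The equation $C(x)u=0$ then reads $C_{11}u_1 + C_{12}u_2=0$ and $C_{12}u_1+C_{22}u_2=0$. Solving the first gives $C_{12}=-C_{11}u_1/u_2$, and substituting into the second yields $C_{22}=C_{11}u_1^2/u_2^2$, which is exactly the rank-one form \eqref{ourcondcovapplication}. For the drift condition, I would plug this explicit $C(x)$ and its pseudoinverse projection into \eqref{ourconddrift}. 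The cleanest route is to use the projection reformulation from Remark \ref{rem : projection}: when $C_{11}(x)\neq 0$ the image of $C(x)$ is one-dimensional, spanned by the single normalized eigenvector $q_1=(u_2,-u_1)^\top/\sqrt{u_1^2+u_2^2}$ (orthogonal to $u$, since $u\in\Ker C(x)$), so $CC^+=q_1q_1^\top$ and $\tfrac12\sum_j DC^j(CC^+)^j = \tfrac12 D_{q_1}(Cq_1)(x)$. When $C_{11}(x)=0$ the whole matrix vanishes and the projection term drops out, which produces the indicator $\mathds{1}_{\{C_{11}(x)\neq 0\}}$. Computing $\langle u, D_{q_1}(Cq_1)(x)\rangle$ in coordinates, writing the directional derivative along $q_1\propto (u_2,-u_1)$ as $\partial_u=u_2\partial_1-u_1\partial_2$ per \eqref{eq: notations example}, and simplifying the resulting combination of $\partial_u C_{11}$, $\partial_u C_{22}$, $\partial_u C_{12}$ should reproduce the bracket in \eqref{ourconddriftapplication} with the normalizing factor $1/(u_1^2+u_2^2)$.

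For case (b) with $\widetilde u=0$, we get $u_2=0$ and $u_1=\bar u$, so $u$ points purely in the $\bar x$-direction; then $C(x)u=0$ reads $C_{11}\bar u=0$ and $C_{12}\bar u=0$, giving $C_{11}(x)=C_{12}(x)=0$ when $\bar u\neq 0$, i.e.\ the form \eqref{ourcondcovapplicationb}. Here the image of $C(x)$ (when $C_{22}(x)\neq 0$) is spanned by $q_1=(0,1)^\top=e_2$, so the projection term becomes $\tfrac12 D_{e_2}(Ce_2)(x)$ and $\langle u, \tfrac12 D_{e_2}(Ce_2)(x)\rangle = \tfrac{\bar u}{2}\partial_2 C_{12}(x)$, yielding \eqref{ourconddriftapplicationb} with its own indicator $\mathds{1}_{\{C_{22}(x)\neq 0\}}$.

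The main obstacle I anticipate is the case-splitting bookkeeping around the degeneracies: one must carefully verify that the indicator functions $\mathds{1}_{\{C_{11}(x)\neq 0\}}$ and $\mathds{1}_{\{C_{22}(x)\neq 0\}}$ correctly capture when the projection $CC^+$ is nonzero versus zero, and confirm that the two cases (a) and (b) are genuinely exhaustive and mutually consistent with the parametrization of $\calN^1_{\calD}(x)$. A subtle point is that a given boundary point may admit normals of both types (when both $\calD_1$ and $\calD_2$ are active), so I would check that the stated conditions must hold simultaneously for all such $u$, and that the equivalence between the coordinate-free \eqref{ourcondcov}-\eqref{ourconddrift} and the explicit forms is bidirectional. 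The algebraic simplification in case (a)—reorganizing $\langle u, D_{q_1}(Cq_1)\rangle$ into the symmetric combination $u_1u_2\partial_u(C_{11}-C_{22})+(u_2^2-u_1^2)\partial_u C_{12}$—is routine but error-prone, and is where I would spend the most care.
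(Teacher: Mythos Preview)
Your proposal is correct and follows essentially the same approach as the paper. The only cosmetic difference is that you compute the correction term via the directional-derivative reformulation of Remark~\ref{rem : projection} (using the eigenvector $q_1\propto (u_2,-u_1)^\top$), whereas the paper writes the projection directly as $C(x)C(x)^+ = I_2 - \|u\|^{-2}uu^\top$ and expands $\langle u,\sum_j DC^j(x)(CC^+)^j(x)\rangle$ in coordinates; both routes lead to the same algebra and the same indicator bookkeeping.
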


\begin{proof} {Case (a), $\widetilde u \neq 0$:} Since  $D  \Phi(x)$ has full rank, $ \partial_2 \phi (x)\neq 0$ and therefore $u_2 \neq 0$.
	 	Since $C(x) \in \bbS^2$, \eqref{ourcondcov} is clearly equivalent to \eqref{ourcondcovapplication}.

	If $C_{11}(x) \neq 0$, \eqref{ourcondcovapplication} implies that  $u= ( \bar{u} + \partial_1 \phi(x) \widetilde u    ,  \partial_2 \phi(x) \widetilde u )^\top$ spans the kernel of $C(x)$. Therefore, by Proposition \ref{proppseudoinverseproj},
	\begin{equation*}
	C(x)C(x)^+  = I_2 - \frac{1}{\| u\|^2} u u^\top = \frac{1}{u_1^2+u_2^2} \left( \begin{array}{cc} u_2^2 & -u_1u_2    \\  -u_1u_2 & u_1^2  \end{array} \right).
	\end{equation*}

	Straightforward computations yield  
	\begin{equation*}
	\langle u, \sum_{j=1}^{2} DC^j(x) (CC^+)^j(x)  \rangle = \frac{1}{u_1^2+u_2^2} \left[ u_1u_2 \partial_u (C_{11}-C_{22})(x) + (u_2^2 - u_1^2) \partial_uC_{12}(x) \right],
	\end{equation*}
	recall the notations introduced in \reff{eq: notations example}. This shows the equivalence between  \eqref{ourconddrift} and \eqref{ourconddriftapplication} when $C_{11}(x) \neq 0$. 
	
	If $C_{11}(x)=0$, then \eqref{ourcondcovapplication} implies that $C(x)C(x)^+=0$ and \eqref{ourconddrift} reads $\langle u, b(x) \rangle \leq 0$.
	\vs2
	
	Case (b), $\widetilde u = 0$: If $\bar u =0$, then $u=0$ and there is nothing to prove. Otherwise,  $u_1=\bar u \neq 0$.

	Since $C(x) \in \bbS^2$, \eqref{ourcondcov} is clearly equivalent to $C_{11}(x)=0$ and $C_{21}(x)=C_{12}(x)=0$, that is \eqref{ourcondcovapplicationb}. 
	  If $C_{22}(x) \neq 0$, then  \eqref{ourcondcovapplicationb} provides
	\begin{equation*}
	C(x)C(x)^+  = \left( \begin{array}{cc} 0 & 0    \\  0 & 1  \end{array} \right),
	\end{equation*} 
	and straightforward computations yield
	\begin{equation*}
	\langle u, \sum_{j=1}^{2} DC^j(x) (CC^+)^j(x)  \rangle =  \bar u \partial_2 C_{12}(x),
	\end{equation*}
	which shows the equivalence between  \eqref{ourconddrift} and \eqref{ourconddriftapplicationb} when $C_{22}(x) \neq 0$. 	
	If $C_{22}(x)=0$, then $C(x)C(x)^+=0$ and \eqref{ourconddrift} reads $\bar  u  \bar b(x) \leq 0$.
\end{proof}

 Note that $\bar u=0$ when $\calD_1=\bbR$, which will be the case from now on. In the sequel, we impose more structure on the coefficients, as it is usually done  in the construction of invariant diffusions. This permits to deduce an explicit form of $(b,C)$ on the whole domain from the   boundary conditions \eqref{ourcondcovapplication}-\eqref{ourconddriftapplication}. As already stated, Theorem \ref{MainTheorem} can be directly applied to a large class of diffusions,  e.g.~affine 
diffusions \cite{dfs,film,sp12} and  {polynomial diffusions} \cite{fla,lar}, not only for closed subsets of $\mathbb{R}^d$, but even when $\mathcal{D} \subset \mathbb{S}^d$ (as in \cite{cuchf}) since $\mathbb{S}^d$ can be identified with $\bbR^{\frac{d(d+1)}{2}}$ by using the half-vectorization operator. We start by defining these two main structures.

\begin{definition}[Affine and polynomial diffusions]\label{def:affinepolynomial}
	$X$ is  a polynomial diffusion on $\calD$ if: 
	\begin{enumerate}[(i)]
		\item
		There  exist $\bar{b}^i, \widetilde{b}^i \in \mathbb{R}$, $0 \leq i \leq 2$, and $A^i \in \mathbb{S}^2$,  $1 \leq i \leq 5$, such that $b:x\mapsto b(x):=(\bar{b}(x),\widetilde{b}(x)) \in \bbR^2$ and $C: x \mapsto C(x) \in \mathbb{S}^2$  have the following form: 
		\begin{eqnarray}
		\left\{
		\begin{array}{rl}
		\bar b(x)&=\bar{b}^0 + \bar{b}^1 \bar{x} + \bar{b}^2 \widetilde x,  \\
		\widetilde b(x)&=\widetilde{b}^0 + \widetilde{b}^1 \bar{x} + \widetilde{b}^2 \widetilde x,   \\
		C(x)&=A^0 + A^1 \bar{x} + A^2 \widetilde{x} + A^3 \bar x^2 + A^4 \bar x \widetilde x+ A^5 \widetilde x^2, 
		\end{array}
		\right. \label{eq:Cpolynomial} 
		\end{eqnarray} 
		for all $x=(\bar x, \widetilde x) \in \calD$.
		\item
		$C(x) \in \bbS^d_+$, for all $x \in \calD$. 
	\end{enumerate}
	When $A^i=0$  for all $3 \leq i \leq 5$, we say that $X$ is  an affine diffusion.    
\end{definition}

Then, it is clear that $b$ and $C$ are $\mathcal{C}^\infty$ and satisfy the linear growth conditions \eqref{growthconditions}. \vs2

In what follows, we highlight the interplay between the geometry/curvature of the boundary and the coefficients $b$ and $C$. The three explicit examples below characterize the invariance for flat, convex and concave boundaries.  

\begin{example}[Canonical state space]\label{ex:canonical}
	Fix $\calD_1=\bbR$, $\calD_2=\bbR_+$ and $\phi(\bar x, \widetilde x )= \widetilde x$. Then $\calD=\bbR \times \bbR_+$ and $\calN^1_{\calD}(x)=\{0\} \times \mathds{1}_{\{\tilde x=0\}} \bbR_-$. Hence, \eqref{ourcondcovapplication}-\eqref{ourconddriftapplication} are  equivalent to
	\begin{equation*}
	C( \bar x, 0)	= C_{11}(\bar x, 0)   \left( \begin{array}{cc} 1 &  0     \\  0 & 0 \end{array}\right) \quad \mbox{ and } \quad 
	\widetilde b(\bar x,0) -\frac{\mathds{1}_{\{ C_{11}(\bar x,0) \neq 0\}}}{2}  \partial_1 C_{12} (\bar x, 0) \geq 0, \quad \mbox{ for all } \bar x  \in \bbR.
	\end{equation*}
	If we now impose the structural condition \reff{eq:Cpolynomial}, then straightforward computations lead to the characterization in \cite{dfs} for affine diffusions. The case of polynomial diffusions can be treated similarly. 
\end{example}

\begin{example}[Parabolic convex state space]\label{ex:convex}
	Let us consider the following parabolic state space:
	\begin{equation*}
	\calD=\{(\bar x,\widetilde x) \in \bbR^2, \widetilde x \geq \bar{x}^2 \}.
	\end{equation*} 
	
	Then, with the previous notations,  $\calD_1=\bbR$, $\calD_2 = \bbR_+$ and  $ \phi(\bar x,\widetilde x) = \widetilde x - \bar{x}^2$. Therefore, the first order normal cone given by Proposition \ref{prop:conenormalapplication} reads
	\begin{equation*}
	\calN^1_{\calD}(x) =  \left( \begin{array}{c} -2 \bar x     \\  1 \end{array}\right) \bbR_-, \quad \mbox{for all } x=(\bar x, \bar{x}^2) \in \partial \calD.
	\end{equation*}
	
	Conditions \eqref{ourcondcovapplication}-\eqref{ourconddriftapplication} are therefore equivalent to 
	
	\begin{subnumcases}{}
	C(x)= C_{11}(x)   \left( \begin{array}{cc} 1 & 2\bar x     \\ 2\bar x  & 4\bar x^2   \end{array} \right), \label{eq:convexparaboliccov}
	\\
	\langle u, b(x) \rangle -\frac{\mathds{1}_{\{ C_{11}(x) \neq 0\}}}{2(1+4\bar x^2)}  \left[ -2 \bar x\partial_u (C_{11}-C_{22})(x)  + (1 - 4\bar x^2) \partial_uC_{12}(x) \right] \geq 0, \label{eq:convexparabolicdrift}
	\end{subnumcases}
	for all $\bar x \in \bbR$, $x=(\bar x, \bar x^2)$ and $u=(-2\bar x,1)^\top$.

	If we now impose an additional affine structure on the diffusion $X=(\bar X, \widetilde X)$, as in  Duffie \textit{et al.}~\cite[Section 12.2]{dfs}, we recover   the characterization given in Gourieroux and Sufana \cite[Proposition 2]{gou}.  
Indeed, Proposition \ref{prop:condbordapplication} says that $\calD$ is invariant if and only if there exists $\alpha \geq 0$ such that 	
	\begin{enumerate}[{\rm (a)}]
		\item 
		\begin{equation}\label{goucov}
		C(x)= \alpha \left( \begin{array}{cc} 1  & 2  \bar{x}  \\  2  \bar{x} & 4 \widetilde{x}  \end{array} \right){,} \;\;\mbox{for all $x=(\bar{x},\widetilde{x})\in \Dc$,}
		\end{equation}
		\item $\bar{b}^2=0$ and
		\begin{eqnarray}
		\left\{\begin{array}{l}
		\widetilde{b}^2 > 2 \bar{b}^1 \quad \mbox{ and }  \quad (\widetilde{b}^1 -2  \bar{b}^0)^2 \le 4(\widetilde{b}^2  -2 \bar{b}^1 )(\widetilde{b}^0 -\alpha ) \\
		\mbox{or}  \\
		\widetilde{b}^2 = 2 \bar{b}^1, \quad   \widetilde{b}^1= 2 \bar{b}^0  \quad \mbox{ and } \quad \widetilde{b}^0 \geq \alpha  .
		\end{array}\right. \label{goudrift1}
		\end{eqnarray}
	\end{enumerate}

Let us detail the computations: 	
	(a) The covariance matrix $C(x) \in \mathbb{S}^2_+$ is of the form \eqref{eq:convexparaboliccov} on the boundary. Since $C$ is affine in $(\bar{x},\bar{x}^2)$, then necessarily $C_{11}(x)$ is constant (or else $C_{22}(x)$ would have at least a polynomial dependence of order 3 in $\bar{x}$). Therefore, there exists $\alpha$ such that $C(x)$ has the  form  \eqref{goucov} at $x=(\bar x, \bar x^2)$, in which $\alpha\ge 0$ to ensure that  $C(0) \in \mathbb{S}^2_+$. Finally,  $C$ needs to have the same form \eqref{goucov} on  the whole state space $\Dc$, since it is affine.
	
	(b) We now derive the form of the drift vector $b(x)=(\bar{b}(x),\widetilde{b}(x)) \in \mathbb{R}^2$ by using  \eqref{eq:convexparabolicdrift}.
	 From \eqref{goucov}, elementary computations show that condition \eqref{eq:convexparabolicdrift} is equivalent to
	\begin{equation*}
	-2 \bar{b}^2  \bar{x}^3 + (\widetilde{b}^2  -2 \bar{b}^1 )\bar{x}^2  + (\widetilde{b}^1 -2  \bar{b}^0 )\bar{x} +  \widetilde{b}^0 -\alpha  \geq 0,\quad \mbox{for all } \bar{x} \in  \mathbb{R},
	\end{equation*}
	which is equivalent to \eqref{goudrift1}, when $\alpha >0$.
	If $\alpha=0$, the same conclusion holds. 
	
	Conversely, \eqref{goucov} clearly implies  \eqref{ourcondcov} and (ii) of Definition \ref{def:affinepolynomial} since $\det(C(x))=4\alpha(\widetilde{x}-\bar{x}^2)\geq 0 $ and  $\widetilde{x} \geq 0$ for all $(\bar x,\widetilde x) \in \mathcal{D}$. Moreover, \eqref{goudrift1} leads to \eqref{ourconddriftapplication} by the same  computations as above.

\end{example}

\begin{example}[Parabolic concave state space]\label{ex:concave}
	We now consider the epigraph of the concave function $\bar x \mapsto -\bar x^2$, 
	\begin{equation*}
	\calD=\{(\bar x,\widetilde x) \in \bbR^2, \widetilde x \geq - \bar{x}^2 \}.
	\end{equation*} 
	
	It follows that  $\calD_1=\bbR$, $\calD_2 = \bbR_+$, $ \phi(\bar x,\widetilde x) = \widetilde x + \bar{x}^2$ and  
	\begin{equation*}
	\calN^1_{\calD}(x) =  \left( \begin{array}{c} 2 \bar x     \\  1 \end{array}\right) \bbR_-, \quad \mbox{for all } x=(\bar x, -\bar{x}^2) \in \partial \calD,
	\end{equation*}
	from Proposition \ref{prop:conenormalapplication}. Hence, conditions \eqref{ourcondcovapplication}-\eqref{ourconddriftapplication} are now equivalent to 
	
	\begin{subnumcases}{}
	C(x)= C_{11}(x)   \left( \begin{array}{cc} 1 & -2\bar x     \\ -2\bar x  & 4\bar x^2   \end{array} \right), \label{eq:concaveparaboliccov}
	\\
	\langle u, b(x) \rangle -\frac{\mathds{1}_{\{ C_{11}(x) \neq 0\}}}{2(4\bar x^2+1)}  \left[ 2 \bar x\partial_u (C_{11}-C_{22})(x)  + (1 - 4\bar x^2) \partial_uC_{12}(x) \right] \geq 0, \label{eq:concaveparabolicdrift}
	\end{subnumcases}
	for all $\bar x \in \bbR$, $x=(\bar x, -\bar x^2)$ and $u=(2\bar x,1)^\top\in -\calN^1_{\calD}(x)$.
	
	Let us first note that the above shows that we can not construct an affine diffusion living in $\Dc$, that is not degenerate, unless it lives on the boundary only. Indeed, if $C$ is affine then $C_{11}=:\alpha$  has to be constant, because of \eqref{eq:concaveparaboliccov},  and   $C$ is of the form \eqref{eq:concaveparaboliccov} with $(-\widetilde x)$ in place of $\bar x^2$. Since $C(x) \in \bbS^2_+$, we must have $\alpha \geq 0$ and  $\det C(x)=-4 \alpha^2 (\widetilde x + \bar x^2) \geq 0$. Thus, $\alpha=0$ unless we restrict to points $(\bar x,\widetilde x)$ on the boundary.  If we do so, it is not difficult to derive a necessary and sufficient condition on the coefficients  from the identity $\widetilde X=-\bar X^{2}$. 
	\vs2
	
	We now impose a polynomial structure on the  diffusion $X=(\bar X, \widetilde X)$, such that $\bar X$ is affine on its own, i.e.~$\bar b$ and $C_{11}$ are of affine form and only depend on $\bar x$. 	
	This extends \cite[Example 5.2]{lar} and entirely characterizes the stochastic invariance of $\Dc$ with respect to this structure of diffusion.
	By Proposition \ref{prop:conenormalapplication},  $\calD$ is invariant if and only if there exist $\alpha , \beta \geq 0$, such that	
	\begin{enumerate}[{\rm (a)}]
		\item  
		\begin{equation}\label{gouconcavecov}
		C(x)=  \left( \begin{array}{cc} \alpha & -2  \alpha\bar{x}  \\  -2 \alpha \bar{x} & (4 \alpha + \beta) \bar x^2 + \beta \widetilde{x}  \end{array} \right){,} \;\;\mbox{for all $x=(\bar{x},\widetilde{x})\in \Dc$,}
		\end{equation}
		\item  $\bar{b}^2=0$ and
		\begin{eqnarray}
		\left\{\begin{array}{l}
		\widetilde{b}^2 < 2 \bar{b}^1 \quad \mbox{ and }  \quad (\widetilde{b}^1 +2  \bar{b}^0)^2 \le 4(-\widetilde{b}^2  +2 \bar{b}^1 )(\widetilde{b}^0 +\alpha ) \\
		\mbox{or}  \\
		\widetilde{b}^2 = 2 \bar{b}^1, \quad   \widetilde{b}^1= -2 \bar{b}^0  \quad \mbox{ and } \quad \widetilde{b}^0 \geq -\alpha  
		\end{array}\right. .\label{gouconcavedrift}
		\end{eqnarray}
	\end{enumerate}

{Let us do the computations explicitly:} 	
	(a) The covariance matrix $C(x) \in \mathbb{S}^2_+$ is of the form \eqref{eq:concaveparaboliccov} on the boundary. Therefore, $C_{11}(x) \geq 0$, for all $x \in \partial\calD$. Since $C_{11}$ is affine and only depends on $\bar{x} \in \bbR$, then necessarily $C_{11}$ is a non-negative constant on the whole space $\calD$. Therefore, there exists $\alpha \geq 0$ such that $C_{11}(.)=\alpha$ on $\calD$. Moreover, \eqref{eq:Cpolynomial} reads on the boundary
	\begin{equation*}
	C(x)=A^0 + A^1 \bar x + (A^3-A^2) \bar x^2 - A^4 \bar x^3 + A^5 \bar x^4, \quad \mbox{ for all } \bar x \in \bbR.
	\end{equation*} 
	Therefore, comparing with  \eqref{eq:concaveparaboliccov} leads to    $A^4=A^5=0$ and the existence of   $\beta,\beta'$ such that $C$ is of the form \begin{equation*}
		   \left( \begin{array}{cc} \alpha & -2  \alpha\bar{x}   \\  -2 \alpha \bar{x}  & 4 \alpha  \bar x^2   \end{array} \right)
		   +
		     \left( \begin{array}{cc} 0 &  \beta' \\  \beta'&   \beta   \end{array} \right)(\widetilde x +\bar x^{2})
		\end{equation*}
 on the whole space $\Dc$. We now use the fact that $C(\Dc) \subset \bbS^2_+$. In particular, taking $\bar x=0$ shows that we must have $\alpha\beta\widetilde x-(\beta')^{2}\widetilde x^{2}\ge 0$ for all $\widetilde x\ge 0$, so that $\beta'=0$.  Similarly,     $4 \alpha \bar x^2+ \beta(\widetilde x +   \bar x^2 ) \geq 0$ must hold for all $x \in \Dc$, which is equivalent to   $\beta \geq 0$.  
 	
	(b) We now derive the form of the drift vector $b(x)=(\bar{b}(x),\widetilde{b}(x)) \in \mathbb{R}^2$ by using  \eqref{eq:concaveparabolicdrift}.
	 Since $X$ is affine on its own, $\bar b^2 =0$. From \eqref{gouconcavecov}, elementary computations show that condition \eqref{eq:concaveparabolicdrift} is equivalent to
	\begin{equation*}
	(-\widetilde{b}^2  +2 \bar{b}^1 )\bar{x}^2  + (\widetilde{b}^1 +2  \bar{b}^0 )\bar{x} +  \widetilde{b}^0 +\alpha  \geq 0,\quad \mbox{for all } \bar{x} \in  \mathbb{R},
	\end{equation*}
	which is equivalent to \eqref{gouconcavedrift}, when $\alpha >0$.
	If $\alpha=0$, the same conclusion holds. 
	
	Conversely, \eqref{gouconcavecov}-\eqref{gouconcavedrift} show that $X$ is a polynomial diffusion such that $\bar X$ is affine on its own since $\det(C(x))=\alpha \beta (\widetilde{x}+\bar{x}^2) \geq 0 $ and  $4 \alpha \bar x^2  + \beta ( \widetilde x + \bar x^2)\geq 4\alpha\bar x^2 \geq 0 $ for all $(\bar x,\widetilde x) \in \mathcal{D}$.  \eqref{gouconcavecov} clearly implies  \eqref{ourcondcov}.  Moreover, \eqref{gouconcavedrift} leads to \eqref{ourconddriftapplication} by the same  computations as above.

\end{example}

We conclude with a final remark on the interplay between the local geometry of the boundary, the coefficients $b$ and $C$ and the structure of the diffusion. 

\begin{remark}\label{rem:geometrycoeff}
	\begin{enumerate}[(i)]
		\item 
		Curvaceous boundary and covariance matrix: the curvature of the boundary plays a crucial role in determining the covariance structure. In Example \ref{ex:canonical}, the canonical state space, which shows no curvature, imposes strict constraints on the covariance matrix. Whereas, for curved domains, as in Examples \ref{ex:convex}-\ref{ex:concave}, the first order normal cone is a more complicated object and induces a richer covariance structure on the boundary.  
		\item
		Convexity and  drift direction: Figure \ref{fig:inwardoutward} visualizes the direction of the drift $b(0)=(\bar b^0, \widetilde b^0)$ with respect to the convexity of the domain. When the domain is convex, as in Example \ref{ex:convex}, the drift is necessarily inward pointing since $\widetilde b^0 \geq \alpha$, with $\alpha \geq 0$ from \eqref{goudrift1}. However, when the domain is concave, as in Example \ref{ex:concave}, the drift could even be outward pointing. This follows from the fact that $\widetilde b^0 \geq -\alpha$, with $\alpha \geq 0$ in \eqref{gouconcavedrift}. 
	\end{enumerate}
\end{remark}

\begin{figure}[h!]
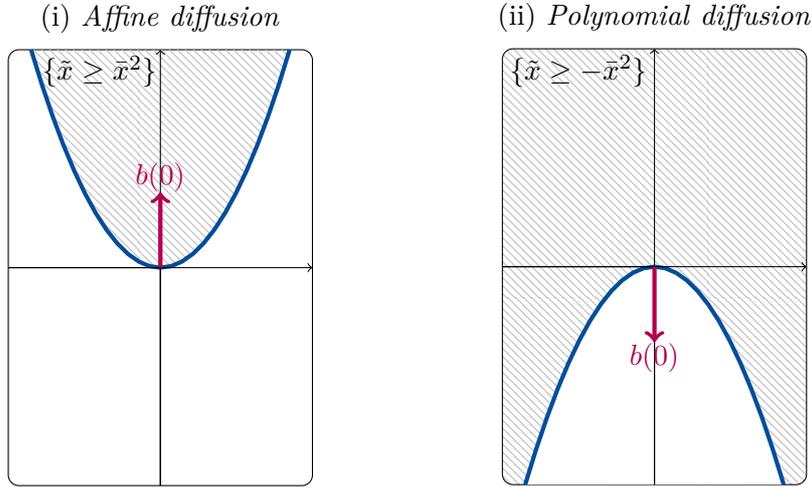

	\centering
	\begin{center}
		\inwardoutward
	\end{center}
	\rule{35em}{0.5pt}
	\caption[inwardoutward]{\textbf{Interplay between the convexity of the domain and the direction of the drift:} (i) Inward pointing drift for convex domains (Example \ref{ex:convex}). (ii)  Possible outward pointing drift for concave domains (Example \ref{ex:concave}).}
	\label{fig:inwardoutward}
\end{figure}

\section{Additional remark on the boundary non-attainment}\label{sectionboundarynon}
In this last section, we provide a sufficient condition for  the stochastic invariance of the interior of $\Dc$,  when  $\Dc$  has a  smooth boundary. The result is a direct implication of \cite[Proposition 3.5]{sp12} derived with the help of \textit{McKean's argument} (see \cite[ Section 4]{mayp}). Moreover, we extend the tractable conditions of \cite[Proposition 3.7]{sp12} given for  {affine diffusions}. Our result could be easily used in the context of {polynomial  diffusions} for instance.  

\begin{proposition}\label{propboundarynon}
Let $\calD\subset \mathbb{R}^d$ be closed with a non-empty interior $\mathring{\calD}$ that is a maximal connected subset of $\{x, \Phi(x) <0 \}$ where $\Phi \in \calC^2(\mathbb{R}^d,\mathbb{R})$ such that $\partial\calD=\Phi^{-1}(0)$. Assume that $b$ and $C$ are continuous and satisfy assumptions \eqref{growthconditions}-\eqref{eq: extension C}. Moreover, assume that $C \in \mathcal{C}^{1}(\R^{d},\mathbb{S}^d_{+})$. Then $\mathring{\calD}$ is stochastically invariant if there exists $v \in \mathbb{R}^d$ such that
		\begin{subnumcases}{}
		D  \Phi(x)  C(x)  = \Phi(x) v^\top \label{ourcondcovopen}
		\\
		\langle  D  \Phi(x)  , b(x)-\frac{1}{2} \sum_{j=1}^{d} D C^j(x)e_j   \rangle \leq 0  \label{ourconddriftopen}
		\end{subnumcases}
		for all $x \in \mathcal{D}$.  
\end{proposition}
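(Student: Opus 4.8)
The plan is to decompose the statement into two stages: (i) produce a weak solution that stays in the \emph{closed} set $\calD$, by reducing to Theorem \ref{MainTheorem}; and (ii) show, via a McKean-type argument (the content of \cite[Proposition 3.5]{sp12}), that such a solution started in $\mathring{\calD}$ never reaches $\partial\calD=\Phi^{-1}(0)$. Throughout I would work with the scalar process $Y_t=\Phi(X_t)$ and exploit that the covariance condition \eqref{ourcondcovopen} makes its diffusion coefficient vanish linearly in $\Phi$, since $D\Phi\,C\,D\Phi^\top=\Phi\langle D\Phi,v\rangle$.

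Stage (i). Since the boundary is smooth (so $D\Phi\neq0$ on $\partial\calD$), the first order normal cone is $\calN^1_\calD(x)=\bbR_+\,D\Phi(x)^\top$ at $x\in\partial\calD$ and $\{0\}$ in the interior, so it suffices to verify \eqref{ourcondcov}-\eqref{ourconddrift} with $u=D\Phi(x)^\top$ on $\partial\calD$. Setting $\Phi(x)=0$ in \eqref{ourcondcovopen} gives $C(x)u=0$, which is \eqref{ourcondcov}. For \eqref{ourconddrift} I would differentiate the identity $D\Phi\,C=\Phi v^\top$ — legitimate because $C\in\calC^1$ and $\Phi\in\calC^2$ — and contract the derivative with the kernel projection $I_d-CC^+$, using $C(I_d-CC^+)=0$ and $u\in\Ker C(x)$, to obtain $\langle u,\sum_j DC^je_j\rangle-\langle u,\sum_j DC^j(CC^+)^j\rangle=\langle D\Phi,v\rangle$. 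Hence \eqref{ourconddrift} reads $\langle D\Phi,\,b-\tfrac12\sum_j DC^je_j\rangle+\tfrac12\langle D\Phi,v\rangle\le0$; the first term is $\le0$ by \eqref{ourconddriftopen}, and the second because $\Phi\langle D\Phi,v\rangle=D\Phi\,C\,D\Phi^\top\ge0$ with $\Phi<0$ in the interior forces $\langle D\Phi,v\rangle\le0$, hence $\le0$ on $\partial\calD$ by continuity. Theorem \ref{MainTheorem} then delivers a weak solution $(X,W)$ with $X_t\in\calD$ for all $t$.

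Stage (ii). For the solution started at $x_0\in\mathring\calD$, set $\tau=\inf\{t:\Phi(X_t)=0\}$ and $\Psi:=-\log(-\Phi)$ on $\mathring\calD$. Using $D\Psi=-D\Phi/\Phi$, the covariance condition for the second order term ($\tfrac{1}{2\Phi^{2}}D\Phi\,C\,D\Phi^\top=\tfrac{1}{2\Phi}\langle D\Phi,v\rangle$), and the identity $\Tr[CD^2\Phi]+\langle D\Phi,\sum_j DC^je_j\rangle=\langle D\Phi,v\rangle$ (again from differentiating \eqref{ourcondcovopen} and summing), the $\langle D\Phi,v\rangle$-terms cancel and I would obtain
\[
\calL\Psi=-\frac{1}{\Phi}\Big\langle D\Phi,\;b-\tfrac12\sum_{j=1}^{d}DC^{j}e_{j}\Big\rangle\le0\qquad\text{on }\mathring\calD,
\]
the sign coming from $\Phi<0$ and \eqref{ourconddriftopen}. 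Thus $\Psi(X_{\cdot\wedge\tau})$ is a local supermartingale which would have to diverge to $+\infty$ along any path reaching $\partial\calD$ (where $\Phi\to0^-$). Localising with $\tau_R=\inf\{t:|X_t|\ge R\}$ and $\theta_n=\inf\{t:\Phi(X_t)\ge-1/n\}$, optional stopping gives $\E[\Psi(X_{T\wedge\theta_n\wedge\tau_R})]\le\Psi(x_0)$; comparing with the value $\log n$ carried on $\{\Phi(X_{\theta_n})=-1/n\}$ forces $\bbP(\tau\le T\wedge\tau_R)=0$, and letting $R\to\infty$ (no explosion, by \eqref{growthconditions}-\eqref{holdertrajectories}) and $T\to\infty$ yields $\tau=\infty$ a.s. This is exactly \cite[Proposition 3.5]{sp12} applied to $\Phi(X)$, so $X$ stays in $\mathring\calD$.

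The hard part will be twofold. First, every drift manipulation rests on differentiating the covariance condition $D\Phi\,C=\Phi v^\top$, which is precisely why the extra regularity $C\in\calC^1$ is imposed here and is what produces the cancellation making $\calL\Psi\le0$. Second, and more delicate, is the rigorous passage from $\calL\Psi\le0$ to non-attainment: since $\Phi$ need not be bounded, one cannot apply Fatou or optional stopping globally, and must first confine $X$ to a ball $\{|x|\le R\}$, where $\Phi$ and $D\Psi\sigma$ are bounded, and only then remove the localisation via the no-explosion estimate \eqref{holdertrajectories}. This localisation is the genuine core of McKean's argument, and it is what \cite[Proposition 3.5]{sp12} packages.
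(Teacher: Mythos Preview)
Your Stage~(ii) is precisely the paper's proof: differentiate \eqref{ourcondcovopen} to obtain $vD\Phi=CD^{2}\Phi+(I_{d}\otimes D\Phi)DC$, take the trace and combine with \eqref{ourconddriftopen} to reach $\langle D\Phi,b\rangle+\tfrac12\Tr[CD^{2}\Phi]\le \tfrac12\Phi^{-1}D\Phi\,C\,D\Phi^{\top}$ on $\mathring\calD$, then invoke \cite[Proposition~3.5]{sp12}. Your computation of $\calL\Psi$ for $\Psi=-\log(-\Phi)$ is this same inequality rewritten, and your localised supermartingale argument is exactly what \cite[Proposition~3.5]{sp12} packages; the cancellation of the $\langle D\Phi,v\rangle$-terms you emphasise is indeed the heart of the matter.

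Your Stage~(i), by contrast, is absent from the paper and is unnecessary. The McKean argument applies to \emph{any} weak solution started at $x_{0}\in\mathring\calD$---existence in $\R^{d}$ is already granted by the standing continuity and growth assumptions (see the discussion preceding \eqref{holdertrajectories})---and once it yields $\tau=\infty$, continuity of $t\mapsto\Phi(X_{t})$ together with $\Phi(X_{0})<0$ forces $X$ to remain in the connected component $\mathring\calD$ of $\{\Phi<0\}$. There is thus no need to first trap $X$ in the closed set $\calD$ via Theorem~\ref{MainTheorem}. Your verification of \eqref{ourcondcov}--\eqref{ourconddrift} on $\partial\calD$ is correct (the identity $\langle u,\sum_{j}DC^{j}e_{j}\rangle-\langle u,\sum_{j}DC^{j}(CC^{+})^{j}\rangle=\langle D\Phi,v\rangle$ holds, and $\langle D\Phi,v\rangle\le0$ there by continuity from the interior where $\Phi\langle D\Phi,v\rangle=D\Phi\,C\,D\Phi^{\top}\ge0$), but it is extra work that the direct route bypasses entirely.
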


\begin{proof}
	Fix $x \in \mathring{\calD}$. By differentiating \eqref{ourcondcovopen} with the help of Propositions \ref{propdifferentiationrules} and \ref{propappendixformulas}, we obtain 
	\begin{equation*}
	  v D  \Phi(x)=( C(x) \otimes I_{1})D^{2}\Phi(x)+ ( I_{d}\otimes D\Phi(x))D C(x)=C(x) D^{2}\Phi(x)+ ( I_{d}\otimes D\Phi(x))D C(x),
	\end{equation*} 
	which, combined  with   \eqref{ourconddriftopen}, leads to  
	\begin{eqnarray*}
	\langle D  \Phi(x),b(x) \rangle &\leq& \frac{1}{2} \Tr\left[ (I_d \otimes D  \Phi(x) ^\top ) D C(x) \right] \\
	 &=& -\frac{1}{2} \Tr\left[C(x)  D^2 \Phi(x)  \right] + \frac{1}{2}  D  \Phi(x) v\\
	 &=& -\frac{1}{2} \Tr\left[ C(x)  D^2 \Phi(x)  \right] + \frac{1}{2} \Phi(x)^{-1} D  \Phi(x) C(x)D  \Phi(x)^{\top}.
	\end{eqnarray*}
	We conclude by using \cite[Proposition 3.5]{sp12} (after a change of the sign, since  $\calD$ is assumed to be a connected subset of $\{x,\Phi(x)>0\}$  in \cite[Proposition 3.5]{sp12}).
\end{proof}

\begin{example}
\begin{enumerate}[{\rm (i)}]
\item \emph{Square root process:} Let us consider again the process defined by $dX_t=b(X_t)dt + \eta \sqrt{X_t}dW_t$, for some $\eta>0$, on $\calD=\bbR_+$. Then, $\Phi:x\mapsto-x$ and   \eqref{ourcondcovopen}-\eqref{ourconddriftopen} are equivalent to $v=\eta^2$ and $b(0) \geq \frac{\eta^2}{2}$. These are the well known conditions for the boundary non-attainment of the square-root process.
\item \emph{Affine diffusions:} More generally, let $\calD\subset\bbR^d$ satisfy  the assumptions of Proposition  \ref{propboundarynon} and take $C(x)=A^0+\sum_{j=1}^d A^jx^{j}$ for some $A^j \in \bbS^d$, $1 \leq j \leq d$. Then differentiating $C$ shows that condition \eqref{ourconddriftopen} is equivalent to $\langle  D  \Phi(x)  , b(x)-\frac{1}{2} \sum_{j=1}^{d} (A^j)^j  \rangle \leq 0$ yielding \cite[Proposition 3.7]{sp12}.
\item \emph{Jacobi diffusion:} Set $\calD=(0,1]$ and consider a {polynomial  diffusion} $X$ on $\cal{D}$, \textit{i.e.}~$b$ is affine  and $C$ is a polynomial  of degree two. Theorem \ref{MainTheorem} applied on $[0,1]$ immediately yields that de dynamics of $X$ must be of the form $dX_t=\kappa(\theta-X_t)dt+\eta \sqrt{X_t(1-X_t)}dW_t$ where $\kappa , \eta \geq 0$ and   $0\leq \theta \leq 1$. Now a localized version of Proposition  \ref{propboundarynon} shows that $\calD=(0,1]$ is stochastically invariant under the additional condition that $\kappa\theta \geq \frac{\eta^2}{2}$.
\end{enumerate}
\end{example}

Proposition \ref{propboundarynon} is important in practice since it gives, in many cases, the existence and the uniqueness of a global \textit{strong solution} to   \eqref{diffusionsdeinvariance}  as discussed in the following remark. 

\begin{remark}
Let $\calD$ be as in Proposition \ref{propboundarynon}. Assume that $C \in \mathcal{C}^2(\mathring{\Dc},\bbS^d_{+})$   and that $b$ is locally Lipschitz (which is clearly the case for affine and polynomial diffusions). By  \cite[Remark 1 page 131]{fri},  $\sigma=C^{\frac12}$ is locally Lipschitz on $\mathring{\calD}$. Therefore, when the boundary is never attained,   \eqref{diffusionsdeinvariance} starting from any element $x \in \mathring{\calD}$ admits a global \textit{strong solution} and pathwise-uniqueness holds.   
\end{remark}

\begin{appendices}

\section{Matrix tools}\label{AppendixMatrix}
 For the reader's convenience, we collect in this Appendix some definitions and properties of matrix tools intensively used in the proofs throughout the article. For a complete review and proofs we refer to \cite{mag80,mag88,neu}.
 \vs2
 
 We start by recalling the definition of the Moore-Penrose pseudoinverse which generalizes the concept of invertibility of square matrices, to non-singular and non-square matrices. In the following, we denote by $\M^{m,n}$ the collection of $m\times n$ matrices. 

\begin{definition}[Moore-Penrose pseudoinverse]\label{defpseudoinverse}
Fix $A\in \M^{m,n}$. The Moore-Penrose pseudoinverse of $A$ is the unique $n \times m $ matrix $A^+$ satisfying: $AA^+A=A$, $A^+AA^+=A^+$, $AA^{+}$ and $A^{+}A$ are Hermitian. 
\end{definition}

\begin{proposition}\label{proppseudoinversespectral}
If $A\in \M^{d}$ has the spectral decomposition $Q\Lambda Q^{\top}$ for some orthogonal matrix $Q\in \M^{d}$ and a diagonal matrix $\Lambda=\diag{(\lambda_{i})_{i\le d}}\in \M^{d}$. Then, $A^{+}= Q\Lambda^{+} Q^{\top}$ in which $\Lambda^{+}=\diag{(\lambda_{i}^{-1}\mathds{1}_{\{\lambda_{i}\ne 0\}})_{i\le d}}$, and $AA^{+}= Q \diag{(\mathds{1}_{\{\lambda_{i}\ne 0\}})_{i\le d}}Q^{\top}$. If moreover $A$ is positive semi-definite and $B=A^{\frac12}$, then $B^{+}=Q(\Lambda^{+})^{\frac12} Q^{\top}$. 
\end{proposition}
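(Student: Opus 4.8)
The plan is to establish all three identities by invoking the uniqueness of the Moore--Penrose pseudoinverse asserted in Definition \ref{defpseudoinverse}: once the four defining relations are verified for an explicit candidate, that candidate must coincide with the pseudoinverse. So the entire proof reduces to a verification against the Penrose axioms, with the orthogonality of $Q$ doing the bookkeeping.

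First I would dispense with the purely diagonal algebra as a self-contained observation. Writing $\Lambda^{+}=\diag{(\lambda_{i}^{-1}\mathds{1}_{\{\lambda_{i}\ne 0\}})_{i\le d}}$, the crucial entrywise fact is $\lambda_{i}\cdot\lambda_{i}^{-1}\mathds{1}_{\{\lambda_{i}\ne0\}}=\mathds{1}_{\{\lambda_{i}\ne0\}}$, from which $\Lambda\Lambda^{+}=\Lambda^{+}\Lambda=\diag{(\mathds{1}_{\{\lambda_{i}\ne 0\}})_{i\le d}}$, a diagonal $0$--$1$ matrix that is symmetric and idempotent. Consequently $\Lambda\Lambda^{+}\Lambda=\Lambda$ and $\Lambda^{+}\Lambda\Lambda^{+}=\Lambda^{+}$, again checked component by component.

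Next I would propagate these relations to the candidate $\tilde A:=Q\Lambda^{+}Q^{\top}$ using $Q^{\top}Q=QQ^{\top}=I_{d}$. The telescoping computations $A\tilde A A=Q\Lambda\Lambda^{+}\Lambda Q^{\top}=Q\Lambda Q^{\top}=A$ and $\tilde A A\tilde A=Q\Lambda^{+}\Lambda\Lambda^{+}Q^{\top}=\tilde A$ give the first two Penrose conditions, while $A\tilde A=Q(\Lambda\Lambda^{+})Q^{\top}$ and $\tilde A A=Q(\Lambda^{+}\Lambda)Q^{\top}$ are symmetric, being of the form $QDQ^{\top}$ with $D$ diagonal. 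By uniqueness this forces $A^{+}=\tilde A$, which is the first identity; the second identity $AA^{+}=Q\diag{(\mathds{1}_{\{\lambda_{i}\ne0\}})_{i\le d}}Q^{\top}$ then falls out directly from the expression for $A\tilde A$ already computed.

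Finally, for the positive semi-definite case I would note that $\lambda_{i}\ge0$ for all $i$, so that $B=A^{\frac12}=Q\Lambda^{\frac12}Q^{\top}$ with $\Lambda^{\frac12}=\diag{(\lambda_{i}^{\frac12})_{i\le d}}$ is itself an orthogonal spectral decomposition; applying the first identity to $B$ gives $B^{+}=Q(\Lambda^{\frac12})^{+}Q^{\top}$, and the claim follows from the entrywise identity $(\Lambda^{\frac12})^{+}=\diag{(\lambda_{i}^{-\frac12}\mathds{1}_{\{\lambda_{i}\ne0\}})_{i\le d}}=(\Lambda^{+})^{\frac12}$, valid because $\lambda_{i}^{\frac12}\ne0\iff\lambda_{i}\ne0$. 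I do not expect a genuine obstacle: the whole argument is an axiom check, and the only point requiring a modicum of care is keeping the indicator $\mathds{1}_{\{\lambda_{i}\ne0\}}$ consistent through the square-root step.
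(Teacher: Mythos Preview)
Your proof is correct. The paper itself does not supply a proof of this proposition: it is stated in the Appendix as a standard matrix fact, with the reader referred to \cite{mag80,mag88,neu} for details. Your direct verification of the four Penrose axioms for the candidate $Q\Lambda^{+}Q^{\top}$, together with the diagonal bookkeeping for $\Lambda^{+}$ and the reduction of the square-root case to the first identity, is exactly the standard argument one would expect and fills the gap cleanly.
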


\begin{proposition}\label{proppseudoinverseproj}
If $A\in \M^{m,n}$, then $AA^+$ is the orthogonal projection on the image of $A$. 
\end{proposition}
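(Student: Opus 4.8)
The plan is to verify that the matrix $P := AA^{+}$ enjoys the three properties that characterize the orthogonal projection onto $\mathrm{Im}(A)$: it is idempotent, it is Hermitian, and its range coincides with $\mathrm{Im}(A)$. Once these are in place, the proposition follows from the standard identification of a symmetric idempotent matrix with the orthogonal projector onto its range.

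First I would establish idempotency directly from the first defining relation of Definition \ref{defpseudoinverse}. Since $AA^{+}A=A$, one has
\[
P^{2}=(AA^{+})(AA^{+})=(AA^{+}A)A^{+}=AA^{+}=P.
\]
Hermitian-ness is immediate, as $AA^{+}$ is required to be Hermitian in Definition \ref{defpseudoinverse}, so $P^{\top}=P$ in the real setting considered throughout the paper.

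Next I would identify the range of $P$. The inclusion $\mathrm{Im}(P)\subseteq \mathrm{Im}(A)$ is clear, since $Py=A(A^{+}y)\in \mathrm{Im}(A)$ for every $y$. For the reverse inclusion I would use $AA^{+}A=A$ once more: any element of $\mathrm{Im}(A)$ has the form $Ax$, and $P(Ax)=(AA^{+}A)x=Ax$, whence $Ax\in \mathrm{Im}(P)$. Therefore $\mathrm{Im}(P)=\mathrm{Im}(A)$.

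Finally I would invoke the standard fact that a Hermitian idempotent matrix is exactly the orthogonal projector onto its range. If one wishes to spell this out, it can be checked in a line: for arbitrary $z,w$ the decomposition $z=Pz+(I-P)z$ satisfies $\langle Pz,(I-P)w\rangle = z^{\top}P^{\top}(I-P)w = z^{\top}(P-P^{2})w = 0$ by symmetry and idempotency, so the range and the kernel of $P$ are orthogonal complements and $P$ indeed acts as the orthogonal projection onto $\mathrm{Im}(P)=\mathrm{Im}(A)$. There is no genuine obstacle here; the only point requiring a moment of care is this last equivalence between ``symmetric idempotent'' and ``orthogonal projector,'' which is precisely what upgrades the purely algebraic identities above into the geometric statement of the proposition.
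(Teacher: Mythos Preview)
Your argument is correct. The paper itself does not supply a proof of this proposition: it is stated in the Appendix as a standard fact, with the reader referred to \cite{mag80,mag88,neu} for details. Your direct verification from the four defining relations of Definition~\ref{defpseudoinverse} --- idempotency from $AA^{+}A=A$, symmetry from the Hermitian requirement, and the range identification $\mathrm{Im}(AA^{+})=\mathrm{Im}(A)$ --- is exactly the elementary route one would take, and nothing is missing.
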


 We now collect some useful identities  on the Kronecker product.

\begin{definition}[Kronecker product]\label{def: kronecker}
Let $A=(a_{ij})_{i\le m_{1},j\le n_{1}}\in \M^{m_{1},n_{1}}$   and $B\in \M^{m_{2},n_{2}}$. The Kronecker product $(A\otimes B)$ is defined as the $m_1 m_2 \times n_1 n_2$ matrix  
\begin{equation*}
A\otimes B = \left( \begin{array}{ccc} a_{11} B & \cdots  & a_{1n_1} B \\ \vdots &  &  \vdots \\ a_{m_1 1} B & \cdots  & a_{m_1 n_1} B \end{array} \right).
\end{equation*}
\end{definition}

\begin{proposition}\label{propappendixformulas} Let $A$ and $B$ be as in Definition \ref{def: kronecker},   $C\in \M^{n_{1},n_{3}}$  and $D\in \M^{n_{2},n_{4}}$.  Then, 
\begin{eqnarray*}
&(A\otimes B)(C\otimes D )=(AC\otimes BD),&\\
& {A\otimes B}=A(I_{n_{1}} \otimes B) \mbox{ if $m_{2}=1$},&\\
& {A\otimes B}=B(A \otimes I_{n_{2}}) \mbox{ if $m_{1}=1$.}&\\
\end{eqnarray*}
\end{proposition}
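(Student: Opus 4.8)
The plan is to prove the first (mixed-product) identity directly from the block structure recorded in Definition~\ref{def: kronecker}, and then to deduce the second and third identities as immediate corollaries by inserting a trivial $1\times 1$ identity factor and reusing the first.

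For the first identity, I would write $A=(a_{ik})_{i\le m_1,\,k\le n_1}$ and $C=(c_{kj})_{k\le n_1,\,j\le n_3}$, and read off from Definition~\ref{def: kronecker} that the $(i,k)$ block of $A\otimes B$ is $a_{ik}B$ while the $(k,j)$ block of $C\otimes D$ is $c_{kj}D$. Performing the block matrix multiplication, the $(i,j)$ block of $(A\otimes B)(C\otimes D)$ is $\sum_{k=1}^{n_1}(a_{ik}B)(c_{kj}D)=\left(\sum_{k=1}^{n_1}a_{ik}c_{kj}\right)BD=(AC)_{ij}\,BD$, which is precisely the $(i,j)$ block of $AC\otimes BD$. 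The only compatibility one must check is that $B$ (with $n_2$ columns) multiplies $D$ (with $n_2$ rows), which holds by the stated dimensions $A\in\M^{m_1,n_1}$, $B\in\M^{m_2,n_2}$, $C\in\M^{n_1,n_3}$, $D\in\M^{n_2,n_4}$. This block computation is essentially the entire content of the proposition; the rest is formal.

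For the second identity, assume $m_2=1$, so $B\in\M^{1,n_2}$. I would view $A$ as $A\otimes I_1$ (equal to $A$, since $I_1$ is the scalar $1$) and invoke the first identity to get
\begin{equation*}
A(I_{n_1}\otimes B)=(A\otimes I_1)(I_{n_1}\otimes B)=(AI_{n_1})\otimes(I_1 B)=A\otimes B,
\end{equation*}
where $I_1 B=B$ because $B$ is a row vector. The third identity is the symmetric statement: assuming $m_1=1$, I would write $B=I_1\otimes B$ and again apply the first identity,
\begin{equation*}
B(A\otimes I_{n_2})=(I_1\otimes B)(A\otimes I_{n_2})=(I_1 A)\otimes(BI_{n_2})=A\otimes B,
\end{equation*}
using $I_1 A=A$ (valid since $A\in\M^{1,n_1}$) and $BI_{n_2}=B$.

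I do not expect a genuine obstacle, as these are standard Kronecker identities. The only delicate point is the bookkeeping in the mixed-product step: one must keep the four index ranges and the block dimensions consistent so that both the outer matrix product and the two inner products $AI_{n_1}$, $I_1B$ (resp.\ $I_1A$, $BI_{n_2}$) are well defined. Once the first identity is established, the remaining two reduce to a one-line application each.
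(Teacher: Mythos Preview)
Your argument is correct. The paper does not actually supply a proof of this proposition: it is stated in the appendix as a collection of standard Kronecker-product identities, with the blanket remark ``For a complete review and proofs we refer to \cite{mag80,mag88,neu}.'' Your block-multiplication derivation of the mixed-product rule and the subsequent one-line specializations using $A=A\otimes I_1$ (resp.\ $B=I_1\otimes B$) are exactly the standard arguments found in those references, so there is nothing to compare.
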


The following definitions extend the concept {of   Jacobian} matrix and show how to nicely stack the partial derivatives of a matrix-valued function $F: \M^{n, q} \mapsto \M^{m ,p}$  by using the vectorization operator (see \cite[Chapter 9]{mag88}).

\begin{definition}[Vectorization  operator]
Let $A\in \M^{m,n}$. The vectorization operator $\vecop$  transforms the matrix into a vector in $\mathbb{R}^{mn}$ by stacking all the columns of the matrix $A$ one underneath the other. 
\end{definition}

\begin{definition}[Jacobian matrix]\label{defjacobianmatrix}
Let $F$ be a differentiable map from $\M^{n,q}$ to $\M^{m,p}$. The Jacobian matrix $D F(X)$ of $F$ at $X$ is defined as the following $mp \times nq$ matrix:
\begin{equation*}
D F(X)=\frac{\partial \vecop(F(X)) }{\partial \vecop(X)^\top}. 
\end{equation*}
\end{definition}

\begin{proposition}[Product rule]\label{propdifferentiationrules}
 Let $G$ be a differentiable map from $\M^{n,q}$ to $\M^{m,p}$ and  $H$  be a differentiable map from $\M^{n,q}$ to $\M^{p,l}$. 
Then,   $D(GH) =(H^\top \otimes I_{m})DG +(I_{l}\otimes G  )D H $.
  \end{proposition}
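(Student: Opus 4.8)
The plan is to reduce the statement to the elementary entrywise product rule together with the two standard vectorization identities $\vecop(AB)=(B^{\top}\otimes I_{m})\vecop(A)$ and $\vecop(AB)=(I_{l}\otimes A)\vecop(B)$, valid for $A\in\M^{m,p}$ and $B\in\M^{p,l}$ (both being special cases of $\vecop(ABC)=(C^{\top}\otimes A)\vecop(B)$; see \cite{mag80,mag88,neu}). The content of Definition \ref{defjacobianmatrix} is that $DF(X)$ is the matrix of the total derivative of $F$ read through $\vecop$: for any increment $V\in\M^{n,q}$,
\begin{equation*}
\vecop(F(X+V))-\vecop(F(X))=DF(X)\,\vecop(V)+o(\|V\|).
\end{equation*}
So it suffices to compute the first-order increment of $GH$ and to read off the linear map acting on $\vecop(V)$, then invoke uniqueness of this linear map.

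First I would expand the increment of the product. Writing $\Delta G:=G(X+V)-G(X)$ and $\Delta H:=H(X+V)-H(X)$, grouping terms gives
\begin{equation*}
G(X+V)H(X+V)-G(X)H(X)=(\Delta G)\,H(X)+G(X)\,(\Delta H)+r(V),
\end{equation*}
where the remainder $r(V)=(\Delta G)(\Delta H)$ is $O(\|V\|^{2})=o(\|V\|)$ by differentiability of $G$ and $H$. Applying $\vecop$ and the two identities above, with left/right factors $(\Delta G,H(X))$ in the first term and $(G(X),\Delta H)$ in the second, yields
\begin{equation*}
\vecop\big((GH)(X+V)\big)-\vecop\big((GH)(X)\big)=(H(X)^{\top}\otimes I_{m})\vecop(\Delta G)+(I_{l}\otimes G(X))\vecop(\Delta H)+o(\|V\|).
\end{equation*}

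Next I would substitute the first-order expansions $\vecop(\Delta G)=DG(X)\vecop(V)+o(\|V\|)$ and $\vecop(\Delta H)=DH(X)\vecop(V)+o(\|V\|)$, noting that the fixed Kronecker factors $H(X)^{\top}\otimes I_{m}$ and $I_{l}\otimes G(X)$ are constant matrices and hence preserve the $o(\|V\|)$ estimates. This produces
\begin{equation*}
\vecop\big((GH)(X+V)\big)-\vecop\big((GH)(X)\big)=\big[(H(X)^{\top}\otimes I_{m})DG(X)+(I_{l}\otimes G(X))DH(X)\big]\vecop(V)+o(\|V\|),
\end{equation*}
and since the Jacobian is the unique linear map realizing such a first-order expansion, identifying the coefficient of $\vecop(V)$ delivers $D(GH)=(H^{\top}\otimes I_{m})DG+(I_{l}\otimes G)DH$.

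There is no analytic obstacle here; the proof is bookkeeping, and the two points that require care are purely structural. First, one must check that the sizes line up so that both summands are $ml\times nq$ matrices matching $D(GH)$: here $H^{\top}\otimes I_{m}\in\M^{ml,mp}$ multiplies $DG\in\M^{mp,nq}$, while $I_{l}\otimes G\in\M^{ml,lp}$ multiplies $DH\in\M^{pl,nq}$. Second, one must orient the two vectorization identities correctly, i.e.\ pad $(\Delta G)H$ as $I_{m}(\Delta G)H$ and $G(\Delta H)$ as $G(\Delta H)I_{l}$ before applying $\vecop(ABC)=(C^{\top}\otimes A)\vecop(B)$; this is exactly what breaks the symmetry and produces the asymmetric factors $H^{\top}\otimes I_{m}$ on $DG$ and $I_{l}\otimes G$ on $DH$ rather than the reverse.
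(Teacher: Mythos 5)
Your proof is correct: the paper itself gives no proof of this proposition (the appendix defers to \cite{mag80,mag88,neu}), and your argument --- expand the increment of $GH$, absorb the quadratic cross term $(\Delta G)(\Delta H)$ into $o(\|V\|)$, apply $\vecop(ABC)=(C^{\top}\otimes A)\vecop(B)$ with the correct padding, and identify the unique linear map --- is exactly the standard differential-calculus derivation found in those references, with the dimension bookkeeping done correctly. Nothing to add.
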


 \end{appendices}

\bibliographystyle{unsrtnat} 

\begin{thebibliography}{0}
\providecommand{\natexlab}[1]{#1}
\providecommand{\url}[1]{\texttt{#1}}
\expandafter\ifx\csname urlstyle\endcsname\relax
  \providecommand{\doi}[1]{doi: #1}\else
  \providecommand{\doi}{doi: \begingroup \urlstyle{rm}\Url}\fi

\end{thebibliography}


\begin{thebibliography}{99}

   \bibitem{aj16} \textsc{Abi Jaber, E. (2017).} \textit{Stochastic invariance of closed sets for jump-diffusions with non-Lipschitz coefficients.} Electronic Communications in Probability,  vol. 22.


		\bibitem{aubd} \textsc{Aubin, J. P., \& Doss, H.} (2003). \textit{Characterization of stochastic viability of any nonsmooth set involving its generalized contingent curvature.} Stochastic analysis and applications, 21(5), 955-981.


	  \bibitem{aubf} \textsc{Aubin, J. P., \& Frankowska, H.} (1990). \textit{Set-valued analysis.} Birkhäuser, Boston, Basel, Berlin.

    \bibitem{barg} \textsc{Bardi, M., \& Goatin, P.} (1999). \textit{Invariant sets for controlled degenerate diffusions: a viscosity
solutions approach.} In Stochastic analysis, control, optimization and applications (pp. 191-208). Birkhäuser Boston.
   
    \bibitem{barj} \textsc{Bardi, M., \& Jensen, R.} (2002). \textit{A geometric characterization of viable sets for controlled degenerate diffusions.} Set-Valued Analysis, 10(2-3), 129-141.

    \bibitem{ber78} \textsc{Bertsekas, D. P., \& Shreve, S. E.} (1978). \textit{Stochastic optimal control: The discrete time case} (Vol. 23, pp. 15-24). New York: Academic Press.
    
	\bibitem{bha} \textsc{Bhatia, R.} (1997). \textit{Matrix analysis.} Springer-Verlag, New York, Volume 169 of Graduate Texts in Mathematics.

      \bibitem{brud} \textsc{Bruder, B.} (2005). \textit{Super-replication of European options with a derivative asset under constrained finite variation strategies.} Preprint HAL http://hal. archivesouvertes. fr/hal-00012183/fr.


    \bibitem{buc} \textsc{Buckdahn, R., Quincampoix, M., Rainer, C., \& Teichmann, J.} (2010). \textit{Another proof for the equivalence between invariance of closed sets with respect to stochastic and deterministic systems.} Bulletin des sciences mathematiques, 134(2), 207-214.

 \bibitem{CFY05}  \textsc{Cheridito, P., Filipovi\'c, D., \& Yor, M.} (2005). \textit{Equivalent and absolutely continuous measure changes for jump-diffusion processes}. Annals of applied probability, 1713-1732.

    \bibitem{cst05a} \textsc{Cheridito, P., Soner, H. M., \& Touzi, N.} (2005). \textit{Small time path behavior of double stochastic integrals and applications to stochastic control.} The Annals of Applied Probability, 15(4), 2472-2495.

    \bibitem{cst05b} \textsc{Cheridito, P., Soner, H. M., \& Touzi, N.} (2005). \textit{The multi-dimensional super-replication problem under gamma constraints.} In Annales de l'IHP Analyse non linéaire (Vol. 22, No. 5, pp. 633-666).



    \bibitem{cuchf} \textsc{Cuchiero, C., Filipović, D., Mayerhofer, E., \& Teichmann, J.} (2011). \textit{Affine processes on positive semidefinite matrices.} The Annals of Applied Probability, 21(2), 397-463.
    
    	\bibitem{cuchkr} \textsc{Cuchiero, C., Keller-Ressel, M., \& Teichmann, J.} (2012). \textit{Polynomial processes and their applications to mathematical finance.} Finance and Stochastics, 16(4), 711-740.
    	
		
    \bibitem{da}  \textsc{Da Prato, G., \& Frankowska, H.} (2004). \textit{Invariance of stochastic control systems with deterministic arguments.} Journal of differential equations, 200(1), 18-52.

    \bibitem{daf1}  \textsc{Da Prato, G., \& Frankowska, H.} (2007). \textit{Stochastic viability of convex sets.} Journal of mathematical analysis and applications, 333(1), 151-163.




	
	\bibitem{del99} \textsc{Delbaen, F., \& Schachermayer, W.} (1999). \textit{A compactness principle for bounded sequences of martingales with applications.} In Seminar on stochastic analysis, random fields and applications (pp. 137-173). Birkhäuser Basel.
		


 \bibitem{dos}  \textsc{Doss, H.} (1977). \textit{Liens entre équations différentielles stochastiques et ordinaires.} In Annales de l'IHP Probabilités et statistiques (Vol. 13, No. 2, pp. 99-125).

\bibitem{dfs} \textsc{Duffie, D., Filipović, D., \& Schachermayer, W.} (2003). \textit{Affine processes and applications in finance.} Annals of applied probability, 984-1053.


	\bibitem{eth}\textsc{Ethier, S. N., \& Kurtz, T. G.} (1986). \textit{Markov Processes: Characterization and Convergence}. Wiley, New York.


    \bibitem{fla}   {\textsc{Filipović, D., \& Larsson, M.} (2016). \textit{Polynomial diffusions and applications in finance.} Finance and Stochastics, to appear.}



	\bibitem{film} \textsc{Filipović, D., \& Mayerhofer, E.} (2009). \textit{Affine diffusion processes: theory and applications.} Advanced Financial Modelling, 8, 125.


	\bibitem{fri} \textsc{Friedman, A.} (1976). \textit{Stochastic differential equations and applications} (Vol. 2). Academic press.


    \bibitem{gou} \textsc{Gourieroux, C., \& Sufana, R.} (2006). \textit{A Classification of Two-Factor Affine Diffusion Term Structure Models.} Journal of Financial Econometrics, 4(1), 31-52.

    \bibitem{ike}  \textsc{Ikeda, N., \& Watanabe, S.} (1981). \textit{Stochastic differential equations and diffusion processes.} North-Holland Publishing Company, Amsterdam.


	

	\bibitem{kur} \textsc{Kurtz, T. G.} (2001). \textit{Lectures on Stochastic Analysis.} Department of Mathematics and Statistics, University of Wisconsin, Madison, WI, 53706-1388.

    \bibitem{lar}  \textsc{Larsson, M., and Pulido, S.} (2015). \textit{Polynomial diffusions on compact quadric sets.} Stochastic Processes and their Applications, to appear. 
    		
    \bibitem{lip01}  \textsc{Liptser, R., \& Shiryaev, A. N.} (2001). \textit{Statistics of random Processes: I. general Theory} (Vol. 2). Springer Science \& Business Media.
		
	\bibitem{mag85} \textsc{Magnus, J. R.} (1985). \textit{On differentiating eigenvalues and eigenvectors.} Econometric Theory, 1(02), 179-191.
	
    \bibitem{mag80} \textsc{Magnus, J. R., \& Neudecker, H.} (1980). \textit{The elimination matrix: some lemmas and applications.} SIAM Journal on Algebraic Discrete Methods, 1(4), 422-449.

    \bibitem{mag88} \textsc{Magnus, J. R., \& Neudecker, H.} (1988). \textit{Matrix differential calculus with applications in statistics and econometrics.} Wiley series in probability and mathematical statistics Show all parts in this series.

\bibitem{mayp} \textsc{Mayerhofer, E., Pfaffel, O., \& Stelzer, R.} (2011). \textit{On strong solutions for positive definite jump diffusions.} Stochastic processes and their applications, 121(9), 2072-2086.

    \bibitem{mij} \textsc{Mijatović, A., \& Urusov, M.} (2014). \textit{On the loss of the semimartingale property at the hitting time of a level.} Journal of Theoretical Probability, 1-31.

    \bibitem{neu} \textsc{Neudecker, H.} (1986). \textit{Symmetry, 0-1 matrices and jacobians.} Econometric Theory, 2, 157-190.

    \bibitem{rev} \textsc{Revuz, D., \& Yor, M.} (1999). \textit{Continuous martingales and Brownian motion}. Third edition. Springer, Berlin.
    
	\bibitem{roc} \textsc{Rockafellar, R. T., \& Wets, R. J. B.} (1998). \textit{Variational analysis} (Vol. 317). Springer-Verlag Berlin.
    


    \bibitem{sp12} \textsc{Spreij, P., \& Veerman, E.} (2012). \textit{Affine diffusions with non-canonical state space.} Stochastic Analysis and Applications, 30(4), 605-641.


		\bibitem{SV72} \textsc{Stroock, D. W., \& Varadhan, S. R.} (1972). On the support of diffusion processes with applications to the strong maximum principle. In Proceedings of the Sixth Berkeley Symposium on Mathematical Statistics and Probability (Univ. California, Berkeley, Calif., 1970/1971) (Vol. 3, pp. 333-359).


	\bibitem{str} \textsc{Stroock, D. W., \& Varadhan, S. S.} (1979). \textit{Multidimensional diffusion processes.} Springer.
	
	
	\bibitem{tap} \textsc{Tappe, S.} (2009). \textit{Stochastic invariance of closed, convex sets with respect to jump-diffusions.} Preprint.
    
\end{thebibliography}

\end{document}